\renewcommand{\ell}{{l}}  
\newcommand{\R}{{\mathbb{R}}}
\newcommand{\Z}{{\mathbb Z}}
\newcommand{\C}{{\mathbb C}}
\renewcommand{\P}{{\mathbb P}}
\renewcommand{\S}{{\mathbb S}}
\newcommand{\GG}{{\mathcal G}}
\newcommand{\NN}{{\mathcal N}}
\newcommand{\QQ}{{\mathcal Q}}
\newcommand{\tOm}{{\tilde \Om}}
\newcommand{\tS}{{\tilde S}}
\renewcommand{\Re}{\mathop{\rm Re}\nolimits}
\renewcommand{\Im}{\mathop{\rm Im}\nolimits}
\newcommand{\lf}{L^\infty}
\renewcommand{\div}{\operatorname{div}}
\theoremstyle{plain}
\newtheorem{thm}{Theorem}[section]
\newtheorem{prop}[thm]{Proposition}
\newtheorem{cor}[thm]{Corollary}
\newtheorem{lemma}[thm]{Lemma}
\theoremstyle{definition}
\newtheorem{rem}{Remark}[thm]
\numberwithin{equation}{section}
\newcommand{\secref}[1]{Section~\ref{#1}}
\def\squarebox#1{\hbox to #1{\hfill\vbox to #1{\vfill}}}
\newcommand{\<}{\langle}
\renewcommand{\>}{\rangle}
\renewcommand{\d}{\partial}
\newcommand{\ep}{\epsilon}
\newcommand{\lV}{\lVert}
\newcommand{\rV}{\rVert}
\def\ga{\gamma}
\def\de{\delta}
\def\De{\Delta}
\def\ep{\epsilon}
\def\la{\lambda}
\def\La{\Lambda}
\def\om{\omega}
\def\Om{\Omega}
\def\nab{\nabla}
\def\al{\alpha}
\def\les{\lesssim}
\def\c{\cdot}
\title[Magnetoelasticity]
{Asymptotic behavior of 3-D evolutionary model of Magnetoelasticity for small data}
 \author[X. Hao]{Xiaonan Hao$^1$}
 \author[J. Huang]{Jiaxi Huang$^2$}
 \author[N. Jiang]{Ning Jiang$^{3}$}
\author[L. Zhao]{Lifeng Zhao$^{4}$}
\address{$^1$School of Mathematics and Physics, University of Science and Technology Beijing,
	\newline\indent
	Beijing
	100083, P.R. China}
\email{\href{mailto:xn\_hao@163.com}{xn\_hao@163.com}}
\address{$^2$School of Mathematics and Statistics, Beijing Institute of Technology,
		\newline\indent
				Beijing
100081, P.R. China}
\email{\href{mailto:jiaxih@bit.edu.cn}{jiaxih@bit.edu.cn}}
\address{$^3$School of Mathemtics and Statistics, Wuhan University,
	\newline\indent
	Wuhan 430072, P. R. China}
\email{\href{mailto:njiang@whu.edu.cn}{njiang@whu.edu.cn}}
\address{$^4$School of Mathemtical Sciences, University of Science and Technology of China, 
	\newline\indent
	Hefei 230026, P. R. China}
\email{\href{mailto:zhaolf@ustc.edu.cn}{zhaolf@ustc.edu.cn}}
\subjclass[2020]{35A01; 35B40; 35Q35; 35Q40}
\keywords{Magnetoelasticity, Schr\"odinger flow, scattering, global regularity, small data}
\begin{document}
		
\begin{abstract}
In this article, we consider the evolutionary model for magnetoelasticity with vanishing external magnetic field and vanishing viscosity/damping, which is a nonlinear dispersive system. The global regularity and scattering of the evolutionary model for magnetoelasticity under small size of initial data is proved. Our proof relies on the idea of vector-field method due to the quasilinearity and the presence of convective term. 
A key observation is that we construct a suitable energy functional including the mass quantity, which enable us to provide a good decay estimates for Schr\"odinger flow.
In particular, we establish the asymptotic behavior in both mass and energy spaces for Schr\"odinger map, not only for gauged equation.
\end{abstract}

\date{\today}
\maketitle

		
\setcounter{tocdepth}{1}

\section{Introduction}
\subsection{Modeling of magnetoelasticity}
In the article we focus on the magnetoelastic materials, which respond mechanically to applied magnetic fields and react with a change of magnetization to mechanical stresses. These materials are so-called smart materials. Because of their remarkable response to external stimuli, they are attractive not only from the point of
view of mathematical modeling but also for applications. Magnetoelastic materials
are, among others, used in sensors to measure force or torque (cf., e.g., \cite{BS02,BS04,GRRC}) as
well as magnetic actuators (cf., e.g., \cite{SNR}) or generators for ultrasonic sound (cf., e.g., \cite{BV}).

The discovery of magnetoelasticity dates back at least to the 19th century (see \cite{Bro66}). Until the 1960s, in Brown’s monograph \cite{Bro66}, the first rigorous phenomenological theory of magnetoelasticity was built, using both Lagrangian and Eulerian coordinate systems
in the description. Tiersten also presented an essentially equivalent theory for magnetoelastic solids in \cite{Ti1,Ti2}.

Recently, in \cite{BFLS,Fo}, an energetic variational approach was taken to formulate the fully nonlinear problem
of magnetoelasticity completely in Eulerian coordinates in the current configuration. A transport equation for the deformation gradient is found to allow one to obtain the deformation gradient in the current configuration from the velocity gradient. Consequently, the major obstacle from the point view of elasticity, the invertibility of the deformation
is resolved. As for the magnetic part, the evolution of magnetization is modeled by Landau-Lifshitz-Gilbert (LLG) equation with the time derivative replaced the convective one, which is in order
to take into account that changes of the magnetization also occur due to transport by underlying
viscoelastic material. By this approach, the evolution of magnetoelasticity is modeled by the following system. Precisely, the evolutionary model of magnetoelasticity consists of the following equations of the
velocity field $u(t,x)\in \R^d$, the deformation gradient $F(t,x)\in \R^d\times \R^d$ and the magnetization $\phi(t,x) \in \S^{2}$ on the domain $(t,x)\in \R^+\times \R^d$:
\begin{equation}\label{PHLC}
	\begin{aligned}
		\left\{ \begin{aligned}
			&\d_t u+u\cdot \nab u+\nab p+\nab\cdot(2a\nab \phi\odot\nab \phi-W'(F)F^{T})-\nu \De u=\mu_0(\nab H_{ext})^T \phi\,, \\
                &\div u=0\,,\\
			&\d_t F+u\cdot\nab F-\kappa \De F=\nab u F\,,\\
			&\d_t \phi+u\cdot\nab \phi = -\ga \phi\times (2a\De \phi+\mu_0 H_{ext})-\la \phi\times [\phi\times (2a\De \phi+\mu_0 H_{ext})]\,.
		\end{aligned}\right.
	\end{aligned}
\end{equation}
In the system, the first equation of the bulk velocity $u(t,x)\in\R^d$ is the balance of
momentum in Eulerian coordinates with stress tensor $\mathcal T=-p I+\nu (\nab u+(\nab u)^T)+ W'(F)F^T-2a\nab\phi\odot \nab\phi$, where $p(t,x)\in \R$ is the hydrodynamic pressure, $(\nab\phi \odot\nab\phi)_{ij}=\d_i \phi\cdot \d_j\phi$ and $W(F)$ is the elastic energy. The matrix $F(t,x)=(F_{ij}(t,x))_{1\leq i,j\leq d}\in \R^d\times \R^d$ represents the deformation gradient of flow map $x(t,X)$ with respect to the initial position $X$, i.e. in the Lagrangian coordinates $\bar{F}_{ij}(t,X)=\frac{\d x^i(t,X)}{\d X^j}$, where $F_{ij}(t,x)$ is the entries of the $i$-th row and the $j$-th column of $F(t, x)$. In the Eulerian coordinates, the deformation gradient evolves along the third equation in \eqref{PHLC}.
The last equation of \eqref{PHLC} is the Landau-Lifshitz-Gilbert equation with the effective magnetic field $2a\De \phi + \mu_0 H_{ext}$, where
$\phi(t, x) \in \S^2$ stands for the magnetization and $H_{ext}(t,x)\in \R^d$ denotes the given external magnetic field.
The constant $\nu\geq 0$ is the viscosity coefficient of the fluid.
$\gamma>0$ is the electron gyromagnetic ratio,
$\lambda\geq 0$ is a phenomenological damping parameter, $a,\mu_0>0$ are the parameters coming from the Helmholtz free energy. The constraint $\div u=0$ is the incompressibility condition.

The system \eqref{PHLC} can be viewed as a nonlinear coupling of hydrodynamics of viscoelasticity and Landau-Lifshitz-Gilbert equation, each of which has been extensively studied in the past two decades. The $u(t,x)$ is velocity of fluid satisfying Euler (or Navier-Stokes) equations, and the $F(t,x)$ is the deformation gradient tensor with respect to the velocity field
$u(t,x)$. Here the above two equations can be seen as a (parabolic)-hyperbolic system.
When the magnetization $\phi$ of system \eqref{PHLC} is a constant unit vector, the system \eqref{PHLC} reduces to incompressible (visco)elastodynamics. Lin-Liu-Zhang \cite{LLZ-CPAM2005} and Chen-Zhang \cite{CZ-2006} studied the the global existence of incompressible elastodynamics for small data. Sideris-Thomases \cite{ST-2005,ST-2007} and Lei \cite{Lei2016} investigated the corresponding inviscid case. In a different way, Wang \cite{Wang-2017} established the global existence and the asymptotic behavior for the 2D incompressible isotropic elastodynamics for sufficiently small and smooth initial data in the Eulerian coordinates formulation.
Subsequently, Cai-Lei-Lin-Masmoudi \cite{CLLM-CPAM2019} showed  the vanishing viscosity limit for incompressible viscoelasticity.
For the more complete researches in this direction, one please refers to \cite{Lin-CPAM2012}.

The model \eqref{PHLC} is reduced to the well-known Landau-Lifshitz-Gilbert (LLG) equations of maps from $\R^d$ into $\S^2$ if we set $(u,F)\equiv (0,I)$, which is an important model known as the ferromagnetic chain system. 
Here we are interested in the special case, i.e. LLG without damping, called Landau-Lifshitz equation.
The Landau-Lifshitz equation is a type of Schr\"odinger flow to the unit sphere, which is a Hamiltonian flow with the Hamiltonian function $E(\phi)=\|\nab\phi\|_{L^2}^2$, see \cite{DW98}.
The local well-posedness theory of Schr\"odinger map flows was established by Sulem-Sulem-Bardos \cite{SSB}, Ding-Wang \cite{DW} and McGahagan \cite{M} on the domain $\R^d$, and by Chen-Wang \cite{ChenWang23} for the Neumann boundary value problem on smooth bounded domain.  The global well-posedness theory was started by Chang-Shatah-Uhlenbeck \cite{CSU} and Nahmod-Stefanov-Uhlenbeck \cite{NSU}.
For $\S^2$ target, the first global well-posedness result for Schr\"odinger flows in critical Besov spaces in $d\geq 3$ was proved by Ionescu-Kenig
\cite{IK} and Bejenaru \cite{B} independently. This was later improved to global regularity for small data in the critical Sobolev spaces in dimensions $d\geq 4$ in \cite{BeIoKe} and in dimensions $d\geq 2$ in \cite{BIKT}.
However, the question of small data global well-posedness in critical Sobolev spaces for general compact K\"ahler targets was more complicated, which was raised by Tataru in the survey report \cite{KTV}. Recently, Li \cite{Li0,Li} solved this problem using a novel bootstrap-iteration scheme to reduce the gauged equation to an approximate constant curvature system in finite times of iteration, and established the scattering for Schr\"odinger map in energy space. We can refer to \cite{KTV} for a more detailed review.

However, for the coupling \eqref{PHLC}, the research on the well-posedness is very few. In \cite{BFLS} and \cite{KKS}, the regularized transport equation version was considered, i.e. the system \eqref{PHLC} in the non-physical case $\kappa>0$. Bene\u sov\'a-Forster-Liu-Schl\"omerkempe \cite{BFLS} established the global in time weak solution by Galerkin method and a fixed point argument. Kalousek-Kortum-Schl\"omerkemper \cite{KKS} possessed global in time weak solutions, local-in-time existence of strong solutions and the weak-strong
uniqueness property.
Recently, Jiang-Liu-Luo \cite{JLL} considered the system \eqref{PHLC} in the physical case $\kappa=0$, $\nu,\ga,\la>0$ and elastic energy $W(F)=\frac{1}{2}|F|^2$. They proved the local-in-time existence of the evolutionary model for magnetoelasticity with finite initial energy by employing the nonlinear iterative approach.
They further reformulated the system near the constant equilibrium for magnetoelasticity with vanishing external magnetic field $H_{ext}$ and dissipative terms, then proved the global well-posedness for small initial data.

From the above analytical results, we know that the viscosity $\nu,\kappa>0$ and damping $\lambda>0$ play crucial roles in the well-posedness for evolutionary model of magnetoelasticity. The positivity of these coefficients bring some strong damping effect, which dominates the dispersion and wave features of the system.  There are some special but still important cases where the viscosity and damping effects are very weak. So physically these effects are simply neglected, i.e. the corresponding coefficients are set as $0$, i.e.  $\nu=0,\ \kappa=0,\  \lambda=0$. However, in this setting, the system \eqref{PHLC} has been even changed the types of equations. More specifically, it would be reduced to a nonlinear dispersive equations, 
where we need more ideas and observations compared with \cite{JLL}.

\subsection{The main results}
Our objective in this paper is to establish the global in time well-posedness and scattering for solutions to the evolutionary model of magnetoelasticity without viscosity, damping and external magnetic field for small initial data. A key observation is that
constructing an effective energy functional is a delicate matter, which a-priori requires a good frame on the vector bundle. This is done in the next section, where we fix the gauge and write
the equation as a Schr\"odinger evolution in a good gauge.

Since we are interested in the model of magnetoelasticity without viscosity, damping and external magnetic field, thus we set
\begin{equation*}
    \nu=0\,,\quad \lambda=0\,,\quad H_{ext}=0\,.
\end{equation*}
Because the constants are irrelevant for mathematical analysis, we can also set
\[a=1/2\,,\quad \gamma=1\,.\]
Here we take the elastic energy $W(F) = \frac{1}{2}|F|^2 $.

To analyze the long time behaviors of \eqref{PHLC}, we need more constraints with respect to $F$. Given a family of deformation $x(t,X)$, the deformation gradient is defined as $F=\frac{\d x(t,X)}{\d X}\big|_{X=X(t,x)}$. At initial time $t=0$, it is easy to see that $F(0,x)=I$, hence $\nab\cdot F^T(0,x)=\sum_j\d_j F_{jk}(0,x)=0$. Moreover, it can be shown that $\nab\cdot F^T$ satisfies a transport equation, see \cite[Page 237]{LW}. Hence,  the following relation will hold for all the time, and can be imposed on the system \eqref{PHLC}
\begin{align*}
    (\nab\cdot F^T)_k=\sum_j\d_j F_{jk}=0\,, \qquad k=1,2,3\,.
\end{align*}
Since the deformation gradient is expressed as $\bar{F}_{ij}(t,X)=\frac{\d x^i(t,X)}{\d X^j}$ in the Lagrangian coordiantes, then we obtain $\d_{X^k}\bar{F}_{ij}(t,X)=\d_{X^j}\bar{F}_{ik}(t,X)$, which is also read as the constraint as follows, in the Eulerian coordinates,
\begin{align*}
    \sum_{m}F_{mj}\d_m F_{ik}=\sum_m F_{mk}\d_m F_{ij}\,,\qquad  i,j,k=1,2,3\,.
\end{align*}
Lastly, the constraint $|\phi|=1$ comes from the geometric constraint $\phi\in \S^2$.

Therefore, the system that we are going to consider in this article is as follows
\begin{equation}        \label{ori_sys}
	\left\{
	\begin{aligned}
		&\d_t u+u\cdot \nab u+\nab p=\nab\cdot(FF^T-\nab \phi\odot\nab \phi)\,,\\
		&\d_t F+u\cdot\nab F=\nab u F\,,\\
		&\d_t \phi+u\cdot \nab \phi= -\phi\times \De \phi\,,\\
		&(u,F,\phi)\big|_{t=0}=(u_0,F_0,\phi_0)\,,
	\end{aligned}
	\right.
\end{equation}
on $\R^+\times \R^d$ with the constraints
\begin{align} \label{constraints1-re}
	&\div u=0\,,\quad \nab\cdot F^T=0\,,\quad
	\sum_m F_{mj}\d_m F_{ik}=\sum_m F_{mk}\d_m F_{ij}\,,\quad |\phi|=1\,.
\end{align}
The above evolutionary model of magnetoelasticity admits two conservation laws for solutions. One of the conservaltion laws of \eqref{ori_sys} is conservation of energy defined by
\begin{equation}    \label{EConsLaw}
	E(t)=\frac{1}{2}\int_{\R^d} |u|^2+\sum_{i,j}F^2_{ij}+\sum_k |\d_k \phi|^2\  dx\,.
\end{equation}
If the Schr\"odinger map satisfies $\|\phi_0-Q\|_{L^2_x}<\infty$ at initial time for some $Q\in \S^2$, the Schr\"odinger map flow has mass as another conserved quantity:
\begin{equation}   \label{MConsLaw}
	M(t)=\frac{1}{2}\int_{\R^d} |\phi-Q|^2\ dx\,.
\end{equation}

The local well-posedness of \eqref{ori_sys}-\eqref{constraints1-re} for large initial data is proved in \cite{HJJ} by approximation of perturbed parabolic system and parallel transport method.

\begin{thm} [Local well-posedness]      \label{main-Thm}
Let $d=2,3$ be the dimensions, $Q\in \S^2$ be a fixed unit vector, and let initial data $(u_0,F_0,\phi_0)\in H^3\times H^3\times H^{4}_Q$. Then the Cauchy problem \eqref{ori_sys} with constraints \eqref{constraints1-re} admits a unique local solution $(u,F,\phi)$ on $[0,T]$ satisfying
	\begin{equation*}
		\lV u\rV_{H^3}+\lV F\rV_{H^3}+\lV \phi\rV_{H^{4}_Q}\leq C_3(\lV u_0\rV_{H^3},\lV F_0\rV_{H^3},\lV \phi_0\rV_{H^{4}_Q})\,,
	\end{equation*}
	where the time interval $T$ depends on initial data $\lV u_0\rV_{H^3},\lV F_0\rV_{H^3}$ and $\lV \phi_0\rV_{H^4_Q}$.
	Moreover, if the initial data $(u_0,F_0,\phi_0)\in H^k\times H^k \times H^{k+1}_Q$ for any integer $k\geq 3$, then we have
\begin{equation*}
\lV u\rV_{H^k}+\lV F\rV_{H^k}+\lV \phi\rV_{H^{k+1}_Q}\leq C_k(k,\lV u_0\rV_{H^k},\lV F_0\rV_{H^k},\lV \phi_0\rV_{H^{k+1}_Q})\,.
\end{equation*}
\end{thm}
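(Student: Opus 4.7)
The natural strategy is a parabolic regularization combined with the parallel transport (moving frame) method for the Schr\"odinger-map block. For $\varepsilon>0$ I would replace the system \eqref{ori_sys} by its viscous/Gilbert-damped version: add $\varepsilon\De u$ and $\varepsilon\De F$ to the first two equations and $-\varepsilon\,\phi\times(\phi\times\De\phi)$ to the magnetization equation. The resulting system is strongly parabolic in $(u^\varepsilon, F^\varepsilon)$ and Landau-Lifshitz-Gilbert in $\phi^\varepsilon$, so local smooth solutions in $H^k\times H^k\times H^{k+1}_Q$ follow from a standard Picard iteration in the parabolic scale. The Gilbert choice of damping is made so that $\phi^\varepsilon\cdot\bigl(\phi^\varepsilon\times(\phi^\varepsilon\times\De\phi^\varepsilon)\bigr)=0$, which gives $\d_t|\phi^\varepsilon|^2+u^\varepsilon\cdot\nab|\phi^\varepsilon|^2=0$ and hence $|\phi^\varepsilon|\equiv 1$; similarly $\div u^\varepsilon$, $\nab\cdot(F^\varepsilon)^T$ and the antisymmetric combination in \eqref{constraints1-re} each satisfy a linear parabolic equation with vanishing initial data, so all four constraints are preserved along the regularized flow.

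The core of the argument is a uniform-in-$\varepsilon$ a priori estimate at the level $H^k\times H^k\times H^{k+1}_Q$. The $L^2$ bound is free from the energy identity \eqref{EConsLaw} together with the non-negative dissipation produced by the $\varepsilon$-terms. For higher derivatives, a naive commutator estimate on $\phi\times\De\phi$ loses a derivative, so I would build an orthonormal frame $\{e_1,e_2\}$ along $\phi^\varepsilon$ (either a caloric/Coulomb gauge, or obtained by spatial parallel transport from a fixed frame over $Q$) and encode $\d_j\phi^\varepsilon$ as complex coefficients $q_j=\langle\d_j\phi^\varepsilon,e_1\rangle+i\langle\d_j\phi^\varepsilon,e_2\rangle$. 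In this gauge the geometric equation becomes a covariant nonlinear Schr\"odinger equation whose top-order piece is self-adjoint, and a commutator computation closes on a Gr\"onwall inequality for $\lV u^\varepsilon\rV_{H^k}+\lV F^\varepsilon\rV_{H^k}+\lV q\rV_{H^k}$ on a time interval $T$ depending only on the initial norms.

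With the uniform bounds in hand I would extract a weak-$\ast$ limit in $L^\infty_t H^k_x$, upgrade to strong convergence in $L^2_t H^{k-1}_{\loc}$ via Aubin-Lions (time regularity read off the equations themselves), and pass every nonlinear term to the limit; strong local convergence also preserves the pointwise constraints \eqref{constraints1-re}. Uniqueness in this class follows from an $H^1$-level energy estimate on the difference of two solutions, using Sobolev embedding in $d\in\{2,3\}$ to absorb the products coming from the coupling. The main obstacle is the uniform high-order estimate: one must pick a gauge that is simultaneously well-adapted to the Laplacian on $\phi^\varepsilon$ and compatible with the convective transport by $u^\varepsilon$, so that the derivative loss intrinsic to the Schr\"odinger map is cancelled without reintroducing new derivative losses through the coupling to the hyperbolic block $(u,F)$ and through the elastic stress $\nab\phi\odot\nab\phi$ in the momentum equation.
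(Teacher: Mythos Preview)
Your proposal is essentially the approach the paper itself invokes: the paper does not prove Theorem~\ref{main-Thm} in the text but cites \cite{HJJ}, describing the method there as ``approximation of perturbed parabolic system and parallel transport method,'' which is exactly your parabolic regularization plus moving-frame scheme. So your outline agrees with the intended proof; the only caveat is that the paper offers no details against which to check the gauge choice or the uniform estimates, so the correctness of the specific implementation (e.g.\ how the convective term interacts with the Coulomb/parallel-transport frame) must be verified independently rather than by comparison with this paper.
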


Our main theorem is stated as follows.
Here the precise definitions of the function spaces will be given in Section \ref{sec-FS}.

\begin{thm}[Global regularity and scattering for small data in 3-D]           \label{Ori_thm0}
Let integer $N\geq 9$ and $0<\de\leq 1/8$ be two given constants, and $Q\in \S^2$ be a fixed unit vector.  the initial data $(u_0,F_0-I,\phi_0)\in H^N_\Lambda$. Then there exists $\ep_0>0$ sufficiently small depending on $N$ and $\de$ such that, for all initial data $(u_0,F_0,\phi_0)$ satisfying the constriants \eqref{constraints1-re} with
\begin{equation}  \label{Main-ini0}
\sum_{|a|\leq N}\big(\|\Lambda^a (u_0,F_0-I)\|_{L^2_x}+\|  \Lambda^a (\phi_0-Q)\|_{L^2_x} +\|\nab \Lambda^a \phi_0\|_{L^2_x}\big)\leq \ep_0\,,
\end{equation}
the evolutionary model \eqref{ori_sys} of magnetoelasticity with constraints \eqref{constraints1-re} is global in time well-posed in three dimensions. Moreover, for all $t\in[0,+\infty)$ the solution $(u,F-I,\phi)\in H^{N}_Z$ has the bounds
	\begin{equation}  \label{energybd0}
		E^{1/2}_N(u,F-I,\phi;t)\les \ep_0\<t\>^\de\,,\quad E^{1/2}_{N-2}(u,F-I,\phi;t)\les \ep_0\,.
	\end{equation}
In addition, for any multiindex $|a|\leq N-4$, there exists matrix $\GG^{(a)}_\infty: \R^3\rightarrow \C^{3\times 3}$ in $L^2_x$ such that
 \begin{align} \label{Scatter-G}
 &\lim_{t\rightarrow \infty}\|Z^a (F-I)-\Re \big(e^{-it|\nab|}\mathcal G^{(a)}_\infty\big)\|_{L^2}+\| Z^a u+\sum_{j=1}^3\Im\big(e^{-it|\nab|}|\nab|^{-1}\d_j\mathcal G^{(a)}_{\infty,\cdot j}\big)\|_{L^2_x}=0\,,
\end{align}
and, in the mass and energy spaces, there exist two complex vectors $\Phi_{1,a},\ \Phi_{2,a}: \R^3\rightarrow\C^3$ in $H^1_x$ such that
\begin{equation}  \label{scattering0}
	\lim_{t\rightarrow\infty} \|Z^a(\phi-Q)-\Re(e^{-it\De}\Phi_{1,a})-\Im(e^{-it\De}\Phi_{2,a})\|_{H^1_x}=0\,.
\end{equation}
\end{thm}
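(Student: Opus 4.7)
The plan is to run a bootstrap argument on two vector-field energies simultaneously: the top-level $E_N$ (built from the translations $\Lambda^a$ together with the rotation/scaling fields $Z^a$, and allowed to grow as $\langle t\rangle^\delta$) and the intermediate $E_{N-2}$ (kept uniformly $O(\varepsilon_0)$). Before commuting vector fields I would first reduce the Schr\"odinger-map subsystem to a gauged NLS by introducing a moving frame on $\phi^*T\S^2$ (caloric or Coulomb gauge), so that the differentiated field $\psi_j=\<\d_j\phi,e\>_\C$ solves a magnetic NLS and the convective term $u\cdot\nab\phi$ becomes a first-order magnetic-type perturbation. In parallel, I would diagonalize the elastic pair $(u,F-I)$ through the complex unknown $\mathcal G=(F-I)-i|\nab|^{-1}\nab u$, which puts it in the half-wave form $\d_t\mathcal G+i|\nab|\mathcal G=(\text{quadratic})$; this is precisely what motivates the unit-speed profile $e^{-it|\nab|}\GG^{(a)}_\infty$ in \eqref{Scatter-G}. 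The full system is then a quasilinear wave-Schr\"odinger coupling amenable to the Klainerman vector-field machinery.

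The heart of the argument is a commuted energy inequality of the schematic form
\[
\tfrac{d}{dt}E_N(t)\les \big(\|(u,F-I)\|_{L^\infty}+\|\nab\phi\|_{L^\infty}\big)\,E_N(t)+\text{l.o.t.},
\]
obtained by commuting $Z^a\Lambda^b$ past the system, using the symmetric structure of $\nab\cdot(FF^T-\nab\phi\odot\nab\phi)$ together with the constraints $\div u=0$, $\nab\cdot F^T=0$, and $F_{mj}\d_m F_{ik}=F_{mk}\d_m F_{ij}$ to avoid derivative losses, and exploiting the skew-symmetry of the gauged covariant derivative on the Schr\"odinger side. The crucial ingredient, highlighted in the abstract, is to include the mass term $\|\phi-Q\|_{L^2}^2$ inside $E_N$: this furnishes an extra unit of decay at the Klainerman-Sobolev step and keeps the intrinsically borderline three-dimensional dispersion of $\phi$ from spoiling the bootstrap. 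The convective coupling $u\cdot\nab\phi$ is treated by trading derivatives in two different ways depending on whether the wave factor or the Schr\"odinger factor is placed in $L^\infty$.

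To close the bootstrap I would invoke Klainerman-Sobolev inequalities adapted to $Z^a$ to extract the pointwise decay $\|(u,F-I)\|_{L^\infty}\les \varepsilon_0\<t\>^{-1}$ from $E_{N-2}$, and the Schr\"odinger decay $\|\phi-Q\|_{L^\infty}+\<t\>^{1/2}\|\nab\phi\|_{L^\infty}\les \varepsilon_0\<t\>^{-3/2}$ from the mass-augmented energy. Feeding these back and applying Gr\"onwall gives $E_N(t)\les \varepsilon_0\<t\>^{C\varepsilon_0}$ and $E_{N-2}(t)\les \varepsilon_0$, which beats the bootstrap once $\varepsilon_0$ is small enough that $C\varepsilon_0<\delta$. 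Scattering then follows in a standard way: the Duhamel profiles $e^{it|\nab|}Z^a\mathcal G(t)$ and $e^{-it\De}Z^a(\phi-Q)$ are Cauchy in $L^2_x$ and $H^1_x$ respectively as $t\to\infty$ because their time-derivatives are nonlinear and, by the same $L^\infty$ decay, integrable in time; the limits define $\GG^{(a)}_\infty$ and $\Phi_{1,a},\Phi_{2,a}$ after separating real and imaginary parts as in \eqref{Scatter-G} and \eqref{scattering0}.

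The main obstacle I anticipate is precisely this time-integrability. In three dimensions the wave component decays only at the critical rate $\<t\>^{-1}$, so a naive estimate produces a divergent Gr\"onwall factor and a divergent scattering integral. The mass-augmented energy, together with null/divergence structure in the elastic stress $\nab\cdot(FF^T-\nab\phi\odot\nab\phi)$, must jointly gain the missing $\<t\>^{-\delta}$ factor in the effective coefficient multiplying $E_N$, either through the cancellations afforded by $\div u=0$ and $\nab\cdot F^T=0$ or through a normal-form transformation absorbing the non-resonant quadratic terms. Handling the quasilinear convective derivative $u\cdot\nab$ on the $\phi$-equation without a derivative loss, while keeping the chosen gauge compatible with the commuting vector fields $Z^a$, is the delicate technical core that I expect to demand the most care.
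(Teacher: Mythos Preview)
Your overall architecture is right: Coulomb gauge for $\phi$, the half-wave variable $\mathcal G$ for $(u,F-I)$, a two-tier bootstrap on $E_N$ and $E_{N-2}$, and the crucial inclusion of the mass $\|Z^a(\phi-Q)\|_{L^2}$ in the energy. But there is a genuine gap in how you obtain Schr\"odinger decay, and this forces a richer bootstrap than the one you describe.

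The vector-field family $Z=(\partial,\tilde\Omega,\tilde S)$ with $\tilde S=2t\partial_t+x\cdot\nabla+1$ does \emph{not} yield dispersive decay for Schr\"odinger via Klainerman--Sobolev: those fields contain no Galilean boost, and a bound on $E_{N-2}$ alone gives no time decay for $\|\psi\|_{L^p}$. Your claimed rate $\|\nabla\phi\|_{L^\infty}\lesssim\varepsilon_0\langle t\rangle^{-2}$ is therefore not accessible this way. What the paper actually does is enlarge the bootstrap to include two weighted quantities in addition to $E_N,E_{N-2}$: a Klainerman--Sideris weight $X_j=\sum_{|a|\le j-1}\|\langle t-r\rangle\nabla Z^a(u,G)\|_{L^2}^2$ for the elastic part, and a profile weight $Y_j=\sum_{|a|\le j-1}\|x\cdot\nabla|\nabla|^{-1}\Psi^{(a)}\|_{L^2}$ with $\Psi^{(a)}=e^{it\Delta}Z^a\psi$ for the Schr\"odinger part. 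The identity $(x\cdot\nabla-2it\Delta)e^{-it\Delta}=e^{-it\Delta}(x\cdot\nabla)$ then converts control of $Y_j$ into the decay $\|Z^a\psi\|_{L^6}\lesssim t^{-1}(Y_{|a|+1}^{1/2}+E_{|a|+1}^{1/2})$, which is the rate actually used throughout. The bound on $Y_j$ is obtained not by Gr\"onwall but by writing $x\cdot\nabla=S-2t\partial_t$ on the profile and estimating $t\|\partial_t|\nabla|^{-1}\Psi^{(a)}\|_{L^2}=t\||\nabla|^{-1}h_a\|_{L^2}$; here the compatibility $D_j\psi_k=D_k\psi_j$ and $\div u=0$ are exploited to gain the missing power of $t$ in the worst term $\nabla Z^bu\cdot Z^c\psi$.

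A second, smaller gap is in the scattering for $\phi$. The map $\phi-Q$ does not itself solve a clean Schr\"odinger equation, so arguing that $e^{-it\Delta}Z^a(\phi-Q)$ is Cauchy in $H^1$ directly is awkward. The paper instead proves $e^{it\Delta}Z^a\psi\to\Psi^{(a)}_\infty$ in $H^{-1}$ (this is exactly the estimate $\||\nabla|^{-1}h_a\|_{L^2}\lesssim\varepsilon_1^2\langle t\rangle^{-4/3+\delta}$), and then transfers to $\phi$ through the representation $Z^a(\phi-Q)=\Delta^{-1}\partial_m\big(\sum Z^bv\,\Re Z^c\psi_m+Z^bw\,\Im Z^c\psi_m\big)$, using the decay $\|v-v_\infty\|_{L^\infty}\lesssim\varepsilon_1\langle t\rangle^{-1/2+\delta}$ of the frame towards its value at spatial infinity. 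This is why $\Phi_{1,a}=v_\infty\Delta^{-1}\partial_m\Psi^{(a)}_{\infty,m}$ and $\Phi_{2,a}=w_\infty\Delta^{-1}\partial_m\Psi^{(a)}_{\infty,m}$ carry the constant frame vectors $v_\infty,w_\infty$.
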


\begin{rem}
    We will be working in a localized energy space defined by commutative vector fields $Z$ and the time independent analogue $\Lambda$, since the evolutionary model of magnetoelasiticiy is a nonlinear coupling of quasilinear elastic equations and quadratic Schr\"odinger equation. The definitions of $Z,\ \Lambda,\ H^N_\Lambda,\ H^N_Z$ and $E_N$ will be given in Section \ref{sec-FS}. In particular, the localized energy space includes the mass $Z^a(\phi-Q)\in L^2_x$, which plays an important role in decay estimates and asymptotic analysis.
\end{rem}

\begin{rem}
    i) The asymptotic behaviors in \eqref{Scatter-G} are new even for elastic equations under vector-field method. 
    ii) In addition to energy space $\dot H^1$, the asymptotic behavior \eqref{scattering0} also holds in mass space $L^2$, which is slightly different from the results in \cite[(1.5)]{Li} by Ze Li. There the first asymptotic behavior in energy space for Schr\"odinger map, not only for gauged equation, was provided.
\end{rem}

\subsection{An overview of the paper}
Our first objective in this article will be to provide a suitable formulation of the Schr\"odinger flow, interpreted as a nonlinear Schr\"odinger equation where the nonlinearities admit good structure.
The formulation \eqref{ori_sys} has a key gauge freedom for the orthonormal frame on
the tangent bundle, where a suitable gauge should be chosen to obtain a suitable Schr\"odinger equation. This is the key step in order to analyze the long-time behaviors of \eqref{ori_sys}.

This reformulation of \eqref{ori_sys} is stated in \secref{sec-2}, where we mainly rewrite the Schr\"odinger flow as a nonlinear Schr\"odinger equation for well chosen variables. These independent variables, denoted by $\psi_m$, represents the projection of the tangent vector fields on $t$, in complex notation. In addition to the variables,
we will use several dependent variables, as follows:
\begin{itemize}
    \item The magnetic potential $A_m$ for $m=1,\cdots,d$, associated to the natural connection on the tangent bundle $T\S^2$,
    \item The corresponding temporal component $A_{d+1}$.
\end{itemize}
These additional variables will be viewed as uniquely determined by our
variables $\psi$, provided that a suitable gauge choice was made. This is done by choosing Coulomb gauge on $T\S^2$, i.e. $\div A=0$.
In the next section we describe the derivations of reformulation, so that by the
end we obtain a nonlinear Schr\"odinger equation for $\psi$, with $A$ and $A_{d+1}$ determined by $\psi$ in an elliptic fashion, together with suitable compatibility conditions (constraints).

Setting the stage to solve these equations, in Section \ref{sec-FS} we describe the function spaces
for the unknowns $u,\ G$ and $\psi$. First, we introduce the commutative vector fields corresponding to the coupled system, which are used to define the Klainerman's generalized energy and $L^2$ weighted norms. Second, we provide a lemma in order to yield the initial data for reformulated system, and then we state our second main theorem and bootstrap proposition.

The main strategy to establish global regularity and scattering in Theorem \ref{Ori_thm0} relies on an interplay
between the control of high order energies and decay estimates, which is based on the vector-fields
method and weighted energy method.
In Section \ref{sec-dec}, we review several weighted and decay estimates, then the decays of velocity $u$, displacement gradient $G=F-I$ and differential fields $\psi$ are obtained. In particular, as a corollary we also establish the decay estimates for connection $A,\ A_{d+1}$ and the frame $(v,\ w)$ near $v_{\infty}=\lim_{|x|\rightarrow\infty}v(x)$ and $w_{\infty}=\lim_{|x|\rightarrow\infty}w(x)$ respectively.
With the above decays at hand, in Section \ref{sec-EnEs}, we establish the energy estimates for higher-order and low-order energy norms, respectively. 
In Section \ref{sec-L2}, we establish the $L^2$ weighted estimates,   
whose proof relies on the Klainerman-Sideris type estimate and the special structure of nonlinearities.
The analysis is completed in Section \ref{sec-proof}, where we use the energy and $L^2$ weighted estimates in order to (i) prove the bootstrap proposition, then (ii) to establish global existence and scattering of the reformulated system. Finally, we construct solutions for the Schr\"odinger flow and the scattering \eqref{scattering0} in mass and energy spaces.

The analysis here broadly follows the vector-field method
introduced by Klainerman in \cite{Kla85}, but a number of improvements are needed which allow us to take
better advantage of the structure of the magnetoelastic equations.
In order to establish our main result, some key observations and novelties are emphasized as follows.

\emph{1. Energy norms and decay estimates.}
The choice of localized energy norms plays an important role in our proof. As mentioned in pioneer work \cite{Kla85}, the vector fields should be used in order to gain decay estimates due to the quasilinearity of systerm \eqref{ori_sys}. However, this does not enable us to obtain a good decay estimates for the Schr\"odinger flow. Fortunately, noticing that the system not only has the energy conservation law \eqref{EConsLaw}, but also has the mass conservation law \eqref{MConsLaw}, which suggests us defining our energy norms by 
\begin{align*}
	E_j(t)=\sum_{|a|\leq j}\Big(\|Z^a u\|_{L^2}^2+\|Z^a (F-I)\|_{L^2}^2+\|\nab Z^a\phi\|_{L^2}^2+\|Z^a(\phi-Q)\|_{L^2}^2\Big)\,,
\end{align*}
where the vector fields $Z$ are given in \secref{sec-FS}. 
We remark that the last term is the key for the decay estimates of Schr\"odinger flow and scattering \eqref{scattering0}.

\emph{2. Cancellations and geometric structure.}
The lack of viscosity and damping in the system \eqref{ori_sys} forces us to investigate the cancellations and geometric structure. We must point out that the model \eqref{ori_sys} has two cancellations
\begin{align*}
    \<u,\nab\cdot (\nab\phi\odot\nab\phi) \>+\<\nab\phi,\nab(u\cdot\nab\phi)\>=0\,,\\
    \<u,\nab\cdot (FF^T)\>+\<F,\nab u F\>=0\,,
\end{align*}
and a compatibility condition for Schr\"odinger map
\begin{equation*}
    (\d_j+iA_j)\psi_k=(\d_k+iA_k)\psi_j\,,
\end{equation*}
which play an essential role in the derivation of the a-priori estimates to the system \eqref{ori_sys}. 
Since we reformulate the Schr\"odinger flow in Coulomb gauge, we can also rephrase the above first cancellation as 
\begin{equation*}
	-\<u,\d_j \Re(\psi\overline{\psi_j})\>-\<\psi_j, \Re(\d_j u\cdot\overline{\psi})\>=0\,.
\end{equation*}
On one hand, the above three conditions enable us get through the derivative loss, and are used in Section \ref{sec-EnEs} to prove higher-order energy estimates. On the other hand, we also must utilize them to gain more decays, especially dealing with the convective terms.

\emph{3. Ths scattering of Schr\"odinger flow.} The energy norms including the mass $\|Z^a(\phi-Q)\|_{L^2}$ motivates us to gain a better scattering for Schr\"odinger map. Precisely, we can prove that the nonlinearities in $H^{-1}$ of Schr\"odinger equation for $\psi$ are decays with the rate $\<t\>^{-4/3+\de}$ at least, then the differential field $\psi$ scatters in the function space $H^{-1}$. Return to the map $\phi$, we can establish the scattering of Schr\"odinger flow in mass and energy spaces correspondingly.

\medskip
\section{Reformulation and function spaces}\label{sec-2}
In this section, first we reformulate the system \eqref{ori_sys} by introducing our main independent variable $G$ and $\psi$. Second, we introduce the commutative vector fields corresponding to the symmetries, and then define the energy norms and weighted $L^2$ generalized energies. In the end, we state our main result and the main bootstrap proposition for system \eqref{sys-2}.

\subsection{Reformulation of \eqref{ori_sys}}
Since the deformation gradient $F$ is near the equilibrium $I$, we denote the \emph{displacement gradient} as
\begin{equation*}
	G:=F-I\,.
\end{equation*}
Then the equation of $F$ in \eqref{ori_sys} is written as
\begin{equation*}  
	\d_t G-\nab u =-u\cdot \nab G+\nab u G\,,
\end{equation*}
with the associated constraints expressed as 
\begin{align*}   
	\d_j G_{ik}-\d_k G_{ij}=G_{mk}\d_m G_{ij}-G_{mj} \d_m G_{ik},\quad  \nab\cdot G^T=0.
\end{align*}
By the above notations and $\nab\cdot G^T=0$, the quadratic term $\nab\cdot (FF^{T})$ is rewritten as
\begin{align*}
	\nab\cdot(FF^T)&=\nab\cdot[(I+G)(I+G)^T]=\nab\cdot G+\nab\cdot G^T+\nab\cdot(GG^T)=\nab\cdot G+\nab\cdot(GG^T).
\end{align*}
Then the equation of $u$ in \eqref{ori_sys} is given by
\begin{equation*}   
\d_t u-\nab\cdot G+\nab p=-u\cdot \nab u+\nab\cdot(GG^T)-\div  (\nab \phi\odot \nab \phi).
\end{equation*}

Next, we turn our attention to the reformulation of Schr\"odinger flow. 
Assume that the smooth function $\phi:[0,T]\times \R^n\rightarrow \S^2$ satisfies the Schr\"odinger map equation in \eqref{ori_sys}.
Let $v,w\in T_{\phi}\S^2$ be tangent vector fields satisfying $\phi\times v=w, w\times \phi=v$.
Then we define the differentiated variables $\psi_m$ for $m=1,\cdots,d,d+1$, connection coefficients $A_m$ and $A_{d+1}$,
\begin{equation}   \label{diff-field}
\psi_m=\d_m \phi\cdot v+i\d_m \phi\cdot w,\qquad A_m=\d_m v\cdot w, \qquad A_{d+1}=(\d_t v)\cdot w.
\end{equation}
These allow us to express $\d_m \phi,\ \d_m v$ and $\d_m w$ in the frame $(\phi,v,w)$ as
\begin{equation}    \label{phi-v-w}
\left\{ \begin{aligned}
&\d_m \phi=v\Re \psi_m+w\Im \psi_m,\\
&\d_m v=-\phi\Re \psi_m+w A_m,\\
&\d_m w=-\phi\Im \psi_m- v A_m.
\end{aligned}
\right.
\end{equation}
Denote the covariant derivative as
\begin{equation*}
D_l=\d_l+iA_l.
\end{equation*}
We then obtain the compatibility and the curvature of connection
\begin{equation}     \label{com-Dpsi}
D_l\psi_m=D_m\psi_l,\qquad D_lD_m-D_mD_l=i(\d_l A_m-\d_m A_l)=i\Im (\psi_l\Bar{\psi}_m).
\end{equation}

Now we derive the Schr\"odinger equations for the variables $\psi_m$.
By \eqref{ori_sys}, $\phi\times v=w$ and $w\times \phi=v$, we have
\begin{align} \label{psi_d+1}
\psi_{d+1}=-u\c \psi-i\sum_{l=1}^d D_l\psi_l.
\end{align}
Applying $D_m$ to \eqref{psi_d+1}, by the relations \eqref{com-Dpsi} we obtain
\begin{align*}
D_{d+1}\psi_m=-\d_m u\c \psi-u\c D\psi_m-\sum_{l=1}^d\Im(\psi_l\bar{\psi}_m)\psi_l-i\sum_{l=1}^d D_lD_l\psi_m,
\end{align*}
which is equivalent to
\begin{equation}\label{Sch-eq}
    \begin{aligned}
i\big(\d_t+(u-2A)\c \nab\big)\psi_m-\De \psi_m=&\ (A_{d+1}+(u-A)\c A+i\nab\c A)\psi_m\\
&\ -i\d_m u\c \psi-\sum_{l=1}^d i\Im(\psi_l\overline{\psi_m})\psi_l\,.
\end{aligned}
\end{equation}

Consider the system of equations which consists of \eqref{Sch-eq} and \eqref{com-Dpsi}. The solution $\psi_m$ for the above system can't be uniquely determined as it depends on the choice of the orthonormal frame $(v,w)$. Precisely, it is invariant with respect to the gauge transformation
$\psi_m\rightarrow e^{i\theta}\psi_m,\ A_m\rightarrow A_m+\d_m \theta$.
In order to obtain a well-posed system one needs to make a choice which uniquely determines the gauge. Here we choose to use the Coulomb gauge
$\div A=0$,
where the existence of Coulomb gauge was proved in \cite[Proposition 2.3]{BeIoKe}.
The relation \eqref{com-Dpsi}, combined with the Coulomb gauge, leads to
\begin{equation}  \label{A_m-Eq}
\De A_m=\sum_{l=1}^d\d_l \Im (\psi_l\bar{\psi}_m)\,,\qquad m=1,\cdots, d\,.
\end{equation}
Similarly, by \eqref{com-Dpsi} and \eqref{psi_d+1} we also have the compatibility condition
\begin{align*}
\d_t A_m-\d_m A_{d+1}=\Im (\psi_{d+1}\bar{\psi}_m)=\Im [(-u\c \psi-\sum_{l=1}^d iD_l\psi_l)\bar{\psi}_m].
\end{align*}
This combined with the Coulomb gauge $\div A=0$ implies
\begin{equation}\label{A_d+1-Eq}
\begin{aligned}
\De  A_{d+1}
&=\sum_{m,l=1}^d\d_m\d_l\Re (\psi_l\bar{\psi}_m)-\frac{1}{2}\De|\psi|^2+\sum_{m=1}^d\d_m\Im (u\c \psi\bar{\psi}_m)\,.
\end{aligned}
\end{equation}

In conclusion, under the Coulomb gauge $\div A=0$, by \eqref{Sch-eq}, \eqref{A_m-Eq} and \eqref{A_d+1-Eq} we obtain the system for velocity $u$ and differentiated fields $\psi_m$
\begin{equation}\label{sys-2}
\left\{ \begin{aligned}
&\d_t u-\nab\cdot G+\nab p=-u\cdot \nab u+\nab\cdot(GG^T)-\sum_{j=1}^3\d_j\Re(\psi \bar\psi_j)\,,\\
&\d_t G-\nab u =-u\cdot \nab G+\nab u G\,,\\
&\begin{aligned}
i(\d_t+(u-2A)\c \nab)\psi_m+\De \psi_m=&(A_{d+1}+(u-A)\c A)\psi_m\\
&-i\d_m u\c \psi-i\sum_{l=1}^d\Im(\psi_l\bar{\psi}_m)\psi_l\,,
\end{aligned}\\
&(u,G,\psi)\big|_{t=0}=(u_0,G_0,\psi_0)\,,
\end{aligned}\right.
\end{equation}
where connection coefficients $A_m$ and $A_{d+1}$ are determined at fixed time in an elliptic fashion via the following equations
\begin{equation}   \label{A,Ad+1}
    \left\{\begin{aligned}
    &\De A_m=\sum_{l=1}^d\d_l \Im (\psi_l\bar{\psi}_m)\,,\\
    &\De  A_{d+1}=\sum_{m,l=1}^d\d_m\d_l\Re (\psi_l\bar{\psi}_m)-\frac{1}{2}\De|\psi|^2+\sum_{m=1}^d\d_m\Im (u\c \psi\bar{\psi}_m)\,.
\end{aligned}   \right.
\end{equation}
and $u,\ G$ satisfy the constraints
\begin{align}   \label{constraints-re}
    \div u=0,\quad \nab\cdot G^T=0,\quad
	\d_j G_{ik}-\d_k G_{ij}=\sum_m (G_{mk}\d_m G_{ij}-G_{mj} \d_m G_{ik}).
\end{align}
We can assume that the following
conditions hold at infinity in an averaged sense:
$A_m(\infty) = 0,\ A_{d+1}(\infty)=0$.
These are needed to insure the unique solvability of the above elliptic equations in a suitable class of functions. 
Therefore, to obtain our global solution for system \eqref{ori_sys}, it suffices to study the differentiated system \eqref{sys-2}-\eqref{constraints-re}.

\subsection{Function spaces and bootstrap proposition}\label{sec-FS}
Since the system admits rotational and scaling invariance, then we can introduce the associated commutative vector fields, which has been considered in \cite{CLLM-CPAM2019,CW}.
From rotational transformation, we obtain the perturbed angular momentum operators $\tilde\Om$, which acting on the unknowns $u,F,\phi$ and the associated variables in \eqref{sys-2}: $p$, $G$, $\psi$, $A$ and $A_{d+1}$ are given by
\begin{align*}
\tOm_i (u,F,\phi,p,G,\psi,A,A_{d+1})
:=&\ \big(\Om_i u +M_i \cdot u,\ \Om_i F +[M_i ,F],\ \Om_i\phi,\ \Om_i p,\ \Om_i G +[M_i ,G],\\
\ &\quad  \ 
\Om_i \psi+M_i \cdot\psi,\ \Om_i A+M_i\cdot A,\ \Om_i A_{d+1}\big)\,,
\end{align*}
where $M_j$ are the generators of rotations given by
\begin{equation*}
	M_1=\left(\begin{array}{ccc}
		0&0&0\\
		0&0&1\\
		0&-1&0
	\end{array}\right),\ \
	M_2=\left(\begin{array}{ccc}
		0&0&-1\\
		0&0&0\\
		1&0&0
	\end{array}\right),\ \
	M_3=\left(\begin{array}{ccc}
		0&1&0\\
		-1&0&0\\
		0&0&0
	\end{array}\right),
\end{equation*}
$\Om = (\Om_1, \Om_2, \Om_3)$ is the rotation vector feld $\Om = x \times \nab $, and $[M_i,G]=M_i G-GM_i$ denotes the standard Lie bracket product.
In view of the scaling invariance,
the perturbed scaling operators $\tilde S$ acting on the unknowns are given by
\begin{align*}
	\tS (u,F,\phi,p,G,\psi,A_m,A_{d+1})
	:=&\ \big((S+1)u,\ (S+1)F,\ S\phi,\ (S+2)p,\ (S+1)G,\\
       &\quad  (S+1)\psi,\ (S+1)A_m,\ (S+2)A_{d+1}\big)\,,
\end{align*}
where $S$ is the standard scaling vector field $S=2t\d_t+x\cdot\nab$.

For the convenience of use, we will occasionally utilize the notations $\tilde \Om_i,\ \tilde S$ when the above perturbed angular momentum/ scaling operators acting on the indicated component of $(u,F,\phi,p,G,\psi,$ $A,A_{d+1})$. For instance, we will write $\tilde \Om_i u=\Om_i u +M_i \cdot u,\ \tilde \Om_i F=\Om_i F +[M_i ,F],\ \tilde \Om_i\phi=\Om_i\phi$, $\tilde S u=(S+1)u, \ \tS F=(S+1)F,\ \tS \phi=S\phi$ for the unknowns.

Let
\begin{equation*}
	Z=(Z_1,\cdots,Z_8)= \big\{\d_t,\d_1,\d_2,\d_3,\tOm_1,\tOm_2,\tOm_3,\tilde S\big\}\,.
\end{equation*}
For any multi-index $a=(a_1,\cdots,a_8)\in\Z_+^8$, we denote $Z^{a}=Z_1^{a_1}\cdots Z_8^{a_8}$.
We define the Klainerman's generalized energy of system \eqref{sys-2} as
\begin{equation*}   
	E_{j}(t)=\frac{1}{2}\sum_{|a|\leq j}\int |Z^a u|^2+|Z^a G|^2+\big||\nab|^{-1}Z^a\psi\big|^2+| Z^a\psi|^2\ dx\,,
\end{equation*}
We also need the weighted $L^2$ generalized energies of elastic equations and Schr\"odinger flow
\begin{align}    \label{XX}
	X_{j}(t)=&\ \sum_{|a|\leq j-1}\big(\|\<t-r\>\nab Z^a u\|_{L^2}^2+\|\<t-r\>\nab Z^a G\|_{L^2}^2\big)\,,\\ \label{YY}
    Y_{j}(t)=&\ \sum_{|a|\leq j-1}\|x\cdot\nab |\nab|^{-1}\Psi^{(a)}\|\,,
\end{align}
where $\Psi^{(a)}$ is called \emph{profile} of $Z^a\psi$ given by
\begin{align*} 
    \Psi^{(a)}=e^{it\De} Z^a\psi\,.
\end{align*}

In order to characterize the initial data, we introduce the time independent
analogue of $Z$. The only difference will be in the scaling operator. Set
\begin{equation*}
	\Lambda=(\Lambda_1,\cdots,\Lambda_7)=(\d_1,\d_2,\d_3,\tilde \Om_1,\tilde \Om_2,\tilde \Om_3,\tilde S_0)\,,\quad \tilde S_0=\tilde S-t\d_t\,.
\end{equation*}
Then the commutator of any two $\Lambda$'s is again a $\Lambda$. Define
\begin{equation*}   
	H^m_\Lambda=\Big\{ (u_0,G_0,\psi_0):\sum_{|a|\leq m}\big(\|\Lambda^a (u_0,G_0)\|_{L^2}+\||\nab|^{-1} \Lambda^a \psi_0\|_{L^2} +\| \Lambda^a \psi_0\|_{L^2}\big) <\infty \Big\}\,.
\end{equation*}
We shall solve the evolutionary system \eqref{sys-2} in the space
\begin{align*}
	H^m_Z(T)=\Big\{ (u,G,\psi): Z^a u,Z^a G, |\nab|^{-1}Z^a\psi, Z^a\psi\in L^\infty([0,T]; L^2) ,\ \text{ for any}\ |a|\leq m      \Big\}\,.
\end{align*}
For the sake of convenience, for any integer $j\in \Z^+$ and a function space $U$, we denote
\begin{align}   \label{Zj-U}
	\|\Lambda^j f\|_{U}=\sum_{|a|\leq j}\|\Lambda^a f\|_{U}\,,\qquad \|Z^j f\|_{U}=\sum_{|a|\leq j}\|Z^a f\|_{U}\,.
\end{align}

Applying the vector fields $Z's$ to \eqref{sys-2}, we can derive that
\begin{equation}     \label{sys-VF}
	\left\{
	\begin{aligned}
		&\d_t Z^a u-\nab\cdot Z^a G+\nab Z^a p=f_a\,,\\
		&\d_t Z^a G-\nab Z^a u = g_a\,,\\
		&i\d_t Z^a \psi-\De Z^a\psi=h_a\,,
	\end{aligned}
	\right.
\end{equation}
where $A$ and $A_{d+1}$ satisfy
\begin{align}    \label{Ell-A}
\De Z^a A&=\sum_{b+c=a} \sum_l C_a^b \d_l \Im (Z^b\psi_l \overline{Z^c\psi})\,,\\ \label{Ell-Ad1}
\De Z^a A_{d+1}&=\sum_{b+c=a} C_a^b \big(\sum_{l,m} \Re\d_m\d_l\Re (Z^b\psi_l\overline{Z^c\psi_m})-\frac{1}{2}\De (Z^b\psi \cdot\overline{Z^c \psi})\big)\\\nonumber
&\quad +\sum_{b+c+e=a}\sum_m C_a^{b,c}\Im\d_m(Z^b u\cdot Z^c\psi Z^e \bar{\psi}_m)\,,
\end{align}
with constraints
\begin{align}  \label{curl-vf}
	&\div Z^a u=0\,,\quad \nab\cdot Z^a G^T=0\,, \quad  
	\d_j Z^a G_{ik}-\d_k Z^a G_{ij}=\mathcal N_{a,jik}\,,\\ \label{comm}
 & \d_j Z^a\psi_k=\d_k Z^a\psi_j+i \sum_{b+c=a} C_a^b (Z^b A_k Z^c\psi_j-Z^b A_j Z^c\psi_k)\,.
\end{align}
Here the nonlinearities are given by
\begin{align}
    &\begin{aligned}\label{fa}
    f_a:&=\sum_{b+c=a}C_a^b \big(- Z^b u\cdot \nab Z^c u +\nab\cdot (Z^b G Z^c G^T)- \d_j \Re ( Z^b\psi\cdot  \overline{Z^c\psi_j})\big)\,,
    \end{aligned}	\\   \label{ga}
	&g_a:=\sum_{b+c=a}C_a^b \big(-Z^b u\cdot \nab Z^cG+\nab Z^b u Z^c G\big)\,,
\end{align}
\begin{align}   \label{ha}
&\begin{aligned}
h_a:&= \sum_{b+c=a} C_a^b \big(-iZ^b (u-2A)\cdot \nab Z^c\psi-i\nab Z^b u\cdot Z^c\psi+Z^b A_{d+1} Z^c\psi\big)\\
&\quad +\sum_{b+c+e=a} C_a^{b,c} \big(Z^b (u-A)\cdot Z^c A Z^e\psi-i\sum_l  \Im(Z^b \psi_l \overline{Z^c\psi})Z^e\psi_l\big),
\end{aligned}\\   \label{Na}
&\mathcal N_{a,jik}:=\sum_{b+c=a}\sum_m\Big(C_a^b Z^b G_{mk}\d_m Z^c G_{ij}- Z^b G_{mj} \d_m  Z^c G_{ik}\Big)\,.
\end{align}
and the constant coefficients are defined as 
$C_a^b:=\frac{a!}{b!(a-b)!},\  C_a^{b,c}:=\frac{a!}{b!c!(a-b-c)!}$.

First, we prove quantitative estimates for the differentiated fields $\psi$ with respect to $\phi$.
\begin{lemma}[Bounds for $\psi$]\label{psi-by_phi-Lemma}
Let integer $N\geq 9$. Assume that $\phi$ is a Schr\"odinger map, the orthonormal frame $(v,w)$ satisfies the Coulomb gauge associated with map $\phi$. If the map $\phi$ has the additional property
\begin{equation}   \label{Ass}
	\sum_{|a|\leq N}\big(\|\nab \Lambda^a \phi\|_{L^2}+\| \Lambda^a(\phi-Q)\|_{L^2}\big)\leq \ep_0\,,
\end{equation}
then for the differentiated fields $\psi_m$ \eqref{diff-field} with $1\leq m\leq d$ we have the bounds
\begin{equation}\label{psi-by}
	\sum_{|a|\leq N}\big(\| \Lambda^a\psi\|_{L^2}+\| |\nab|^{-1}\Lambda^a\psi\|_{L^2}\big)\leq C_1 \ep_0\,.
\end{equation}
\end{lemma}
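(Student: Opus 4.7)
I argue by induction on $|a|\leq N$, simultaneously controlling the $\Lambda$-derivatives of $\psi$, of the orthonormal frame $(v,w)$, and of the Coulomb connection $A$. The backbone of the argument is the pointwise identity $\psi_m=\partial_m\phi\cdot(v+iw)$ (whose modulus equals $|\partial_m\phi|$ since $\{v,w\}$ span $T_\phi\S^2$) together with its shifted version
\[
\psi_m=\partial_m\bigl[(\phi-Q)\cdot(v+iw)\bigr]-(\phi-Q)\cdot\partial_m(v+iw),
\]
obtained from $\partial_m Q=0$, the elliptic equation $\Delta A_m=\sum_l\partial_l\Im(\psi_l\bar\psi_m)$ from \eqref{A_m-Eq}, and the frame ODEs $\partial_m v=-\phi\Re\psi_m+wA_m$, $\partial_m w=-\phi\Im\psi_m-vA_m$ from \eqref{phi-v-w}. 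Since $|v|=|w|=|\phi|=1$ pointwise and the Coulomb gauge admits limits $v_\infty,w_\infty\in\S^2$ at spatial infinity, each $\Lambda^a(v-v_\infty)$ and $\Lambda^a(w-w_\infty)$ is placed in $L^p$ for $p<\infty$ via its gradient using $\dot H^1\hookrightarrow L^6$ in three dimensions.

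For the $L^2$ half of \eqref{psi-by} I distribute $\Lambda^a$ across $\partial_m\phi\cdot(v+iw)$ by Leibniz. The main term $\Lambda^a\partial_m\phi\cdot(v+iw)$ is bounded in $L^2$ by $\|\nabla\Lambda^a\phi\|_{L^2}\|(v,w)\|_{L^\infty}\lesssim\ep_0$ using \eqref{Ass}, while commutators $[\Lambda^a,\partial_m]$ are of lower order and absorbed by induction. In the remaining Leibniz terms a positive number of $\Lambda$'s fall on the frame; these are controlled by H\"older together with $\dot H^1\hookrightarrow L^6$ and $\dot H^{1/2}\hookrightarrow L^3$, after bounding $\|\Lambda^c A\|_{L^2}$ through the elliptic equation for $A$ and $\|\Lambda^c\nabla(v,w)\|_{L^2}$ through the frame ODEs by norms of $\Lambda^{\leq c}\psi$ and $\Lambda^{\leq c}A$ available from earlier induction steps.

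For the $\dot H^{-1}$ half I apply $\Lambda^a$ to the shifted identity and move $\partial_m$ to the outside. The piece $|\nabla|^{-1}\partial_m\Lambda^a[(\phi-Q)\cdot(v+iw)]$ is an $L^2$-bounded Riesz transform of $\Lambda^a[(\phi-Q)\cdot(v+iw)]$, which by the same Leibniz scheme as above (now with $\|\Lambda^a(\phi-Q)\|_{L^2}\leq\ep_0$ from \eqref{Ass} replacing the gradient bound) is controlled by $\ep_0$. The piece $|\nabla|^{-1}\Lambda^a[(\phi-Q)\cdot\partial_m(v+iw)]$ is estimated via the Hardy--Littlewood--Sobolev inequality $|\nabla|^{-1}:L^{6/5}\to L^2$ in $\R^3$ together with H\"older's $L^2\cdot L^3\hookrightarrow L^{6/5}$: the $\phi-Q$ factor and its $\Lambda$-derivatives go into $L^2$, and the frame-derivative factor is converted via the frame ODEs into $L^3$-norms of $\Lambda^{\leq c}\psi$ and $\Lambda^{\leq c}A$, handled by $\dot H^{1/2}\hookrightarrow L^3$ and the elliptic equation for $A$.

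The central difficulty is the interlocked dependence of $\psi$, $A$, and $(v,w)$: the connection is determined elliptically from $\psi$, the frame from $\phi$ and $A$, and the very identities used to bound $\psi$ involve the frame. I stage the induction so that at each step $|a|=k$ the order-$(k-1)$ bounds on $\psi$ are used first to close order-$k$ estimates on $\Lambda^k A$ and $\Lambda^k(v-v_\infty,w-w_\infty)$ in the Sobolev norms required above, and only then is $\Lambda^k\psi$ itself bounded through the two identities. The smallness of $\ep_0$ absorbs any genuinely top-order cross-terms that resist clean splitting, yielding the asserted constant $C_1\ep_0$.
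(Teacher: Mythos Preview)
Your proposal is essentially the paper's proof: the same induction on $|a|$, the same Leibniz expansion of $\psi_m=\partial_m\phi\cdot(v+iw)$ for the $L^2$ bound, the same shifted identity $\psi_m=\partial_m[(\phi-Q)\cdot(v+iw)]-(\phi-Q)\cdot\partial_m(v+iw)$ for the $\dot H^{-1}$ bound (with $|\nabla|^{-1}\partial_m$ treated as a bounded Riesz-type operator and the remainder placed in $L^{6/5}$), and the same use of the elliptic equation \eqref{A_m-Eq} for $A$ together with the frame relations \eqref{phi-v-w}.

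One point to tighten in your write-up: the staging you describe---first closing $\Lambda^k A$ and $\Lambda^k(v,w)$ from order-$(k-1)$ bounds on $\psi$, and only afterwards bounding $\Lambda^k\psi$---does not go through as stated. The elliptic equation forces $\|\Lambda^k A\|_{L^2}\lesssim\ep_0\|\Lambda^k\psi\|_{L^2}$ (top order in $\psi$), and the frame relations give $y_k:=\|\nabla\Lambda^k(v,w)\|_{L^2}\lesssim\|\Lambda^k\psi\|_{L^2}+\ep_0 y_k+\ep_0$ (top order in both). What the paper actually does at level $k$ is derive the coupled pair
\[
\|\Lambda^k\psi\|_{L^2}\lesssim\ep_0(1+y_k),\qquad y_k\lesssim\|\Lambda^k\psi\|_{L^2}+\ep_0 y_k+\ep_0,
\]
absorb $\ep_0 y_k$ on the right of the second inequality, substitute into the first, and then absorb $\ep_0\|\Lambda^k\psi\|_{L^2}$. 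Your closing sentence about smallness handling the top-order cross-terms is precisely this mechanism; it is not a residual clean-up but the core of the inductive step, so the order of operations in your description should be adjusted accordingly.
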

\begin{proof}	
\emph{i) We bound the term $\Lambda^a\psi$ in $L^2$.}
From \eqref{diff-field} and $|v|=|w|=1$, we know that $\|\psi\|_{L^2\cap L^\infty}\les \|\nab\phi\|_{H^2}\les \ep_0$. This, combined with \eqref{A,Ad+1} and \eqref{phi-v-w}, yields $\|A\|_{L^2\cap L^3}\les \|\psi\|_{L^2\cap L^3} \|\psi\|_{L^3}\les \ep_0^2$ and $\| (\nab v,\nab w)\|_{L^2}\lesssim \| \psi\|_{L^2}+\|  A\|_{L^2}\les \ep_0$. Hence, we obtain the bounds
\begin{equation}  \label{AL23}
    \|\psi\|_{L^2\cap L^\infty}+\|A\|_{L^2\cap L^3}+\| (\nab v,\nab w)\|_{L^2}\les \ep_0.
\end{equation}

Next, we bound the term $\Lambda^a \psi$ for $1\leq |a|=j\leq N$ by induction. From the above estimates we can assume that $\|\La^{j-1}\psi\|_{L^2}+y_{j-1}\les \ep_0$ with $y_k:=\|(\nab \La^{k}v,\nab \La^{k}w)\|_{L^2}$.
By \eqref{diff-field}, $|v|=|w|=1$, Sobolev embedding and \eqref{Ass}, we have
\begin{equation}\label{BdZpsi}
\begin{aligned}
\|\Lambda^j\psi\|_{L^2}
&\les \|\nab \Lambda^j\phi\|_{L^2} (1+y_{[j/2]+1}) +\|\nab \Lambda^{[j/2]}\phi\|_{L^3} y_j\les  \ep_0 (1+y_j)\,.
\end{aligned}
\end{equation}
From the equation \eqref{A,Ad+1}, $\|\La^{j-1}\psi\|\les \ep_0$ and \eqref{AL23}, we also have
\begin{align}      \label{LajA}
\|\Lambda^j A\|_{L^2}\lesssim \|\Lambda^j\psi\|_{L^2} (\|\Lambda^{j-1}\psi\|_{L^2}+\|\psi\|_{L^3})\les  \ep_0\|\Lambda^j\psi\|_{L^2}\,.
\end{align}
The formulas \eqref{phi-v-w}, combined with \eqref{LajA}, \eqref{Ass} and \eqref{AL23}, yields for $j=1$
\begin{equation}\label{bd-vw1}
\begin{aligned}
    y_1
    \les \|(\Lambda^1 \psi,\Lambda^1 A)\|_{L^2}+\|(\Lambda^1\phi,\Lambda^1v, \Lambda^1w)\|_{L^6}\|( \psi,A)\|_{L^3}
    \les  \|\Lambda^1\psi\|_{L^2}+(\ep_0+y_1)\ep_0.
\end{aligned}
\end{equation}
For $j\geq 2$, by Sobolev embeddings, \eqref{LajA}, \eqref{Ass} and $y_{j-1}\les \ep_0$, we also have 
\begin{equation}\label{bd-vw}
\begin{aligned}
y_j \les &\ \|(\phi,v, w)\|_{L^\infty}\|(\Lambda^j \psi,\Lambda^j A)\|_{L^2}
+\|(\Lambda^{[j/2]}\phi, \Lambda^{[j/2]} v, \Lambda^{[j/2]} w)\|_{L^6}\|(\Lambda^{j-1}\psi,\Lambda^{j-1}A)\|_{L^3}\\
&\ +\|(\Lambda^{j}\phi, \Lambda^{j} v, \Lambda^{j} w)\|_{L^6}\|(\Lambda^{j-2}\psi,\Lambda^{j-2}A)\|_{L^3}\\
\lesssim &\  (1+\ep_0+y_{j-1}) \| \Lambda^j\psi\|_{L^2}+(\ep_0+y_j)\|\Lambda^{j-1}\psi\|_{L^2}
\les \|\Lambda^j \psi\|_{L^2}+\ep_0^2+\ep_0 y_j\,.
\end{aligned}
\end{equation}
Then the bounds \eqref{bd-vw1} and \eqref{bd-vw} implies $y_j\les \|\Lambda^j \psi\|_{L^2}+\ep_0$, which combined with \eqref{BdZpsi} yields $\|\Lambda^j\psi\|_{L^2}\les \ep_0$.
This completes the proof of the bound \eqref{psi-by} for $\Lambda^a \psi$ with $|a|\leq N$.

\smallskip 
\emph{ii) We bound the term $|\nab|^{-1}\La^a\psi$.}
From \eqref{diff-field} and Leibniz formula it suffices to bound
\begin{align*}
\sum_{|b+c|\leq N} \big(\||\nab|^{-1}\nab \big(\Lambda^b(\phi-Q)\cdot (\Lambda^cv+i \Lambda^cw)\big)\|_{L^2}
+\||\nab|^{-1} \big(\Lambda^b(\phi-Q)\cdot (\nab \Lambda^c v+i\nab \Lambda^c w)\big)\|_{L^2}\big).
\end{align*}
By $|v|=|w|=1$, Sobolev embeddings, \eqref{Ass} and $y_N\les \ep_0$ in \emph{i)}, the above is bounded by
\begin{align*}
 \|\Lambda^N(\phi-Q)\|_{L^2}(1+y_{[N/2]+1})+\|\Lambda^{[N/2]}(\phi-Q)\|_{L^3}y_N
 \les \ep_0(1+y_N)\les \ep_0\,.
\end{align*}
Hence, we obtain the estimate \eqref{psi-by} for $|\nab|^{-1}Z^a\psi$. This completes the proof of the lemma.
\end{proof}

The above lemma and \eqref{Main-ini0} yield the bound for initial data in \eqref{sys-2}. Then we can state the result for \eqref{sys-2}-\eqref{constraints-re} as follows.

\begin{thm}\label{Ori_thm}
Let integer $N\geq 9$ and $0<\de\leq 1/8$ be two given constants and the initial data $(u_0,G_0,\psi_0)\in H^N_\Lambda$. Then there exists $\ep_0>0$ sufficiently small depending on $N$ and $\de$ such that, for all initial data $(u_0,G_0,\psi_0)$ satisfying the constriants \eqref{constraints-re} with
	\begin{equation}  \label{Main-ini}
		\|(u_0,G_0,\psi_0) \|_{H^N_\Lambda}\leq C_1\ep_0\,,
	\end{equation}
the evolutionary model of magnetoelasticity \eqref{sys-2}-\eqref{constraints-re} is global in time well-posed in three dimensions. Moreover, for all $t\in[0,+\infty)$ the solution $(u,G,\psi)\in H^{N}_Z$ has the bounds
	\begin{equation*}  
	E^{1/2}_N(u,G,\psi;t)\leq C_2C_1\ep_0\<t\>^\de\,,\qquad E^{1/2}_{N-2}(u,G,\psi;t)\leq C_2C_1 \ep_0\,.
	\end{equation*}
There exist $\GG^{(a)}_\infty\in L^2,\ \Psi^{(a)}_\infty\in H^{-1}$ for any $|a|\leq N-4$ such that
\begin{gather}  \label{Scatter-Gre}
\lim_{t\rightarrow \infty}\| Z^a u+\Im\big(e^{-it|\nab|}|\nab|^{-1}\d_j\mathcal G^{(a)}_{\infty,\cdot j}\big)\|_{L^2}+\|Z^a G-\Re \big(e^{-it|\nab|}\mathcal G^{(a)}_\infty\big)\|_{L^2}=0\,,\\\label{scattering}
\lim_{t\rightarrow \infty}\lV Z^a\psi-e^{-it\De}\Psi^{(a)}_\infty\rV_{H^{-1}}= 0\,.
\end{gather}
\end{thm}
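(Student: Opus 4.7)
The strategy is a continuity/bootstrap argument: Theorem \ref{main-Thm} provides a local solution $(u,G,\psi)\in H^N_Z(T)$ for some $T>0$ emanating from data satisfying \eqref{Main-ini}, and it suffices to establish a priori bounds on $E_N$, $E_{N-2}$, $X_N$, $Y_N$ that strictly improve the bootstrap assumption, so that $T$ can be extended to $+\infty$. Concretely, I would introduce the bootstrap quantity
\begin{equation*}
\mathcal{B}(t):= \langle t\rangle^{-\delta}E_N^{1/2}(t)+E_{N-2}^{1/2}(t)+\langle t\rangle^{-\delta}X_N^{1/2}(t)+\langle t\rangle^{-\delta}Y_N^{1/2}(t),
\end{equation*}
and assume $\mathcal{B}(t)\leq 2C_2 C_1\ep_0$ on $[0,T^*]$, aiming to close at $C_2 C_1\ep_0$.

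The first block of the proof invokes the decay estimates from Section \ref{sec-dec} to convert $\mathcal{B}$-control into pointwise bounds for $u,G,\psi,A,A_{d+1}$ and the frame $(v,w)$; via Klainerman--Sobolev these take the shape of $\langle t\rangle^{-1}$ for the elastic unknowns (with a $\langle t-r\rangle^{-1}$ improvement from $X_N$) and $\langle t\rangle^{-3/2}$ for the Schr\"odinger variable (with the mass term $\||\nabla|^{-1}\Lambda^a\psi\|_{L^2}$ playing the key role, as emphasized in Remark 1). These decays feed into the energy estimates of Section \ref{sec-EnEs}. Multiplying the vector-field system \eqref{sys-VF} by $(Z^a u,Z^a G,|\nabla|^{-2}Z^a\psi,Z^a\psi)$ and integrating, I would exploit the two cancellations
\begin{equation*}
\langle u,\nabla\cdot(GG^T)\rangle+\langle G,\nabla u\,G\rangle=0,\qquad -\langle u,\partial_j\Re(\psi\overline{\psi_j})\rangle-\langle \psi_j,\Re(\partial_j u\cdot\overline{\psi})\rangle=0,
\end{equation*}
together with the compatibility identity \eqref{comm}, to remove the derivative losses in the commutator terms $f_a,g_a,h_a$. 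This should yield the differential inequalities
\begin{equation*}
\tfrac{d}{dt}E_N(t)\lesssim \ep_0\langle t\rangle^{-1}E_N(t),\qquad \tfrac{d}{dt}E_{N-2}(t)\lesssim \ep_0\langle t\rangle^{-1-\delta'}E_N(t),
\end{equation*}
which after Gr\"onwall produce $E_N^{1/2}\lesssim \ep_0\langle t\rangle^{C\ep_0}$ and $E_{N-2}^{1/2}\lesssim \ep_0$, absorbing the $C\ep_0$ into $\delta$. The weighted $L^2$ bounds on $X_N$ and $Y_N$ are then closed in Section \ref{sec-L2} by Klainerman--Sideris type inequalities applied to the elastic equations and to the Schr\"odinger profile $\Psi^{(a)}$, using the null/commutator structure of $\mathcal{N}_{a,jik}$ and the $|\nabla|^{-1}$ smoothing in $h_a$.

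For scattering of $\psi$, the profile $\Psi^{(a)}=e^{it\Delta}Z^a\psi$ satisfies $\partial_t \Psi^{(a)}=-ie^{it\Delta}h_a$ by \eqref{sys-VF}. The core task is therefore to prove $\|h_a(t)\|_{H^{-1}}\lesssim \ep_0^2\langle t\rangle^{-1-\sigma}$ for some $\sigma>0$ and $|a|\leq N-4$; here the $|\nabla|^{-1}$ smoothing absorbs one derivative from each term in \eqref{ha}, while the convective pieces $u\cdot\nabla\psi$ and $A\cdot\nabla\psi$ are the critical ones and are estimated using the decay $\langle t\rangle^{-1}$ of $u,A$ against $\langle t\rangle^{-3/2}$-type decay of $\psi$, giving the $\langle t\rangle^{-4/3+\delta}$ rate advertised in the introduction. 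Cauchy integration in $H^{-1}$ then produces $\Psi^{(a)}_\infty\in H^{-1}$ with \eqref{scattering}. For the elastic part, I would diagonalize the linear $(u,G)$ system by passing to a complex half-wave unknown $\mathcal G^{(a)}(t):=Z^a G+i|\nabla|^{-1}\nabla(Z^a u)$ (using $\div u=0$ and $\nabla\cdot G^T=0$), so that $(\partial_t+i|\nabla|)\mathcal G^{(a)}=(\text{quadratic terms built from }f_a,g_a)$, then show the corresponding $L^2$-profile $e^{it|\nabla|}\mathcal G^{(a)}$ is Cauchy by integrating the $L^2$ norm of the nonlinearity against the established decay bounds, which produces $\mathcal G^{(a)}_\infty$ and \eqref{Scatter-Gre}.

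The main obstacle, as already flagged in the introduction, is the absence of viscosity and damping: every smoothing and every growth of energy must come from dispersion alone, so the interplay between the wave-type elastic subsystem and the Schr\"odinger subsystem must be handled in one coupled bootstrap. Concretely, the delicate point is that the top-order estimate $|a|=N$ sees the full nonlinear coupling between $\psi$ and $u,G$ where no obvious null structure exists; the cancellations above and the Coulomb gauge choice $\div A=0$ must compensate for this, and the inclusion of the mass term $\|Z^a(\phi-Q)\|_{L^2}$ inside $E_j$ is exactly what enables the sharp $H^{-1}$-scattering of $\psi$ and hence the $L^2$-scattering in \eqref{scattering0} after returning to the map $\phi$ via \eqref{phi-v-w}.
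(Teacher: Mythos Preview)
Your proposal is essentially the paper's own approach: bootstrap on $E_N,E_{N-2},X,Y$, close the energies via the two cancellations and the compatibility \eqref{comm}, recover the weighted norms from Klainerman--Sideris and the profile identity, and prove scattering by showing the half-wave profile $e^{it|\nabla|}\mathbf G^{(a)}$ and the Schr\"odinger profile $e^{it\Delta}Z^a\psi$ are Cauchy. Two small corrections are worth noting so the execution does not stall: the $\psi$-decay is \emph{not} Klainerman--Sobolev but the Schr\"odinger-specific estimate $\|\nabla f\|_{L^6}\lesssim t^{-1}\|(x\cdot\nabla-2it\Delta,\Omega)f\|_{L^2}$, which is precisely why the profile norm $Y_j$ (and the mass piece $\||\nabla|^{-1}Z^a\psi\|_{L^2}$) enters, and it only yields $\|Z^{N-3}\psi\|_{L^6}\lesssim\ep_1\langle t\rangle^{-1}$ rather than $\langle t\rangle^{-3/2}$; and for $E_N$ the paper does not run Gr\"onwall on $\frac{d}{dt}E_N\lesssim \ep_0\langle t\rangle^{-1}E_N$ but instead substitutes the bootstrap bound into the right-hand side to obtain $\frac{d}{dt}E_N\lesssim \ep_1^3\langle t\rangle^{-1+2\delta}$ and integrates directly (a literal Gr\"onwall with the actual $\langle t\rangle^{-1+2\delta}$ rate would blow up). Also, when you diagonalize the elastic system you will need not only $\div u=0$ and $\nabla\cdot G^T=0$ but the curl constraint in \eqref{curl-vf} to turn $-|\nabla|^{-1}\partial_j\partial_k Z^aG_{ik}$ into $|\nabla|Z^aG_{ij}$ modulo the quadratic correction $\mathcal N_{a}$.
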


The above result is obtained by the following proposition and continuity method.

\begin{prop}[Bootstrap Proposition]\label{Main_Prop}
    Let $\sqrt{\ep_0}\leq \de\leq 1/8$. Assume that $(u,G,\psi)$ is a solution to \eqref{sys-2}-\eqref{constraints-re} on some time interval $[0,T]$, $T\geq 1$ with initial data satisfying the assumption \eqref{Main-ini}.
    Assume also that the solution $(u,G,\psi)$ and the profile $\Psi^{(a)}=e^{it\De}Z^a\psi$ satisfy the bootstrap hypothesis
    \begin{gather}           \label{Main_Prop_Ass1}
        E_{N}^{1/2}\leq \ep_1\<t\>^{\de}\,,\qquad E_{N-2}^{1/2}\leq \ep_1\,,\\\label{Main_Prop_Ass2}
        X^{1/2}_N+Y^{1/2}_{N}\leq C_w\ep_1\<t\>^{\de}\,,\qquad X^{1/2}_{N-2}+Y^{1/2}_{N-2}\leq C_w\ep_1\,,
    \end{gather}
    where $\ep_1=C_2C_1\ep_0$ and $C_2,\ C_w$ are universal constants.
    Then the following improved bounds hold
    \begin{gather}\label{Main_Prop_result1}
    E_{N}^{1/2}\leq \frac{\ep_1}{2}\<t\>^{\de}\,,\qquad E_{N-2}^{1/2}\leq \frac{\ep_1}{2}\,,\\ \label{Main_Prop_result2}
    X^{1/2}_N+Y^{1/2}_{N}\leq \frac{C_w\ep_1}{2}\<t\>^{\de}\,,\qquad X^{1/2}_{N-2}+Y^{1/2}_{N-2}\leq \frac{C_w\ep_1}{2}\,.
    \end{gather}
\end{prop}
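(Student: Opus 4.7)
I would prove the bootstrap by combining Klainerman--Sobolev decay for the linear elastic and Schr\"odinger parts with a classical $L^2$ energy identity and Klainerman--Sideris type weighted estimates. From \eqref{Main_Prop_Ass1}--\eqref{Main_Prop_Ass2} and the decays of Section~\ref{sec-dec}, the inputs are pointwise bounds of the schematic form $\|Z^{\le N/2+2}(u,G)\|_{L^\infty}\lesssim \ep_1\langle t\rangle^{-1+C\de}$ (wave-type) and $\|Z^{\le N/2+2}\psi\|_{L^\infty}\lesssim \ep_1\langle t\rangle^{-3/2+C\de}$ (Schr\"odinger-type), with companion bounds for $A$, $A_{d+1}$, $v-v_\infty$, $w-w_\infty$ coming from the elliptic system \eqref{A,Ad+1}. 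As emphasized in the overview, the mass piece $\|Z^a(\phi-Q)\|_{L^2}$ built into $E_j$ is exactly what supplies the extra $\langle t\rangle^{-1/2}$ decay needed to make the Schr\"odinger forcing integrable in time.

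\textbf{Closing the energies.} After applying $Z^a$ to \eqref{sys-2} one has \eqref{sys-VF}. Pair $Z^a u$ with the first equation and $Z^a G$ with the second and add: the pressure drops by $\div Z^a u=0$ from \eqref{curl-vf}, and $\langle Z^a u,\nabla\cdot Z^a G\rangle + \langle Z^a G,\nabla Z^a u\rangle=0$ after integration by parts. For the Schr\"odinger component, test the third line of \eqref{sys-VF} against $\overline{Z^a\psi}$ and take imaginary parts; since $u-2A$ is real and divergence-free (by $\div u=0$ and Coulomb gauge), the drift contributes nothing, leaving only the $h_a$-forcing. A parallel identity obtained by testing against $|\nabla|^{-2}\overline{Z^a\psi}$ controls the mass part $\||\nabla|^{-1}Z^a\psi\|_{L^2}$. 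Using H\"older with the pointwise decays above, the nonlinearities $f_a$, $g_a$, $h_a$ satisfy $L^2$ bounds of the schematic form
\[
\tfrac{d}{dt}E_N \lesssim \ep_1^2\langle t\rangle^{-1+C\de} E_N^{1/2},\qquad \tfrac{d}{dt}E_{N-2}\lesssim \ep_1^2\langle t\rangle^{-1-\eta}E_{N-2}^{1/2},
\]
for some $\eta>0$, and Gronwall yields \eqref{Main_Prop_result1} once $C_2$ is chosen sufficiently large and $\ep_0$ small.

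\textbf{Closing the weighted norms.} For $X_N$ I would use the Klainerman--Sideris identity to express $\langle t-r\rangle\nabla Z^a(u,G)$ in terms of $\tS Z^a(u,G)$, $\tOm Z^a(u,G)$ and $t$ times the right-hand side of the corresponding equation in \eqref{sys-VF}; the first two sit inside $E_{|a|+1}\le E_N$ for $|a|\le N-1$, and the forcing is handled by the estimates just obtained. For $Y_N$, differentiate the profile $\Psi^{(a)}=e^{it\De}Z^a\psi$ in time to get $\partial_t\Psi^{(a)}=-ie^{it\De}(h_a+\text{transport terms})$, so
\[
\|x\cdot\nabla|\nabla|^{-1}\Psi^{(a)}(t)\|_{L^2}\le \|x\cdot\nabla|\nabla|^{-1}\Psi^{(a)}(0)\|_{L^2}+\int_0^t \|x\cdot\nabla|\nabla|^{-1}e^{is\De}(\cdots)\|_{L^2}\,ds.
\]
Using the commutator relation $[x\cdot\nabla,e^{it\De}]=2it\De e^{it\De}$ to pull the $x$-weight inside, the integrand is bounded by $\langle s\rangle$ times the $L^2$ norm of the forcing plus one extra vector field; by the decays this is $\lesssim \ep_1^2\langle s\rangle^{-1+C\de}$, integrating to $\tfrac12 C_w\ep_1\langle t\rangle^\de$.

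\textbf{Main obstacle.} The delicate step is the top-order case $|a|=N$ of $h_a$: both the term $-i(\nabla Z^a u)\cdot\psi$ arising from differentiating $-i\d_m u\cdot\psi$ and the elliptic recovery of $Z^a A$, $Z^a A_{d+1}$ from \eqref{Ell-A}--\eqref{Ell-Ad1} are naively half a derivative beyond what $E_N$ controls. The first is defused by the Schr\"odinger--hydrodynamic cancellation
\[
\langle u,\d_j\Re(\psi\overline{\psi_j})\rangle + \langle \psi_j,\Re(\d_j u\cdot\bar\psi)\rangle = 0
\]
recorded in the overview: performed after integration by parts, this top-order piece recombines with the matching $-\d_j\Re(\psi\overline{\psi_j})$ term in $\tfrac{d}{dt}\|Z^a u\|_{L^2}^2$ and cancels. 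The second is handled by always putting at most one factor at top order in \eqref{Ell-A}--\eqref{Ell-Ad1} and the other in the low-energy regime $E_{[N/2]+1}$ with Schr\"odinger-type decay, then invoking the Calder\'on--Zygmund bound for $\De^{-1}\d\d$. These two cancellations, together with the mass-enhanced Schr\"odinger decay of $\psi$, are what allow the bootstrap to close; without either one, the wave and Schr\"odinger time scales (which disagree by a factor of $\langle t\rangle^{1/2}$) would prevent the integrability of the quadratic coupling.
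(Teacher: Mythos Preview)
Your energy‐estimate sketch and your treatment of $X_N$ via a Klainerman--Sideris identity are in line with the paper's Propositions~\ref{Prop_va} and~\ref{XX-prop}, and you have correctly identified the top‐order cancellation between $-\d_j\Re(\psi\overline{\psi_j})$ in the $u$‐equation and $-i\d_m u\cdot\psi$ in the $\psi$‐equation. (One small inaccuracy: the drift $i(u-2A)\cdot\nabla Z^a\psi$ only vanishes when tested against $\overline{Z^a\psi}$; when tested against $|\nabla|^{-2}\overline{Z^a\psi}$ for the mass part it does not, and one instead writes $u\cdot\nabla Z^c\psi=\nabla\cdot(uZ^c\psi)$ and uses $\||\nabla|^{-1}\nabla\cdot(\cdot)\|_{L^2}\le\|\cdot\|_{L^2}$, as in the $J_1$ step of Section~\ref{sec-HighEnergy}.)

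The genuine gap is in your $Y_N$ argument. Your time‐integration scheme
\[
\|x\cdot\nabla|\nabla|^{-1}\Psi^{(a)}(t)\|_{L^2}\le \|\cdot\|_{L^2}\big|_{t=0}+\int_0^t \|x\cdot\nabla|\nabla|^{-1}e^{is\De}h_a\|_{L^2}\,ds
\]
together with $[x\cdot\nabla,e^{is\De}]=-2is\De\,e^{is\De}$ produces an integrand of size $\|x\cdot\nabla|\nabla|^{-1}h_a\|_{L^2}+2s\||\nabla|h_a\|_{L^2}$. The second term is the problem: for $|a|=N-1$ the factor $|\nabla|h_a$ already sits at $N+1$ vector fields (e.g.\ $\nabla^2 Z^{N-1}\psi$), and even if one ignores that loss, the available bound is only $\||\nabla|^{-1}h_a\|_{L^2}\lesssim\ep_1^2\langle s\rangle^{-1+\de}$, so $s\cdot(\cdots)\sim\ep_1^2\langle s\rangle^{\de}$ and $\int_0^t\sim\ep_1^2\langle t\rangle^{1+\de}$, a full power of $\langle t\rangle$ too large. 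Your claimed integrand size $\ep_1^2\langle s\rangle^{-1+C\de}$ would require $\||\nabla|h_a\|_{L^2}\lesssim\ep_1^2\langle s\rangle^{-2+C\de}$, which is not available.

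The paper avoids the time integral entirely. Since $S=2t\d_t+x\cdot\nabla$ commutes with $e^{it\De}$ and $S$ is one of the $Z$'s, one has the \emph{pointwise} inequality \eqref{keyIneq}:
\[
\|x\cdot\nabla|\nabla|^{-1}\Psi^{(a)}\|_{L^2}\le \|S|\nabla|^{-1}Z^a\psi\|_{L^2}+2t\|\d_t|\nabla|^{-1}\Psi^{(a)}\|_{L^2}\le 2E_{|a|+1}^{1/2}+2t\||\nabla|^{-1}h_a\|_{L^2}.
\]
Now the single factor of $t$ is compensated by $\||\nabla|^{-1}h_a\|_{L^2}\lesssim\ep_1^2\langle t\rangle^{-1+\de}$ (proved term by term in Proposition~\ref{Prop_Psia}, using the compatibility \eqref{comm} to rewrite $\nabla Z^b u\cdot Z^c\psi$ in divergence form), giving $Y_N^{1/2}\le 2E_N^{1/2}+C\ep_1^2\langle t\rangle^{\de}$ directly. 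This algebraic use of the scaling vector field, rather than a Duhamel‐type time integral, is the missing idea in your proposal.
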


In the remaining sections, we focus on the proof of the Proposition \ref{Main_Prop}. Then we provide the proof of Theorem \ref{Ori_thm} and \ref{Ori_thm0}.

\medskip
\section{Decay estimates}\label{sec-dec}

In this section, we provide some weighted estimates and decay estimates, which will be frequently used in the later sections. First we state the weighted $L^{\infty}\mbox{-}L^2$ estimates and the bounds of $\<t\>f$, which is to prove the decays of elastic equations. Second, we review the basic decay estimate of Schr\"odinger equation. Then by bootstrap assumptions \eqref{Main_Prop_Ass1}-\eqref{Main_Prop_Ass2}, we get the decays of differential fields $\psi$, and hence the decay estimates of $A$, $A_{d+1}$, $v-v_\infty$ and $w-w_\infty$.
Here we start with the following weighted $L^{\infty}\mbox{-}L^2$ estimates.
\begin{lemma}
	Let $f\in H^2(\R^3)$, then there hold
	\begin{align} \label{ru}
		&\<r\>|f(x)|\lesssim \sum_{|\al|\leq 1}\|\d_r\Om^{\al}f\|_{L^2}^{1/2}\sum_{|\al|\leq 2}\|\Om^{\al}f\|_{L^2}^{1/2},\\\label{decay-1}
		&\<t\>\|f\|_{\lf(r<2\<t\>/3)}\les \|f\|_{L^2}+\|\<t-r\>\nab f\|_{L^2}+\|\<t-r\>\nab^2 f\|_{L^2}.
	\end{align}
	In particular, let $\om=x/|x|$, assume that $\div g=0$ for a vector $g=(g_1,g_2,g_3)$, then
	\begin{equation}  \label{omf}
		\|r^{3/2}(\om\cdot g)\|_{L^\infty}\lesssim \sum_{|\alpha|\leq 2}\lV \Om^{\alpha}g\rV_{L^2}.
	\end{equation}
\end{lemma}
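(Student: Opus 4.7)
All three bounds are weighted Klainerman--Sobolev inequalities combining an angular Sobolev embedding on $S^2$ with a radial Hardy identity, and I would treat them in order.

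For \eqref{ru}, note that $[\partial_r,\Omega]=0$, so for each multi-index $\alpha$ integrating
$\partial_s(s^2|\Omega^\alpha f|^2)=2s|\Omega^\alpha f|^2+2s^2(\Omega^\alpha f)(\partial_s\Omega^\alpha f)$
from $r$ to $\infty$ and using positivity of the first term (together with $s^2|\Omega^\alpha f|^2 \to 0$ at infinity for $f\in L^2$) gives the radial Hardy estimate
\[
 r^2\|\Omega^\alpha f(r,\cdot)\|_{L^2(S^2)}^2 \leq 2\|\Omega^\alpha f\|_{L^2(\R^3)}\|\partial_r\Omega^\alpha f\|_{L^2(\R^3)}.
\]
An Agmon-type embedding on $S^2$ pointwise-bounds $|f(r,\omega)|^2$ by a product of an $L^2(S^2)$ norm of $f$ and one of $\Omega^{\leq 2}f$; applying the radial Hardy estimate to each factor and taking a square root produces the asymmetric derivative count ($|\alpha|\leq 1$ on $\partial_r\Omega^\alpha f$ versus $|\alpha|\leq 2$ on $\Omega^\alpha f$) stated in \eqref{ru}. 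The case $r\leq 1$ reduces to the standard embedding $H^2(\R^3)\hookrightarrow L^\infty$, which explains the weight $\langle r\rangle$ in place of $r$.

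For \eqref{decay-1}, I would localize by introducing a smooth cutoff $\chi$ identically $1$ on $\{r<2\langle t\rangle/3\}$ and supported in $\{r<3\langle t\rangle/4\}$, with $|\nabla^k\chi|\lesssim\langle t\rangle^{-k}$. On the support of $\chi$ one has $\langle t-r\rangle\gtrsim\langle t\rangle$, so every derivative falling on $f$ can be multiplied by the harmless factor $\langle t-r\rangle/\langle t\rangle\lesssim 1$. Applying $H^2(\R^3)\hookrightarrow L^\infty$ to $\chi f$, expanding by the product rule, and trading the $\langle t\rangle^{-1}$ created by each derivative of $\chi$ against the $\langle t\rangle$ on the left side, immediately produces the three terms on the right-hand side of \eqref{decay-1}.

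For \eqref{omf} the new ingredient is the divergence-free structure. In spherical coordinates
\[
 0 = \div g = \tfrac{1}{r^2}\partial_r(r^2\,\omega\cdot g)+\tfrac{1}{r}\,\mathrm{div}_{S^2}(\Pi_\omega g), \qquad \Pi_\omega g := g-(\omega\cdot g)\omega,
\]
so integrating from $0$ to $r$ (the boundary term vanishes by smoothness of $g$ at the origin) gives the pivotal identity
\[
 r^2\,\omega\cdot g(r,\omega) = -\int_0^r s\,\mathrm{div}_{S^2}(\Pi_\omega g)(s,\omega)\,ds,
\]
which encodes the extra decay of the normal component. Taking $L^2(S^2)$ by Minkowski's integral inequality and then Cauchy--Schwarz in $s$, together with the observation that $\mathrm{div}_{S^2}(\Pi_\omega g)$ is a bounded linear combination of angular derivatives of the components of $g$, yields $r^{3/2}\|\omega\cdot g(r,\cdot)\|_{L^2(S^2)}\lesssim \|\Omega g\|_{L^2(\R^3)}$. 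A final Sobolev step on $S^2$ upgrades $L^2(S^2)$ to $L^\infty(S^2)$ at the cost of one additional angular derivative on the right, and after summing this is exactly \eqref{omf} with the stated count $|\alpha|\leq 2$.

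\emph{Principal obstacle.} The delicate point lies in \eqref{omf}: a naive composition of Sobolev on $S^2$ (two derivatives) with the $\mathrm{div}_{S^2}$ representation (one more) would only give $|\alpha|\leq 3$. The key is to apply the divergence-free identity \emph{before} the angular Sobolev step, so that one derivative is absorbed by the radial integration from $0$ to $r$ and the total count is kept at two. Matching the asymmetric $|\alpha|\leq 1$ versus $|\alpha|\leq 2$ split in \eqref{ru} similarly requires the Agmon (rather than the $H^2$) version of the sphere embedding; apart from this the estimates are routine.
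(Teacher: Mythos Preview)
Your proposal is correct and in fact considerably more detailed than the paper's own treatment: the paper does not prove this lemma at all but simply cites Sideris \cite{Si} for \eqref{ru}, Cai--Wang \cite{CW} for \eqref{decay-1}, and Lei--Wang \cite{LeiWang} for \eqref{omf}. The arguments you outline---radial Hardy plus an Agmon-type sphere embedding for \eqref{ru}, a cutoff exploiting $\langle t-r\rangle\gtrsim\langle t\rangle$ on the inner region for \eqref{decay-1}, and the spherical-divergence representation $r^2\,\omega\cdot g=-\int_0^r s\,\mathrm{div}_{S^2}(\Pi_\omega g)\,ds$ followed by angular Sobolev for \eqref{omf}---are exactly the standard proofs found in those references, so there is no genuine difference in approach to compare.
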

\begin{proof}
 For \eqref{ru} and \eqref{decay-1}, one please refers to  \cite[Lemma 3.3]{Si} and \cite[Lemma 2.2]{CW}. For the last estimate \eqref{omf}, we can refer to \cite[Lemma 4.4]{LeiWang}.
\end{proof}

\begin{lemma} \label{Decay-phi}
	Assume that $(u,G,\psi)$ is the solution of \eqref{sys-2}. Then there hold
	\begin{align}  \label{AwayCone}
		&\<r\>| Z^a u|+\<r\>| Z^a G|\lesssim E^{1/2}_{|a|+2}\,,\\  \label{omuH}
		&\<r\>^{3/2}|\om\cdot Z^a u|+\<r\>^{3/2}|\om\cdot Z^a G^T|\les E^{1/2}_{|a|+2}\,,\\ \label{Decay-phi2}
        &\|Z^{a} u\|_{\lf}+\| Z^{a} G\|_{\lf}\les \<t\>^{-1}(E^{1/2}_{|a|+2}+X^{1/2}_{|a|+2})\,.
	\end{align}
\end{lemma}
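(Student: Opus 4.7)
The three assertions correspond one-to-one with the three pointwise weighted estimates of the preceding lemma: \eqref{AwayCone} will follow from \eqref{ru}, \eqref{omuH} from \eqref{omf}, and \eqref{Decay-phi2} from \eqref{decay-1}. The plan is to apply each of these to the scalar components of $Z^a u$ and $Z^a G$, and then reconcile the unperturbed rotation $\Om$ appearing on the right-hand sides with the perturbed rotation $\tilde\Om$ built into our vector fields $Z$.

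This reconciliation is really the only cross-cutting step, and is the main (minor) obstacle. Since $\tilde\Om_i-\Om_i$ acts on $u$ by the fixed matrix $M_i$ and on $G$ by the fixed commutator $[M_i,\cdot]$, repeated application of $\Om$ to $Z^a u$ or $Z^a G$ produces a linear combination of $\tilde\Om^\be Z^a$ of the same unknown with $|\be|\leq|\al|$, at no extra derivative cost. Consequently $\|\Om^\al Z^a u\|_{L^2}+\|\Om^\al Z^a G\|_{L^2}\les E_{|a|+|\al|}^{1/2}$; and since $[\d_j,\Om_i]$ is again a translation, which lies in $Z$, one also gets $\|\d_r \Om^\al Z^a u\|_{L^2}+\|\d_r \Om^\al Z^a G\|_{L^2}\les E_{|a|+|\al|+1}^{1/2}$. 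Plugging these bounds into \eqref{ru} with $|\al|\leq 2$ and $|\al|\leq 1$ respectively yields \eqref{AwayCone}.

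For \eqref{omuH}, the divergence-free hypothesis of \eqref{omf} is exactly what the commuted constraints \eqref{curl-vf} supply: $\div Z^a u=0$ and $\nab\cdot (Z^a G)^T=0$, the latter saying each column of $Z^a G$ is divergence free. I apply \eqref{omf} once with $g=Z^a u$, and once, column by column, with $g$ equal to a column of $Z^a G$, and then invoke the same $\Om\to\tilde\Om$ conversion to bound everything by $E^{1/2}_{|a|+2}$.

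For \eqref{Decay-phi2} I split space into $\{r\geq 2\<t\>/3\}$ and $\{r<2\<t\>/3\}$. On the far region, \eqref{AwayCone} immediately gives $|Z^a u|+|Z^a G|\les \<r\>^{-1}E_{|a|+2}^{1/2}\les \<t\>^{-1}E_{|a|+2}^{1/2}$. On the near region I apply \eqref{decay-1} componentwise; since $\nab$ is itself one of the $Z$'s, both $\|\<t-r\>\nab Z^a (u,G)\|_{L^2}$ and $\|\<t-r\>\nab^2 Z^a(u,G)\|_{L^2}$ are controlled by $X^{1/2}_{|a|+2}$ via the definition \eqref{XX}, while the unweighted $L^2$ term is bounded by $E^{1/2}_{|a|+2}$. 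Combining the two regions produces the desired $\<t\>^{-1}\bigl(E^{1/2}_{|a|+2}+X^{1/2}_{|a|+2}\bigr)$ estimate. No step is substantial beyond the $\Om/\tilde\Om$ book-keeping described above.
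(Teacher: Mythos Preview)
Your proof is correct and follows the same approach as the paper's own argument: \eqref{AwayCone} from \eqref{ru}, \eqref{omuH} from the divergence-free constraints \eqref{curl-vf} together with \eqref{omf}, and \eqref{Decay-phi2} by combining \eqref{ru} on $\{r\geq 2\<t\>/3\}$ with \eqref{decay-1} on $\{r<2\<t\>/3\}$. Your explicit treatment of the $\Om$ versus $\tilde\Om$ bookkeeping is a welcome elaboration of a point the paper leaves implicit.
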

\begin{proof}
The first bound follows from \eqref{ru}. The second one is obtained by $\div Z^au=0$, $\nab\cdot Z^a G^T=0$ and the bound \eqref{omf}. The last bound \eqref{Decay-phi2} follows from \eqref{decay-1} and \eqref{ru}.
\end{proof}

Next, we turn our attention to the decays of Schr\"odinger map flow.
Here we start with the basic decay estimates.

\begin{lemma}[\cite{HaMiNa99}, Lemma 2.4(1)]
    For any Schwartz function $f\in \mathcal{S}(\R^3)$ and $t>1$, we have
    \begin{align}   \label{df-decay}
        &\|\nab f\|_{L^6}\lesssim t^{-1}\| \Theta f\|_{L^2}\,.
    \end{align}
    where $\Theta =(x\c\nab-2it\De,\Om)$.
\end{lemma}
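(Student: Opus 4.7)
\bigskip

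\noindent\textbf{Proof proposal.} The natural plan is to pass to the profile and exploit the well-known factorization of the Schrödinger semigroup. Set $g := e^{it\Delta} f$, so that $f = e^{-it\Delta}g$. A direct computation on the Fourier side shows that $x\cdot\nabla - 2it\De$ is the conjugate of $x\cdot\nabla$ by the free evolution and that $\Omega$ commutes with $e^{-it\Delta}$; more precisely,
\begin{equation*}
(x\cdot\nabla - 2it\De) f = e^{-it\De}(x\cdot\nabla g), \qquad \Omega f = e^{-it\De}(\Omega g).
\end{equation*}
Since $e^{-it\De}$ is an $L^2$ isometry, this reduces the estimate to proving $\|e^{-it\De}\nabla g\|_{L^6} \lesssim t^{-1}(\|x\cdot\nabla g\|_{L^2} + \|\Omega g\|_{L^2})$.

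The next step is the classical factorization $e^{-it\De} = \mathcal{M}_t \mathcal{D}_t \mathcal{F} \mathcal{M}_t$, where $\mathcal{M}_t h(y) = e^{i|y|^2/(4t)} h(y)$ is a unimodular multiplier, $\mathcal{D}_t$ is the rescaling operator by $2t$, and $\mathcal{F}$ is the Fourier transform. This factorization yields for any $h$ the pointwise bound $|e^{-it\De} h(x)| \lesssim t^{-3/2}|\widehat{\mathcal{M}_t h}(x/(2t))|$, and after a change of variables
\begin{equation*}
\|e^{-it\De} h\|_{L^6_x} \lesssim t^{-1} \|\widehat{\mathcal{M}_t h}\|_{L^6_\xi}.
\end{equation*}
Applying this componentwise to $h = \partial_j g$ and using the three-dimensional Sobolev embedding $\|\cdot\|_{L^6} \lesssim \|\nabla\cdot\|_{L^2}$ together with $\nabla_\xi \widehat{\mathcal{M}_t h} = -i\widehat{y\, \mathcal{M}_t h}$ and $|\mathcal{M}_t| \equiv 1$, one obtains
\begin{equation*}
\|\partial_j f\|_{L^6} \lesssim t^{-1} \|y\,\partial_j g\|_{L^2}.
\end{equation*}

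Finally, one must control the tensor $y_i \partial_j g$ by the radial derivative $y\cdot\nabla g$ and the angular derivatives $\Omega g$. The algebraic identity
\begin{equation*}
|y|^2 \partial_m g = y_m (y\cdot\nabla g) + \varepsilon_{klm}\, y_l\, \Omega_k g
\end{equation*}
(obtained by contracting $\Omega_k = \varepsilon_{kij} y_i \partial_j$ with $\varepsilon_{klm} y_l$) immediately gives the pointwise bound $|y_i \partial_m g| \lesssim |y\cdot\nabla g| + |\Omega g|$, since the coefficients $y_i y_m/|y|^2$ and $y_i y_l/|y|^2$ are bounded. Taking $L^2$ norms and combining with the previous step yields the claim.

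The only subtle point is the decomposition of $y_i\partial_j g$ in the last step: one must avoid producing a singular coefficient at the origin, which is why the contraction with the full $\Omega$ vector (rather than a componentwise splitting) is essential. The remaining steps are routine consequences of the factorization and Sobolev embedding.
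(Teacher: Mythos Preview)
Your argument is correct and is essentially the same as the paper's, though the packaging differs slightly. The paper works directly on $f$: it multiplies $\nabla f$ by the unimodular factor $e^{i|x|^2/(4t)}$, applies the Sobolev embedding $\dot H^1\hookrightarrow L^6$, and observes that $\nabla\big(e^{i|x|^2/(4t)}\nabla f\big)=\tfrac{i}{2t}e^{i|x|^2/(4t)}(x_j-2it\partial_j)\partial_k f$, which is exactly your $\mathcal M_t$ step without introducing the profile $g$ or the full $\mathcal M_t\mathcal D_t\mathcal F\mathcal M_t$ factorization. For the final tensor-to-trace reduction the paper invokes the exact $L^2$ identity
\[
\sum_{j,k}\|(x_j-2it\partial_j)\partial_k f\|_{L^2}^2=\|(x\cdot\nabla-2it\Delta)f\|_{L^2}^2+\sum_{j,k}\|\Omega_{jk}f\|_{L^2}^2,
\]
cited from \cite{Ha}, whereas you give the equivalent statement on the profile side via the pointwise algebraic identity $|y|^2\partial_m g=y_m(y\cdot\nabla g)+\varepsilon_{klm}y_l\Omega_k g$. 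Your version is self-contained and avoids the integration-by-parts computation behind the cited identity; the paper's version is shorter because it never leaves the physical-space variable $f$. Either way the content is the same.
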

\begin{proof}
	By Sobolev embedding, we have
	\begin{align*}
		\|\nab f\|_{L^6}=\|e^{i\frac{x^2}{4t}}\nab f\|_{L^6}\les \|\nab (e^{i\frac{x^2}{4t}}\nab f)\|_{L^2}\les \sum_{j,k} \frac{1}{t}\|(x_j-2it\d_j)\d_k f\|_{L^2}\,.
	\end{align*}
Noting that by integration by parts, there holds 
\begin{align*}
	\sum_{j,k} \|(x_j-2it\d_j)\d_k f\|_{L^2}^2=\|(x\cdot\nab-2it\De)f\|_{L^2}^2+\sum_{j,k}\|\Om_{jk} f\|_{L^2}^2\,,
\end{align*}
which has been proved in \cite[Lemma 2.2]{Ha}. Hence, the bound \eqref{df-decay} follows.
\end{proof}

We then use this estimate to give the decays of differential fields $\psi$.
\begin{lemma}[Decays of $\psi$]  \label{Dec_psi-Lem}
    With the notations and hypothesis in Proposition \ref{Main_Prop}, for any $t\in[0,T]$ we have
	\begin{gather}     \label{Dec_psi}
	\lV Z^{N-1}\psi(t)\rV_{L^6} \lesssim \ep_1\<t\>^{-1+\de},\quad\qquad 
 \lV Z^{N-3}\psi(t)\rV_{L^6} \lesssim \ep_1\<t\>^{-1}\,.
	\end{gather}
\end{lemma}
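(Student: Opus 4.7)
The plan is to combine the dispersive $L^6$ estimate \eqref{df-decay} with the bootstrap control on the weighted profile norm $Y_j$ and the generalized energy $E_j$. The heuristic is that writing $Z^a\psi = e^{-it\De}\Psi^{(a)}$ transforms the operator $\Theta = (x\c\nab - 2it\De, \Om)$ into operators acting on the profile $\Psi^{(a)}$, and these are precisely the quantities controlled by $Y_{|a|+1}$ and $E_{|a|+1}$.

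First I would apply \eqref{df-decay} to $|\nab|^{-1}Z^a\psi$ to obtain
\[
\|Z^a\psi\|_{L^6} = \|\nab |\nab|^{-1}Z^a\psi\|_{L^6} \lesssim t^{-1}\bigl(\|(x\c\nab - 2it\De)|\nab|^{-1}Z^a\psi\|_{L^2} + \|\Om|\nab|^{-1}Z^a\psi\|_{L^2}\bigr).
\]
The rotation piece is immediate: since $\Om$ commutes with $|\nab|^{-1}$ and differs from the perturbed $\tilde\Om$ only by the bounded matrix multiplier $M_i\c$, this term is controlled by $\||\nab|^{-1}\tilde\Om Z^a\psi\|_{L^2} + \||\nab|^{-1}Z^a\psi\|_{L^2} \lesssim E_{|a|+1}^{1/2}$.

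The scaling piece is the main content. I would establish the algebraic identity
\[
(x\c\nab - 2it\De)\,e^{-it\De}h = e^{-it\De}(x\c\nab h),
\]
proved by checking that both sides agree at $t=0$ and that both satisfy $\d_t F = -i\De F$, using the commutator $[\De, x\c\nab] = 2\De$. Applied to $h = |\nab|^{-1}\Psi^{(a)}$, together with the fact that $|\nab|^{-1}$ commutes with $e^{-it\De}$ and that $e^{-it\De}$ is an $L^2$-isometry, this yields the clean identification
\[
\|(x\c\nab - 2it\De)|\nab|^{-1}Z^a\psi\|_{L^2} = \|x\c\nab|\nab|^{-1}\Psi^{(a)}\|_{L^2} \lesssim Y_{|a|+1}^{1/2},
\]
which matches exactly the definition \eqref{YY} of $Y_j$.

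Inserting the bootstrap assumptions \eqref{Main_Prop_Ass1}--\eqref{Main_Prop_Ass2} at levels $N$ and $N-2$ into $\|Z^a\psi\|_{L^6} \lesssim t^{-1}(E_{|a|+1}^{1/2} + Y_{|a|+1}^{1/2})$ delivers \eqref{Dec_psi} for $t \geq 1$, choosing $|a|\leq N-1$ for the first bound and $|a|\leq N-3$ for the second (so that the effective order $|a|+1$ falls in $\{N,N-2\}$). The short-time regime $0 \leq t \leq 1$ is handled separately by Sobolev embedding $\dot H^1(\R^3) \hookrightarrow L^6(\R^3)$ via $\|Z^a\psi\|_{L^6} \lesssim \|\nab Z^a\psi\|_{L^2} \lesssim E_{|a|+1}^{1/2} \lesssim \ep_1$, which absorbs the bounded weights $\<t\>^{-1+\de}, \<t\>^{-1}$. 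I do not expect a significant obstacle; the most delicate point is recognizing that the scaling identity above is a purely algebraic relation for the free propagator (tested at fixed time against $\Psi^{(a)}$), so no error terms from the Schr\"odinger nonlinearity in \eqref{sys-2} enter this particular estimate — the nonlinear dynamics are encoded entirely inside the growth rate of $Y_{|a|+1}^{1/2}$ itself.
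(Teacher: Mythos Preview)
Your proposal is correct and follows essentially the same approach as the paper: apply \eqref{df-decay} to $|\nab|^{-1}Z^a\psi$, use the commutator identity $(x\cdot\nab-2it\De)e^{-it\De}=e^{-it\De}(x\cdot\nab)$ to convert the scaling piece into $Y^{1/2}_{|a|+1}$, bound the rotation piece by $E^{1/2}_{|a|+1}$, and close with the bootstrap assumptions. Your treatment is in fact slightly more thorough than the paper's, since you spell out the verification of the commutator identity and explicitly dispose of the regime $t\leq 1$ via Sobolev embedding.
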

\begin{proof}
    By \eqref{df-decay}, we have
    \begin{align*}
    \|Z^a\psi\|_{L^6}&\les \|\nab |\nab|^{-1}Z^a\psi\|_{L^6}
    \les t^{-1} \big(\|(x\c\nab-2it\De)|\nab|^{-1} Z^a\psi\|_{L^2}+\|\Om |\nab|^{-1}Z^a\psi\|_{L^2} \big)\,.
    \end{align*}
    In view of the relation $\Psi^{(a)}=e^{it\De}Z^a\psi$ and the commutator
    $(x\cdot\nab-2it\De)e^{-it\De}=e^{-it\De}(x\cdot\nab)$,
    we have for $t>1$
    \begin{align*}
        \|   Z^a\psi\|_{L^6}
        \lesssim &\  t^{-1} (\| e^{-it\De}( x\cdot \nab |\nab|^{-1}\Psi^{(a)})\|_{L^2}+E^{1/2}_{|a|+1})
        = t^{-1}(Y^{1/2}_{|a|+1}+E^{1/2}_{|a|+1})\,.
    \end{align*}
    Then the bounds in \eqref{Dec_psi} follow by \eqref{Main_Prop_Ass1} and \eqref{Main_Prop_Ass2}.
\end{proof}

Since the connections $A$ and $A_{d+1}$ satisfy some elliptic equations depending on $\psi$ and velocity $u$, then we can prove the following decays.
\begin{cor}[Decays of $A$ and $A_{d+1}$]  \label{Dec_A_Cor}
    With the notations and hypothesis in Proposition \ref{Main_Prop}, for the elliptic equations \eqref{Ell-A}, \eqref{Ell-Ad1} and any $t\in[0,T]$, we have
	\begin{align}  \label{Dec_d1/2-A}
    \| Z^N A\|_{L^2}&\lesssim \ep_1^{2}\<t\>^{-1/2+\de}\,,\\\label{Dec_dA}
    \| Z^{N-1} A\|_{L^\infty}&\lesssim \ep_1^2\<t\>^{-2+2\de}\,,\\   \label{Dec_Ad+1-L2}
    \| Z^N A_{d+1}\|_{L^2}&\lesssim \ep_1^2\<t\>^{-1+\de}\,,\\\label{Dec_Ad+1L3}
    \| Z^{N-2} A_{d+1}\|_{L^3\cap \lf}&\lesssim \ep_1^2\<t\>^{-2+\de}\,.
	\end{align}
\end{cor}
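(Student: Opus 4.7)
The plan is to treat all four estimates via a common template: invert the Laplacian in \eqref{Ell-A} and \eqref{Ell-Ad1} to represent $Z^a A$ and $Z^a A_{d+1}$ as singular-integral transforms of their bilinear-or-trilinear sources, and then control those sources in suitable Lebesgue norms using H\"older's inequality together with the decay estimates of Lemmas~\ref{Decay-phi}--\ref{Dec_psi-Lem} and Gagliardo--Nirenberg/Sobolev embeddings in $\R^3$.

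For \eqref{Dec_d1/2-A} I would write $Z^a A=|\nab|^{-1}R_l\bigl(\sum C_a^b\Im(Z^b\psi_l\overline{Z^c\psi})\bigr)$ and apply Hardy--Littlewood--Sobolev $|\nab|^{-1}\colon L^{6/5}\to L^2$; the source is then bounded in $L^{6/5}$ by $\|Z^b\psi\|_{L^3}\|Z^c\psi\|_{L^2}$, where the low-derivative factor gains $\ep_1\<t\>^{-1/2}$ in $L^3$ via Gagliardo--Nirenberg interpolation between $\|Z^b\psi\|_{L^2}\lesssim\ep_1$ and $\|Z^{N-3}\psi\|_{L^6}\lesssim\ep_1\<t\>^{-1}$ from \eqref{Dec_psi}, while the high-derivative factor gives at most $\ep_1\<t\>^\de$ from \eqref{Main_Prop_Ass1}. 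For the $L^\infty$ bound \eqref{Dec_dA} I would combine two $L^6$-estimates: Riesz boundedness plus HLS yield $\|\nab Z^{N-1}A\|_{L^6}\lesssim \|g\|_{L^6}$ and $\|Z^{N-1}A\|_{L^6}\lesssim \|g\|_{L^2}$ with $g$ the quadratic form in $\psi$; interpolating via $\|f\|_{L^\infty}\lesssim \|\nab f\|_{L^6}^{1/2}\|f\|_{L^6}^{1/2}$, and choosing H\"older splits so that $|b|,|c|\leq N-3$ whenever the derivative budget allows (so both factors carry the $\de$-free $L^6$-decay), gives the stated $\<t\>^{-2+2\de}$ rate.

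For $Z^a A_{d+1}$ the equation \eqref{Ell-Ad1} has a favourable structural feature: the quadratic $\psi$-terms already carry two derivatives outside, so $\De^{-1}\d_m\d_l$ and $\De^{-1}\De$ are zero-order Calder\'on--Zygmund operators, bounded on $L^p$ for every $1<p<\infty$. Consequently the $L^p$-estimate of the $\psi$-quadratic part of $Z^a A_{d+1}$ reduces directly to $\|Z^b\psi\cdot Z^c\psi\|_{L^p}$, while the cubic term $\De^{-1}\d_m(u\cdot\psi\cdot\bar\psi_m)$ is handled by HLS combined with the $L^\infty$-decay of $u$ from \eqref{Decay-phi2}. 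For \eqref{Dec_Ad+1-L2} a H\"older split $L^\infty\cdot L^2$, with $\|Z^{\lfloor N/2\rfloor}\psi\|_{L^\infty}$ controlled through Morrey's embedding applied to $\|Z^{\lfloor N/2\rfloor+1}\psi\|_{L^6}\lesssim \ep_1\<t\>^{-1}$ (valid since $\lfloor N/2\rfloor+1\leq N-3$ for $N\geq 9$), yields $\ep_1^2\<t\>^{-1+\de}$; for \eqref{Dec_Ad+1L3} the lower derivative count $N-2$ permits a symmetric split where both factors enjoy the $\de$-free $L^6$-decay $\ep_1\<t\>^{-1}$, producing the sharper rate $\<t\>^{-2+\de}$.

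The principal technical obstacle is the $L^\infty$ bound \eqref{Dec_dA}: since $|\nab|^{-1}$ does not send any $L^p$ into $L^\infty$ (the endpoint $L^3\to L^\infty$ fails) and $\|Z^{N-1}\psi\|_{L^\infty}$ is not directly controlled beyond the $L^6$-bound in \eqref{Dec_psi}, every bilinear split containing a high-derivative factor carries the worse $\<t\>^{-1+\de}$ decay; the target exponent $\<t\>^{-2+2\de}$ must therefore be reached by admitting two such $\de$-losses and assembling them through the $L^6$-to-$L^\infty$ Gagliardo--Nirenberg interpolation described above, while the cubic contribution in $A_{d+1}$ requires a careful accounting of the $\de$-powers between the $u$- and $\psi$-factors using the two-tier bootstrap \eqref{Main_Prop_Ass1}--\eqref{Main_Prop_Ass2}.
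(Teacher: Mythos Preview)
Your approach to \eqref{Dec_d1/2-A} and \eqref{Dec_Ad+1-L2} matches the paper's: invert via Hardy--Littlewood--Sobolev and split by H\"older, gaining $\<t\>^{-1/2}$ or $\<t\>^{-1}$ from the low-derivative factor placed in $L^3$ or $L^\infty$. The genuine gap is in \eqref{Dec_dA}. Your Gagliardo--Nirenberg route $\|Z^{N-1}A\|_{L^\infty}\lesssim\|\nab Z^{N-1}A\|_{L^6}^{1/2}\|Z^{N-1}A\|_{L^6}^{1/2}$ loses too much decay. Writing $g$ for the bilinear source $\sum_{|b+c|\le N-1}Z^b\psi\,\overline{Z^c\psi}$, one has $\|\nab Z^{N-1}A\|_{L^6}\lesssim\|g\|_{L^6}\lesssim\ep_1^2\<t\>^{-2+\de}$ via an $L^6\cdot L^\infty$ split, but the other factor $\|Z^{N-1}A\|_{L^6}\lesssim\|g\|_{L^2}$ cannot beat $\ep_1^2\<t\>^{-3/2+\de}$: any H\"older decomposition of the $L^2$ norm forces one $\psi$-factor into $L^p$ with $p\le 3$, where the available decay from \eqref{Dec_psi} is at most $\<t\>^{-1/2}$. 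The interpolation therefore yields only $\ep_1^2\<t\>^{-7/4+\de}$, which for $\de\le 1/8$ is strictly weaker than the claimed $\<t\>^{-2+2\de}$.

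The paper avoids this by estimating $\|Z^{N-1}A\|_{L^\infty}$ directly from the kernel: since $\De^{-1}\d_l$ has kernel of size $|x|^{-2}$, one has $\|\De^{-1}\d_l g\|_{L^\infty}\lesssim\|g\|_{L^{3-\de}\cap L^{3+\de}}$, and the near-$L^3$ norm of $g$ admits the split $\|Z^{N-1}\psi\|_{L^6}\|Z^{[(N-1)/2]}\psi\|_{L^{q_\pm}}$ with $q_\pm$ close to $6$, so that \emph{both} factors carry essentially the full $\<t\>^{-1}$ decay (the sub-$L^6$ side is handled by $L^2$--$L^6$ interpolation with weight $O(\de)$, the super-$L^6$ side by one extra vector field and Sobolev). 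A related issue affects the $L^\infty$ half of \eqref{Dec_Ad+1L3}: your reduction via ``$\De^{-1}\d_m\d_l$ is Calder\'on--Zygmund, bounded on $L^p$ for $1<p<\infty$'' excludes $p=\infty$, so the $L^\infty$ bound there is not yet justified; the paper spends one extra derivative (summing over $|b+c|\le N-1$ rather than $N-2$) and uses the embedding $L^6\cap\dot W^{1,6}\hookrightarrow L^\infty$ on the Riesz image to close.
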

\begin{proof}
    We start with the first estimate \eqref{Dec_d1/2-A}. By \eqref{Ell-A} and Sobolev embedding we have
    \begin{align*}
        \| Z^N A\|_{L^2}&\lesssim \||\nab|^{-1}(Z^N\psi Z^{[N/2]}\psi)\|_{L^2}\les\|Z^N\psi\|_{L^2}\|Z^{[N/2]}\psi\|_{L^3}
        \les E^{1/2}_N E^{1/4}_{[N/2]}\|Z^{[N/2]}\psi\|_{L^6}^{1/2}\,.
    \end{align*}
Then by the decay of $\psi$ \eqref{Dec_psi} and \eqref{Main_Prop_Ass1}, we get the bound \eqref{Dec_d1/2-A}.

    For the second estimate \eqref{Dec_dA}, by Sobolev embeddings and H\"older's inequality, we get
    \begin{align*}
        \| Z^{N-1} A\|_{L^\infty}\lesssim &\ \| |\nab|^{-1}(Z^{N-1}\psi Z^{[(N-1)/2]]}\psi)\|_{\lf}
        \lesssim \|Z^{N-1}\psi Z^{[(N-1)/2]]}\psi\|_{L^{3-\de}\cap L^{3+\de}}\\
        \les &\ \|Z^{N-1}\psi\|_{L^6} \|Z^{[(N-1)/2]}\psi \|_{L^{\frac{6(3-\de)}{3+\de}}\cap L^{\frac{6(3+\de)}{3-\de}}}\,.
    \end{align*}
Using interpolations and \eqref{Dec_psi} to bound the above term by
\begin{align*}
\ep_1\<t\>^{-1+\de}  \big( \|Z^{[(N-1)/2]}\psi\|_{L^2}^{\frac{\de}{3-\de}}\|Z^{[(N-1)/2]}\psi\|_{L^6}^{1-\frac{\de}{3-\de}}+ \|Z^{[(N-1)/2]+1}\psi \|_{L^6}  \big)
\les \ \ep_1^2\<t\>^{-2+2\de}\,.
\end{align*}
Hence, the estimate \eqref{Dec_dA} is obtained.

    For the third bound \eqref{Dec_Ad+1-L2},
    from the equation \eqref{Ell-Ad1} we have
    \begin{align*}
        \| Z^N A_{d+1}\|_{L^2}\lesssim &\  \sum_{|b+c|=N}\|  Z^b\psi Z^c\psi\|_{L^2}+\sum_{|b+c+e|=N}\|Z^buZ^c\psi Z^e\psi\|_{L^{6/5}}\,.
    \end{align*}
    By \eqref{Main_Prop_Ass1} and \eqref{Dec_psi}, 
    we bound the first term in the right-hand side by $E_N^{1/2}\|Z^{[N/2]}\psi\|_{\lf}\les \ep_1^2\<t\>^{-1+\de}$.
    By Sobolev embeddings, \eqref{Main_Prop_Ass1}, \eqref{Dec_psi}, \eqref{Decay-phi}, the second term can be controlled by
    \begin{align*}
        &\ \| Z^N u\|_{L^2}\|Z^{[N/2]}\psi\|_{L^6}^2+\| Z^{[N/2]} u\|_{\lf}\|Z^{[N/2]}\psi\|_{L^3}\|Z^N\psi\|_{L^2}\\
        \les &\ \ep_1^3 \<t\>^{-2+\de}+\ep_1^3\<t\>^{-3/2+\de}\les \ep_1^3\<t\>^{-3/2+\de}\,.
    \end{align*}
Thus the estimate \eqref{Dec_Ad+1-L2} follows.

Finally, we prove the bound \eqref{Dec_Ad+1L3}. By Sobolev embeddings and \eqref{Dec_psi} we have
\begin{align*}
\sum_{|b+c|\leq N-2}\| \mathcal R (Z^b\psi Z^c\psi)\|_{L^3\cap L^\infty}\les&\  \| Z^{N-2}\psi\|_{L^6} \|Z^{[(N-2)/2]}\psi\|_{L^6}+\sum_{|b+c|\leq N-1}\| Z^b\psi Z^c\psi\|_{L^6}\\
\les &\ \|Z^{N-1}\psi\|_{L^6}\|Z^{[N/2]}\psi\|_{L^6\cap \lf}
\les  \ep_1^2 \<t\>^{-2+\de}\,.
\end{align*}
For the cubic term in \eqref{Ell-Ad1}, by Sobolev embeddings it suffices to bound 
\begin{align}  \label{Cubic}
        \sum_{|b+c+e|\leq N-2}\| Z^b u Z^c\psi Z^e\psi\|_{L^{3/2}}+\sum_{|b+c+e|\leq N-1}\| Z^b u Z^c\psi Z^e\psi\|_{L^2}\,.
\end{align}
Using H\"older's inequality, \eqref{Dec_psi} and \eqref{Decay-phi2} to bound 
\begin{align*}
	\eqref{Cubic}&\les \|Z^{N-1} u\|_{L^2}\|Z^{[N/2]}\psi\|_{L^6\cap L^\infty}\|Z^{[N/2]}\psi\|_{\lf}+\|Z^{[N/2]} u\|_{\lf}\|Z^{[N/2]}\psi\|_{L^6\cap L^\infty}\|Z^{N-1}\psi\|_{L^2}\\
	&\les  \ep_1\<t\>^\de \ep_1\<t\>^{-1}\ep_1\<t\>^{-1}+\ep_1\<t\>^{-1}\ep_1\<t\>^{-1}\ep_1\<t\>^{\de}\les \ep_1^3 \<t\>^{-2+\de}\,.
\end{align*}
Hence the bound \eqref{Dec_Ad+1L3} follows. This completes the proof of the lemma.
\end{proof}

We also have the decay estimates for the gauge $(v,\ w)$.
\begin{cor}
With the notations and hypothesis in Proposition \ref{Main_Prop}, for the Coulomb frame $v,w$, we denote $v_\infty=\lim_{|x|\rightarrow\infty}v(x)$ and $w_\infty=\lim_{|x|\rightarrow\infty}w(x)$. Then we have
    \begin{align}  \label{v-cong}
        \|v-v_\infty\|_{\lf}+\|w-w_\infty\|_{\lf}\les \ep_1\<t\>^{-\frac{1}{2}+\de}\,.
    \end{align}
\end{cor}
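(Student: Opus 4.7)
The plan is to bound $\|v-v_\infty\|_{\lf}$ (and $\|w-w_\infty\|_{\lf}$ by the symmetric argument) by reducing it to $L^2$ and $L^6$ bounds on $\nab v$, and then feeding in the decay estimates already produced in Lemma~\ref{Dec_psi-Lem} and Corollary~\ref{Dec_A_Cor}. The analytic tool I would invoke is the scale-invariant interpolation in three dimensions
\begin{equation*}
\|f - f_\infty\|_{\lf(\R^3)} \les \|\nab f\|_{L^2}^{1/2}\|\nab f\|_{L^6}^{1/2},
\end{equation*}
valid whenever $f\to f_\infty$ at spatial infinity; this follows by writing $f(x)-f_\infty = -\frac{1}{4\pi}\int_{\R^3}\frac{y}{|y|^3}\cdot\nab f(x+y)\,dy$, splitting the convolution into $|y|<R$ and $|y|>R$, applying H\"older on each piece, and optimizing over $R$.

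With this in place, the structural identity \eqref{phi-v-w} gives $\nab v = -\phi\,\Re\psi + wA$, so $\|\nab v\|_{L^p}\les \|\psi\|_{L^p}+\|A\|_{L^p}$. The $L^2$ side is uniform in time: $\|\psi\|_{L^2}\les E_N^{1/2}\les \ep_1$, and $\|A\|_{L^2}\les \ep_1^2$ follows from $\De A = \d_l\Im(\psi_l\bar\psi)$ via Calder\'on--Zygmund and the $L^2\cap L^3$ bound on $\psi$. For the $L^6$ side, Lemma~\ref{Dec_psi-Lem} delivers $\|\psi\|_{L^6}\les \ep_1\<t\>^{-1}$, while Sobolev embedding together with Calder\'on--Zygmund on \eqref{A,Ad+1} and the Gagliardo--Nirenberg bound $\|\psi\|_{L^4}^2\les \|\psi\|_{L^2}^{1/2}\|\psi\|_{L^6}^{3/2}$ yields $\|A\|_{L^6}\les \|\nab A\|_{L^2}\les \ep_1^2\<t\>^{-3/2}$. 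Altogether $\|\nab v\|_{L^2}\les \ep_1$ and $\|\nab v\|_{L^6}\les \ep_1\<t\>^{-1}$.

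Inserting the two bounds into the interpolation gives $\|v-v_\infty\|_{\lf}\les \ep_1\<t\>^{-1/2}$, which in fact strictly improves the stated $\ep_1\<t\>^{-1/2+\de}$; the parallel estimate for $w$ uses $\nab w = -\phi\,\Im\psi - vA$. The main subtlety I expect in writing this up cleanly is justifying the existence of the limit $v_\infty$ (and $w_\infty$) in a sense strong enough for the interpolation to apply. This is exactly where the Coulomb gauge is essential: it fixes the frame at infinity (cf.\ \cite{BeIoKe}), and the bound $\nab v\in L^2\cap L^6$ then forces $v$ to approach a constant $v_\infty\in T_Q\S^2$ at spatial infinity in the pointwise averaged sense required above.
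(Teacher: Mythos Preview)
Your argument is correct and follows the same overall strategy as the paper: reduce $\|v-v_\infty\|_{\lf}$ to Lebesgue bounds on $\nab v$, invoke the structural identity $\nab v=-\phi\,\Re\psi+wA$, and feed in the decay of $\psi$ and $A$. The difference is the interpolation pair. The paper uses the endpoint-straddling embedding $\|v-v_\infty\|_{\lf}\les \|\nab v\|_{L^{3-\de'}\cap L^{3+\de'}}$ and then interpolates $\|\psi\|_{L^{3\pm\de'}}$ between $L^2$ and $L^6$; the passage through $L^{3-\de'}$ is what produces the $+\de$ loss in the final rate. Your choice of the scale-invariant $L^2$--$L^6$ interpolation $\|v-v_\infty\|_{\lf}\les\|\nab v\|_{L^2}^{1/2}\|\nab v\|_{L^6}^{1/2}$ (justified exactly as you sketch via the Riesz representation and a near/far splitting) is cleaner and in fact yields the sharper $\ep_1\<t\>^{-1/2}$. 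Both routes rely on the Coulomb gauge to pin down $v_\infty,w_\infty$ and on the same decay inputs \eqref{Dec_psi}, \eqref{Dec_d1/2-A}; your version simply avoids introducing the auxiliary parameter $\de'$.
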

\begin{proof}
    By Sobolev embeddings, \eqref{phi-v-w} and $|\phi|=|w|=1$, we have
    \begin{align*}
        \|v-v_{\infty}\|_{\lf} &\les \|\nab v\|_{L^{3-\de'}\cap L^{3+\de'}}
        \les \||\psi|+|A|\|_{L^{3-\de'}\cap L^{3+\de'}}
        \les \|\psi\|_{L^{3-\de'}\cap L^{3+\de'}}+\|\<\nab\>A\|_{L^2}\,.
    \end{align*}
    By interpolation and \eqref{Dec_psi}, we bound $\|\psi\|_{L^{3-\de'}\cap L^{3+\de'}}\les \ep_1\<t\>^{-\frac{1}{2}+\frac{\de'}{3-\de'}}$, where we choose $\de'$ such that $\frac{\de'}{3-\de'}=\de$. From \eqref{Dec_d1/2-A}, the term $\|\<\nab\>A\|_{L^2}$ is bounded by $\ep_1\<t\>^{-1/2+\de}$.
Thus the bound \eqref{v-cong} for $v-v_\infty$ follows.
The second term in \eqref{v-cong} can also be proved in a same way.
\end{proof}

\medskip
\section{Energy estimates}  \label{sec-EnEs}
In this section, we focus on the following energy estimates. 

\begin{prop}     \label{Prop_va}
Let $N\geq 9$. Assume that $(u,G,\psi)$ is the solution of \eqref{sys-2}-\eqref{constraints-re} satisfying the assumptions \eqref{Main_Prop_Ass1} and \eqref{Main_Prop_Ass2}. Then for any $t\in[0,T]$, we have the energy bounds:	
\begin{align}       \label{Ev}
\frac{d}{dt}E_N(t)\leq C_E \ep_1^3\<t\>^{-1+2\de}\,,\qquad 
\frac{d}{dt} E_{N-2}(t)\leq C_E \ep_1^3\<t\>^{-4/3+2\de}\,.
\end{align}
\end{prop}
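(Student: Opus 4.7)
The plan is a standard energy argument applied to the differentiated system \eqref{sys-VF}. For each multiindex $|a|\leq N$ (resp.\ $|a|\leq N-2$) I would pair the first equation with $Z^a u$, the second with $Z^a G$ (Frobenius inner product), the third with $\overline{Z^a\psi}$ (then take imaginary parts), and also $|\nabla|^{-1}$ of the third equation with $|\nabla|^{-1}\overline{Z^a\psi}$, then integrate in $x$. The incompressibility constraint $\div Z^a u = 0$ from \eqref{curl-vf} kills the pressure; the bilinear coupling $\nabla\cdot Z^aG$ / $\nabla Z^a u$ in the first two equations cancels after one integration by parts; and $\int\Delta Z^a\psi\,\overline{Z^a\psi}\,dx$ is real, contributing nothing to $\tfrac{d}{dt}\|Z^a\psi\|_{L^2}^2$. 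Summing over multiindices yields a schematic identity
\begin{equation*}
\tfrac{d}{dt}E_N \;=\; \sum_{|a|\leq N}\Big(\textstyle\int f_a\cdot Z^a u + g_a\!:\!Z^a G + \mathrm{Im}(h_a\,\overline{Z^a\psi}) + \mathrm{Im}(|\nabla|^{-1}h_a\cdot\overline{|\nabla|^{-1}Z^a\psi})\,dx\Big),
\end{equation*}
and similarly for $E_{N-2}$.

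\textbf{Cancellations.} Before bounding the right-hand side term by term I would extract the four cancellations highlighted in the introduction. First, the transport pieces $u\cdot\nabla Z^a u$ in $f_a$ and $u\cdot\nabla Z^a G$ in $g_a$ (i.e.\ the terms with $b=0$) integrate to zero against $Z^a u$ and $Z^a G$ by $\div u=0$. Second, the top-order elastic-elastic coupling $\nabla\cdot(Z^aG\,G^T)$ paired with $Z^a u$ matches, after integration by parts and a symmetrization of indices, the pairing $Z^aG:(\nabla Z^a u\,G)$ from $g_a$, and they annihilate. Third, the convective Schr\"odinger term $-i u\cdot\nabla Z^a\psi$ against $\overline{Z^a\psi}$ becomes $\tfrac12 u\cdot\nabla|Z^a\psi|^2$, which vanishes again by $\div u = 0$. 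Fourth, and most importantly, the top-order cross-coupling between $-\d_j\Re(\psi\,\overline{Z^a\psi_j})$ in $f_a$ and $-i\d_m Z^a u\cdot\psi$ in $h_a$ is the Coulomb-gauge rephrasing of the identity
\begin{equation*}
-\langle u,\d_j\Re(\psi\overline{\psi_j})\rangle - \langle \psi_j,\Re(\d_j u\cdot\bar\psi)\rangle = 0
\end{equation*}
applied to $(Z^a u,Z^a\psi)$, and it removes the only derivative-loss in the problem.

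\textbf{Remainder estimates.} After these cancellations, every surviving contribution in $f_a$, $g_a$, $h_a$ splits by Leibniz so that at least one factor carries at most $[N/2]+1$ (resp.\ $[(N-2)/2]+1$) vector fields. I would place that low-frequency factor in $L^\infty$ or $L^6$ using the decay estimates of \lemref{Decay-phi}, \lemref{Dec_psi-Lem} and \corref{Dec_A_Cor}, and the high-frequency factor in $L^2$ via \eqref{Main_Prop_Ass1}. Typical contributions are bounded by $\|Z^{[N/2]}u\|_{L^\infty}\|\nabla Z^{N}u\|_{L^2}\|Z^N u\|_{L^2}\lesssim \ep_1^3\langle t\rangle^{-1+2\de}$ for the elastic side, and by the cubic $\|Z^{N-1}\psi\|_{L^6}^2\|Z^N\psi\|_{L^2}\lesssim \ep_1^3\langle t\rangle^{-2+2\de}$ plus the quadratic gauge contributions $\|Z^{N-1}A\|_{L^\infty}\|Z^N\psi\|_{L^2}^2$ for the Schr\"odinger side. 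The same scheme at order $N-2$ profits from the sharper low-order decay $\|Z^{N-3}\psi\|_{L^6}\lesssim \ep_1\langle t\rangle^{-1}$ (no $\de$ loss) and from the fact that the worst nonlinearities, when measured at the $H^{-1}$ level demanded by the $|\nabla|^{-1}Z^a\psi$ piece of $E_{N-2}$, gain an extra $\langle t\rangle^{-1/3}$ by interpolating Klainerman–Sideris bounds with the weighted $Y_{N-2}$ norm, upgrading the rate from $\langle t\rangle^{-1+2\de}$ to $\langle t\rangle^{-4/3+2\de}$.

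\textbf{Main obstacle.} The delicate step is the top-order cross coupling of Schr\"odinger and elasticity. Both $\int Z^a u\cdot\d_j\Re(\psi\overline{Z^a\psi_j})$ and $\int \Re(\d_j Z^a u\cdot\psi\,\overline{Z^a\psi_j})$ contain $N+1$ derivatives split between two factors and are not individually controlled by the bootstrap energies. The combined cancellation of cancellation (iv) is indispensable; reducing the effective order by one and producing only harmless factors $\|Z^{[N/2]}\psi\|_{L^\infty}$ times $\|Z^N u\|_{L^2}\|Z^N\psi\|_{L^2}$. Tracking the correct distribution of vector fields through the compatibility constraint \eqref{comm} (which mixes $A$-derivatives into $\d_j Z^a\psi_k$) and through the quadratic identities \eqref{Ell-A}--\eqref{Ell-Ad1} for $A$ and $A_{d+1}$, while preserving the divergence-free structures needed for the cancellations above, is the main bookkeeping challenge of the proof.
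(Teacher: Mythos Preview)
Your overall strategy for the higher-order bound is correct and essentially matches the paper: you correctly identify the key cross-cancellation between $-\int Z^a u\cdot\d_j\Re(\psi\,\overline{Z^a\psi_j})\,dx$ (arising from $f_a$ after using the compatibility \eqref{comm}) and $\int\Re(Z^a u\cdot\psi\,\overline{\d_j Z^a\psi_j})\,dx$ (arising from $h_a$ after one integration by parts), and the remaining bookkeeping is as you describe.

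There is, however, a genuine gap in your lower-order argument. For the elastic quadratic terms in $\tfrac{d}{dt}E_{N-2}$, your scheme of putting the low factor in $L^\infty$ and the rest in $L^2$ only yields
\[
\|Z^{N-2}u\|_{L^2}\,\|Z^{[N/2]}(u,G)\|_{L^\infty}\,\|\nabla Z^{N-3}(u,G)\|_{L^2}\;\lesssim\;\ep_1^3\<t\>^{-1},
\]
which is \emph{not} better than $\<t\>^{-4/3+2\de}$ and not integrable. Your explanation that the extra $\<t\>^{-1/3}$ comes from ``the $H^{-1}$ level'' or ``interpolating with $Y_{N-2}$'' applies (in a modified form) to the Schr\"odinger--elastic cross terms, but not to the purely elastic ones: those never see $H^{-1}$ or $Y$.

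The paper closes this gap by a null-form-type decomposition. Splitting into the regions $r<2\<t\>/3$ and $r\geq 2\<t\>/3$, the near-axis part is handled with the weighted norm $X_{N-2}$ (since $\<t-r\>\sim\<t\>$ there), while in the far region one writes $\nabla=\omega\d_r-\tfrac{\omega\times\Omega}{r}$ and exploits the constraint $\nabla\cdot Z^bG^T=0$ through \eqref{omuH}, which gives the improved pointwise decay $\<r\>^{3/2}|\omega\cdot Z^bG^T|\lesssim E^{1/2}$. This yields $\ep_1^3\<t\>^{-3/2+\de}$ for the elastic contributions. For the Schr\"odinger--elastic cross terms the paper does \emph{not} reuse cancellation (iv) at low order; instead it bounds them directly via $\|Z^{N-2}u\|_{L^3}\lesssim\|Z^{N-2}u\|_{L^2}^{2/3}\|Z^{N-2}u\|_{L^\infty}^{1/3}\lesssim\ep_1\<t\>^{-1/3+\de/3}$ (here \eqref{Decay-phi2}, hence $X_{N}$, enters), paired with $\|Z^{N-2}\psi\|_{L^6}\lesssim\ep_1\<t\>^{-1+\de}$. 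You should replace your vague attribution of the $\<t\>^{-1/3}$ gain by these two concrete mechanisms.
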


\subsection{Higher-order energy estimates}\label{sec-HighEnergy}
This section is devoted to the higher-order energy estimate in \eqref{Ev},
which is obtained by the following two energy estimates of $(u,G,\psi)$: for any $|a|\leq N$,
\begin{align}\label{Eu-2}
	\frac{1}{2}\frac{d}{dt}\int  (|Z^a u|^2+| Z^a G|^2)\ dx&\leq  C\ep_1^3 \<t\>^{-1+2\de}- \int  Z^a u\cdot   \Re(\psi\overline{\d_j Z^a\psi_j})\ dx\,,\\ \label{Ephi}
        \frac{1}{2}\frac{d}{dt}\int | |\nab|^{-1} Z^a \psi|^2+| Z^a \psi|^2\ dx
		&\leq C \ep_1^3\<t\>^{-1+2\de}+ \int  Z^a u\cdot   \Re(\psi\overline{\d_j Z^a\psi_j})\ dx\,.
\end{align}

\begin{proof}[Proof of the energy estimate \eqref{Eu-2}]
By the first two equations in (\ref{sys-VF}) and $\div Z^a u=0$, we calculate
	\begin{align*}
		\frac{1}{2}\frac{d}{dt}\int  (|Z^a u|^2+| Z^a G|^2)\ dx
		=\int (Z^au\cdot f_a+ Z^aG_{ij} g_{a,ij})\ dx\,.
	\end{align*}
Then from $f_a,\ g_a$ in \eqref{fa} and \eqref{ga}, it suffices to bound
\begin{align*}
	I_1&: =\sum_{b+c=a}C_a^b \int  Z^a u \big(-Z^b u\cdot \nab Z^c u +\nab\cdot(Z^b G Z^c G^T)\big)\ dx,\\
	I_2&: =- \sum_{b+c=a}C_a^b\int  Z^a u\cdot \d_j  \Re( Z^b\psi  Z^c\bar{\psi_j})\ dx,\\
	I_3&:= \sum_{b+c=a}C_a^b \int  Z^a G_{ij}\cdot \big(-Z^b u\cdot \nab Z^c G_{ij}+\d_k Z^b u_i Z^c G_{kj}\big)\ dx.
\end{align*}

\emph{1) We prove the estimates of $I_1$ and $I_3$: $|I_1|+|I_3|\les \ep_1^3\<t\>^{-1+2\de}$.}

We use $\div u=0$ and $\d_j G_{jk}=0$ to rearrange the $I_1$ and $I_3$ as
\begin{equation}   \label{I1I4}
\begin{aligned} 
	I_1+I_3&= \sum_{b+c=a;|c|<| a|}C_a^b \int  Z^a u_i \big(-Z^b u\cdot \nab Z^c u_i + \d_j Z^c G_{ik} Z^b G_{jk}\big)\ dx\\
	&\quad +\sum_{b+c=a:|c|<|a|}C_a^b \int  Z^a G_{ij}\cdot \big(-Z^b u\cdot \nab Z^c G_{ij}+\d_k Z^c u_i Z^b G_{kj}\big)\ dx,
\end{aligned}
\end{equation}
then by \eqref{Decay-phi2} and \eqref{Main_Prop_Ass2}, which can be bounded by $E_N\|Z^{[N/2]+1}(u,G)\|_{L^\infty}\les \ep_1^3\<t\>^{-1+2\de}$.

\emph{2) We prove the estimate of $I_2$: 
\begin{equation*}
    |I_2|\leq C\ep_1^3\<t\>^{-1+2\de}-\int Z^au\cdot\Re(\psi \overline{\d_j Z^a\psi_j})dx.
\end{equation*}}

By the constraint \eqref{comm}, we rewrite the integral $I_2$ as 
\begin{align} \nonumber
	I_2&=- \sum_{b+c=a}C_a^b\int  Z^a u\cdot  \Re( \d_j  Z^b\psi\   \overline{Z^c\psi_j}+ Z^b\psi \  \overline{\d_j Z^c  \psi_j})\ dx\\ \nonumber
	&= - \sum_{b+c=a}C_a^b\int  Z^a u\cdot  \Re\big( \d  Z^b\psi_j\   \overline{Z^c\psi_j}-i \sum_{b_1+b_2=b}C_b^{b_1} (Z^{b_1} A_j Z^{b_2}\psi-Z^{b_1} A Z^{b_2}\psi_j) \overline{Z^c\psi_j}\\\nonumber
	&\qquad + Z^b\psi \  \overline{\d_j Z^c  \psi_j}\big)\ dx\,.
\end{align}
Using integration by parts, this is further rearranged as 
\begin{align} \nonumber
	I_2&\leq - \frac{1}{2}\sum_{b+c=a}C_a^b \int Z^a u\cdot  \nab \Re (  Z^b\psi_j\cdot \overline{ Z^c\psi_j})\ dx
	+ C\sum_{b_1+b_2+c=a}\int | Z^a u| | Z^{b_1} A_j| |Z^{b_2}\psi| |Z^c\psi_j|dx\\ \label{I2}
 &\quad + C\sum_{b+c=a;|c|<|a|}\int  |Z^a u||Z^b\psi| |\d_j Z^c\psi_j|dx
	 - \int  Z^a u\cdot   \Re(\psi\ \overline{\d_j Z^a\psi_j})\ dx\\\nonumber  
	:&= I_{2,1}+I_{2,2}+I_{2,3}+I_{2,4}\,.
\end{align}
The last integral $I_{2,4}$ is retained temporarily here. By integration by parts and $\div Z^a u=0$, the term $I_{2,1}$ vanishes. For the second integral $I_{2,2}$, by Sobolev embedding, \eqref{Main_Prop_Ass1}, \eqref{Dec_d1/2-A}, \eqref{Dec_psi} we have
\begin{align}   \label{I22}
	I_{2,2}\les \|Z^N u\|_{L^2}\|Z^NA\|_{L^2}\|Z^N\psi\|_{L^2}\|Z^{[N/2]}\psi\|_{L^\infty}\les \ep_1^5\<t\>^{-3/2+3\de}\,.
\end{align}
From \eqref{Decay-phi2} and \eqref{Main_Prop_Ass2}, the third integral $I_{2,3}$ is bounded by $E_N\|Z^{[N/2]+1}\psi\|_{L^\infty}\les \ep_1^3\<t\>^{-1+2\de}$.
Collecting the above estimates, we obtain the estimate of $I_2$. This completes the proof of \eqref{Eu-2}.
\end{proof}

\begin{proof}[Proof of the energy estimate \eqref{Ephi}]
By the $Z^a\psi$-equation in (\ref{sys-VF}) and integration by parts, we calculate
\begin{align}  \label{Ene-psi}
\frac{1}{2}\frac{d}{dt}\int | |\nab|^{-1} Z^a \psi|^2+| Z^a \psi|^2\ dx
=\int \Im\big(|\nab|^{-1}h_a\cdot |\nab|^{-1}\overline{Z^a\psi} \big)+\Im\big(h_a\cdot \overline{Z^a\psi} \big)\ dx\,.
\end{align}
From the expression of $h_a$ in \eqref{ha}, we decompose the above integral into three terms 
\begin{equation*}
	\eqref{Ene-psi}=\sum_{j=1}^3\int \Im\big(|\nab|^{-1}h_{a,j}\cdot |\nab|^{-1}\overline{Z^a\psi} \big)+\Im\big(h_{a,j}\cdot \overline{Z^a\psi} \big)\ dx:=J_1+J_2+J_3\,,
\end{equation*}
where $h_{a,j}$ for $j=1,2,3$ are the nonlinear terms in \eqref{ha} given by
\begin{equation}\label{has}
\begin{aligned}
h_{a,1}:&=-i\sum_{b+c=a} C_a^b Z^b (u-2A)\cdot \nab Z^c\psi\,,
\qquad h_{a,2}:=-i\sum_{b+c=a} C_a^b \nab Z^b u\cdot Z^c\psi\,,\\
h_{a,3}:&=\sum_{b+c=a}C_a^b Z^b A_{d+1} Z^c\psi+\sum_{b+c+e=a} C_a^{b,c} \big(Z^b (u-A)\cdot Z^c A Z^e\psi-i\Im(Z^b \psi_l Z^c\bar{\psi})Z^e\psi_l\big)\,.
\end{aligned}	
\end{equation}

\emph{1). We prove the estimate of $J_1$: $J_1\les \ep_1^3\<t\>^{-1+2\de}$.}

From the expression of $h_{a,1}$, by $\div Z^b u=\div Z^b A=0$ and integration by parts, we write $J_1$ as
\begin{align}\nonumber
J_1&= \int \Im\big(|\nab|^{-1}\nab\cdot(-i\sum_{b+c=a} C_a^b Z^b (u-2A) Z^c\psi)\cdot |\nab|^{-1}\overline{Z^a\psi} \big)dx\\ \label{J1}
&\quad +\int \Im\big((-i\sum_{b+c=a;|c|<N} C_a^b Z^b (u-2A)\cdot \nab Z^c\psi)\cdot \overline{Z^a\psi} \big)\ dx:=J_{1,1}+J_{1,2}.
\end{align}
Then from \eqref{Decay-phi2}, \eqref{Dec_dA}, \eqref{Main_Prop_Ass1}, \eqref{Dec_d1/2-A} and \eqref{Dec_psi}, we have
\begin{align*}
	J_{1,1}\les &\  \big(\|(Z^{[N/2]}u,Z^{[N/2]}A)\|_{L^\infty}\|Z^N \psi\|_{L^2} +\|(Z^{N}u,Z^{N}A)\|_{L^2}\|Z^{[N/2]}\psi\|_{\lf}\big)\||\nab|^{-1}Z^a\psi\|_{L^2} \\
	\les &\ \big( (\ep_1\<t\>^{-1}+\ep_1^2\<t\>^{-2+2\de})\ep_1\<t\>^{\de}+(\ep_1\<t\>^{\de}+\ep_1^2\<t\>^{-1/2+\de})\ep_1\<t\>^{-1}  \big)\ep_1\<t\>^\de
	\les \ep_1^3\<t\>^{-1+2\de}\,.
\end{align*}
The second integral $J_{1,2}$ can also be controlled by $\ep_1^3\<t\>^{-1+2\de}$ similarly.
Hence, the estimate of $J_1$ follows.

\emph{2). We prove the estimate of $J_2$: \begin{equation*}
    J_2\leq C\ep_1^3\<t\>^{-1+2\de}+\int Z^a u\cdot\Re (  \psi \d_j\overline{Z^a\psi_j} )\ dx.
\end{equation*}
}

Note that the expression of $h_{a,2}$ in \eqref{has}, we rewrite the $J_2$ as
\begin{align}  \nonumber
J_2&=\int \Im\big(|\nab|^{-1}(-i\sum_{b+c=a} C_a^b \d_j Z^b u\cdot Z^c\psi))\cdot |\nab|^{-1}\overline{Z^a\psi_j} \big)\\  \label{J2}
&\quad +\int \Im\big((-i\sum_{b+c=a;|b|<N} C_a^b \d_j Z^b u\cdot Z^c\psi)\cdot \overline{Z^a\psi_j} \big)\ dx+\int \Im\big((-i\d_j Z^a u\cdot \psi)\cdot \overline{Z^a\psi_j} \big)\ dx\\\nonumber
&= J_{2,1}+J_{2,2}+J_{2,3}\,.
\end{align}

Here we must rewrite the term $J_{2,1}$ to gain more decay. Precisely, by integration by parts and \eqref{comm}, we have
\begin{align}   \nonumber
    J_{2,1}
    &=-\sum_{b+c=a} C_a^b\int \Re \Big(\big(|\nab|^{-1}\d_j(  Z^b u\cdot Z^c\psi)-|\nab|^{-1}(Z^b u_k\cdot \d_j Z^c\psi_k)\big) |\nab|^{-1}\overline{Z^a\psi_j} \Big)\ dx\\\label{J21}
    &\leq C\sum_{b+c=a}\|Z^b u Z^c\psi\|_{L^2} \||\nab|^{-1}Z^a\psi\|_{L^2}\\\nonumber
    &\quad + \sum_{b+c=a} C_a^b\int \Re \Big(  |\nab|^{-1}(Z^b u_k \d_k Z^c\psi_j)\cdot |\nab|^{-1}\overline{Z^a\psi_j} \Big)\ dx\\\nonumber
    &\quad + C\sum_{b+c_1+c_2=a}\int \big||\nab|^{-1}(Z^b u\ Z^{c_1}A\ Z^{c_2}\psi)\cdot |\nab|^{-1}\overline{Z^a\psi}\big|\ dx:=\mathcal R_1+\mathcal R_2+\mathcal R_3\,.
\end{align}
By \eqref{Decay-phi2}, \eqref{Dec_psi}, \eqref{Main_Prop_Ass1} and \eqref{Main_Prop_Ass2}, we bound the first term $\mathcal R_1$ by
\begin{align}   \label{J21a}
	&\mathcal R_1	\les (\|Z^{[N/2]}u\|_{\lf}+\|Z^{[N/2]}\psi\|_{\lf})E_N
	\les \ep_1^3 \<t\>^{-1+2\de}\,.
\end{align}
For the second term $\mathcal R_2$, by $\div Z^b u=0$, we can move the $\d_k$ out of $Z^b u_k \d_k Z^c\psi_j$, then we obtain
\begin{align*}
	\mathcal R_2=&\ \sum_{b+c=a} C_a^b\int \Re \Big(  |\nab|^{-1}\d_k(Z^b u_k  Z^c\psi_j)\cdot |\nab|^{-1}\overline{Z^a\psi_j} \Big)\ dx\\
	\les &\ \sum_{b+c=a}\|Z^b u Z^c\psi\|_{L^2} \||\nab|^{-1}Z^a\psi\|_{L^2}\les \ep_1^3 \<t\>^{-1+2\de}\,.
\end{align*}
Using Sobolev embeddings, \eqref{Main_Prop_Ass1}, \eqref{Dec_d1/2-A}, \eqref{Decay-phi2} and \eqref{Dec_psi} we bound the third term $\mathcal R_3$ by 
\begin{align*}
\mathcal R_3\les &\  \sum_{b+c_1+c_2=a}\|Z^b u\ Z^{c_1}A\  Z^{c_2}\psi\|_{L^{6/5}}\ep_1\<t\>^\de\\
    \les &\ \|Z^{N}A\|_{L^2}(\|Z^N u\|_{L^2}\|Z^{[N/2]}\psi\|_{\lf}+\|Z^{[N/2]} u\|_{L^\infty}\|Z^{N}\psi\|_{L^2})\ep_1\<t\>^\de\\
    \les &\ \ep_1^2\<t\>^{-1/2+\de}\ep_1\<t\>^{-1}\ep_1^2\<t\>^{2\de}
    \les \ep_1^5 \<t\>^{-3/2+3\de}\,.
\end{align*}
Hence, the term $J_{2,1}$ is controlled by $\ep_1^3\<t\>^{-1+2\de}$.

Similar to the estimate \eqref{J21a}, we bound the $J_{2,2}$ by $\ep_1^3\<t\>^{-1+2\de}$.
By integration by parts and \eqref{Dec_psi}, the last integral $J_{2,3}$ is rewritten as 
\begin{align*}
	J_{2,3}&=\int \Re \big( Z^a u\cdot \d_j\psi\overline{Z^a\psi_j} \big)\ dx+\int \Re \big( Z^a u\cdot \psi \d_j\overline{Z^a\psi_j} \big)\ dx\\
	&\leq C \|Z^a u\|_{L^2} \|\nab\psi\|_{\lf}\|Z^a\psi\|_{L^2}+\int \Re \big( Z^a u\cdot \psi \d_j\overline{Z^a\psi_j} \big)\ dx\\
	&\leq C\ep_1^3\<t\>^{-1+2\de}+\int \Re \big( Z^a u\cdot \psi \d_j\overline{Z^a\psi_j} \big)\ dx\,.
\end{align*}
Therefore, the estimate of $J_2$ follows.

\emph{3). We prove the estimate of $J_3$: $J_3\les \ep_1^3\<t\>^{-3/2+2\de}$.}

By H\"older's inequality and Sobolev embedding, we have
\begin{align*}
    J_3&\les \||\nab|^{-1}h_{a,3}\|_{H^1} \||\nab|^{-1}Z^a\psi\|_{H^1}\les \ep_1\<t\>^\de\|h_{a,3}\|_{L^2\cap L^{6/5}}\,.
\end{align*}
By Sobolev embedding, \eqref{Dec_Ad+1L3}, \eqref{Main_Prop_Ass1}, \eqref{Dec_Ad+1-L2} and \eqref{Dec_psi}, we bound the quadratic term in $h_{a,3}$ by
\begin{align*}
	\sum_{b+c=a}\|Z^b A_{d+1}Z^c\psi\|_{L^2\cap L^{6/5}}&\les \|Z^{[N/2]}A_{d+1}\|_{L^3\cap\lf}\|Z^N\psi\|_{L^2}+\|Z^N A_{d+1}\|_{L^2}\|Z^{[N/2]}\psi\|_{L^3\cap \lf}\\
	&\les \ep_1^2 \<t\>^{-2+\de}\ep_1\<t\>^\de+\ep_1^2\<t\>^{-1+\de}(\ep_1\<t\>^{-1/2}+\ep_1\<t\>^{-1})\les \ep_1^3\<t\>^{-3/2+\de}\,.
\end{align*}
By Sobolev embeddings, \eqref{Dec_d1/2-A}, \eqref{Dec_psi}, \eqref{Dec_dA} and \eqref{Main_Prop_Ass1}, the first cubic term in $h_{a,3}$ is bounded by
\begin{align*}
	&\sum_{b+c+e=a}\|Z^b(u-A)Z^c A Z^e\psi\|_{L^2\cap L^{6/5}}\\
	\les &\ \big\||Z^Nu|+|Z^NA|\big\|_{L^2}(\|Z^{N}A\|_{L^2}\|Z^{[N/2]}\psi\|_{\lf}+\|Z^{[N/2]}A\|_{\lf}\|Z^{N}\psi\|_{L^2})\\
	\les &\ \ep_1\<t\>^\de (\ep_1^2\<t\>^{-1/2+\de}\ep_1\<t\>^{-1}+\ep_1^2\<t\>^{-2+2\de}\ep_1\<t\>^{\de})\les \ep_1^4 \<t\>^{-3/2+2\de}\,.
\end{align*}
The second cubic term can also be estimated by
\begin{align*}
	&\sum_{b+c+e=a}\|Z^b\psi Z^c \psi Z^e\psi\|_{L^2\cap L^{6/5}}
	\les \|Z^N\psi\|_{L^2}\|Z^{[N/2]}\psi\|_{L^6\cap \lf}^2\les \ep_1\<t\>^\de \ep_1^2\<t\>^{-2}\les \ep_1^3\<t\>^{-2+\de}\,.
\end{align*}
Hence, the term $J_3$ is controlled by $\ep_1^3\<t\>^{-3/2+2\de}$. We complete the proof of estimate \eqref{Ephi}.
\end{proof}

\subsection{Lower-order energy estimates}\label{sec-LowEnergy}

This section is devoted to the lower-order energy estimate in \eqref{Ev}.
\begin{proof}[Proof of \eqref{Ev}]

\emph{Step 1: We prove that}
\begin{align}   \label{E1}
    \frac{d}{dt}\sum_{|a|\leq N-2}\Big(\lV  Z^a u\rV_{L^2}^2+\| Z^aG\|_{L^2}^2\Big)\les \ep_1^3 \<t\>^{-3/2+\de}\,.
\end{align}

By the first two equations in (\ref{sys-VF}), we calculate
\begin{align*}
	\frac{1}{2}\frac{d}{dt}\Big(\lV  Z^a u\rV_{L^2}^2+\| Z^aG\|_{L^2}^2\Big)
	&=\sum_{b+c=a}C_a^b \int Z^a u \big(-Z^b u\cdot \nab Z^c u +\nab \cdot (Z^b G Z^cG^T)\big)\\
	&\quad +Z^a G_{ij}\cdot \big(-Z^b u\cdot\nab  Z^c G_{ij}+\d_k Z^b u_i Z^c G_{kj}\big)\ dx\\
	&\quad 	- \sum_{b+c=a}C_a^b\int  Z^a u\ \d_j \Re ( Z^b\psi\cdot \overline{ Z^c\psi_j})\ dx
	:=K_1+K_2\,.
\end{align*}
Then we proceed to bound the terms $K_1$ and $K_2$ separately.

\medskip
\emph{1) Estimate of $K_1$.}

Similar to \eqref{I1I4}, using $\div u=0$, $\d_j G_{jk}=0$ and integration by parts, we rewrite the $K_1$ as
\begin{align*}
	K_1&= \sum_{b+c=a;|c|<| a|}C_a^b \int  Z^a u_i \big(-Z^b u\cdot \nab Z^c u_i + \d_j Z^c G_{ik} Z^b G_{jk}\big)\ dx\\
	&\quad +\sum_{b+c=a:|c|<|a|}C_a^b \int   Z^a G_{ij}\cdot \big(-Z^b u\cdot \nab Z^c G_{ij}+\d_k Z^c u_i Z^b G_{kj}\big)\ dx\,.
\end{align*}
 Thanks to the constriants $\nab \cdot Z^b u=\nab\cdot Z^b G^T=0$, all of the above terms are dealt with in similar method. Here we only consider the following term in detail
\begin{align*}
	k_1:=\sum_{|a|\leq N-2}\sum_{b+c=a;|c|<| a|}|\d_j Z^c G_{ik} Z^b G_{jk}|\,.
\end{align*}
 In the region $r<2\<t\>/3$, applying \eqref{Decay-phi2} and \eqref{Main_Prop_Ass2}, we infer
\begin{equation}   \label{k1-1}
\begin{aligned}
	\|k_1\|_{L^2(r<2\<t\>/3)}&\les \<t\>^{-1}\|\<t-r\>\nab Z^{N-3} G\|_{L^2}\|Z^{N-2} G \|_{\lf} \\
	&\les \<t\>^{-1} X^{1/2}_{N-2}\ep_1\<t\>^{-1+\de} \les \ep_1^2\<t\>^{-2+\de}\,.
\end{aligned}
\end{equation}
In the region $r\geq 2\<t\>/3$, we utilize $\nab=\om\d_r-\frac{\om\times \Om}{r}$ to rewrite $k_1$ as
\begin{align*}
	k_1=\sum_{|a|\leq N-2}\sum_{b+c=a;|c|<| a|}\Big|\Big(\om_j \d_r-\frac{(\om\times \Om)_j}{r} \Big)Z^c G_{ik} Z^b G_{jk}\Big|\,.
\end{align*}
Then it follows from \eqref{omuH}, \eqref{Decay-phi2} and \eqref{Main_Prop_Ass2} that
\begin{align}  \nonumber 
	\|k_1\|_{L^2(r\geq 2\<t\>/3)}&\les \<t\>^{-3/2}\|\d_r Z^{N-3}G\|_{L^2}\|r^{3/2}\om_j Z^{N-2}G_{jk}\|_{\lf}\\ \nonumber
     &\quad  +\<t\>^{-1}\|\Om Z^{N-3}G\|_{\lf}\|Z^{N-2}G\|_{L^2}\\ \label{k1-2}
	&\les \<t\>^{-3/2}E^{1/2}_{N-2}E^{1/2}_N
+\<t\>^{-2}\big(E^{1/2}_{N}+X^{1/2}_N\big)E^{1/2}_{N-2}\les \ep_1^2\<t\>^{-3/2+\de}\,.
\end{align}
In a similar way, we can also bound the other three terms in $K_1$. Hence, we bound $K_1$ by $\ep_1^3\<t\>^{-3/2+\de}$.

\smallskip 
\emph{2) Estimate of $K_2$.}

Along the same way in the calculation \eqref{I2} of $I_2$, we also have
\begin{align*}
	K_2&\leq C\sum_{b_1+b_2+c=a} \int |Z^a u||Z^{b_1}A||Z^{b_2}\psi| |Z^c\psi|dx-\sum_{b+c=a}C_a^b\int Z^a u\cdot \Re(Z^b\psi \overline{\d_j Z^c\psi_j}) dx\\
&:= K_{2,1}+K_{2,2}\,.
\end{align*}
The first integral $K_{2,1}$ is bounded by $\ep_1^5\<t\>^{-3/2+3\de}$, which is estimated as the case \eqref{I22}.
For the integral $K_{2,2}$, we have from \eqref{Main_Prop_Ass1} and \eqref{Dec_psi} that
\begin{align*}
	K_{2,2}&\les \|Z^{N-2} u\|_{L^2} \|Z^{N-1}\psi\|_{L^6}\|  Z^{[N/2]+1}\psi\|_{L^3}
 \les \ep_1 \ep_1\<t\>^{-1+\de}\ep_1\<t\>^{-1/2}
\les\ep_1^3 \<t\>^{-3/2+\de}\,.
\end{align*}	
Hence, the term $K_2$ is bounded by $\ep_1^3\<t\>^{-3/2+\de}$.
This completes the proof of the estimate \eqref{E1}.

\medskip
\emph{Step 2: We prove that}
\begin{align}  \label{E2}
    \frac{d}{dt}\sum_{|a|\leq N-2}\int | |\nab|^{-1} Z^a \psi|^2+| Z^a \psi|^2\ dx\les \ep_1^3\<t\>^{-4/3+2\de}\,.
\end{align}

From the $ Z^a\psi$-equation in (\ref{sys-VF}) and $\div u=0$, it suffices to bound
\begin{align*}
L_j:=\int \Im\big(|\nab|^{-1}h_{a,j}\cdot |\nab|^{-1}\overline{Z^a\psi} \big)+\Im\big(h_{a,j}\cdot \overline{Z^a\psi} \big)\ dx\,,
\end{align*}
where $h_{a,j}$ for $j=1,2,3$ are given in \eqref{has}.

\emph{1) Estimate of $L_1$.}
Similar to the estimates of $J_1$ in \eqref{J1}, it suffices to bound 
\begin{align*}\nonumber
L_1
&\les \|Z^{N-2}(u-2A)\|_{L^3}\|Z^{N-2}\psi\|_{L^6}E^{1/2}_{N-2}\,.
\end{align*}
From the decays \eqref{Decay-phi2}, \eqref{Dec_dA} and \eqref{Dec_psi}, we obtain
\begin{align*}
    L_1&\les \|Z^{N-2}(u-2A)\|_{L^2}^{2/3}\|Z^{N-2}(u-2A)\|_{\lf}^{1/3} \ep_1\<t\>^{-1+\de}\ep_1\\
    &\les \ep_1\<t\>^{-1/3+\de/3}\ep_1^2\<t\>^{-1+\de}\les \ep_1 \les \ep_1^3\<t\>^{-4/3+2\de}\,.
\end{align*}

\emph{2) Estimate of $L_2$.}

By the calculations in \eqref{J2} and \eqref{J21}, it suffices to bound
\begin{align*}
    L_2&\les \sum_{|b+c|=|a|\leq N-2}(\|Z^b u Z^c\psi\|_{L^2} \||\nab|^{-1}Z^a\psi\|_{L^2}+\|\nab Z^b u\|_{L^2} \|Z^c\psi Z^a\psi\|_{L^2})\\
    &\quad + \sum_{|b+c_1+c_2|=|a|\leq N-2}\||\nab|^{-1}(Z^b u Z^{c_1}A Z^{c_2}\psi)\|_{L^2}\||\nab|^{-1}Z^a\psi\|_{L^2}:=L_{2,1}+L_{2,2}.
\end{align*}
Then from \eqref{Decay-phi2}, \eqref{Dec_psi} and \eqref{Main_Prop_Ass1}, the first term in the right-hand side
\begin{align*}
	L_{2,1}&\les \|Z^{N-2} u\|_{L^3}\|Z^{N-2} \psi\|_{L^6}\ep_1+\|\nab Z^{N-2} u\|_{L^2}\|Z^{N-2}\psi\|_{L^3}\| Z^{N-2}\psi\|_{L^6}\\
	&\les \ep_1\<t\>^{-1/3+\de/3}\ep_1\<t\>^{-1+\de}\ep_1+ \ep_1\<t\>^{\de}\ep_1\<t\>^{-1/2+\de/2}\ep_1\<t\>^{-1+\de}
	\les \ep_1^3\<t\>^{-4/3+2\de}\,.
\end{align*}
Using \eqref{Decay-phi2}, \eqref{Dec_d1/2-A} and \eqref{Dec_psi}, we bound the second term by
\begin{align*}
    L_{2,2}&\les \|Z^{N-2}u\|_{\lf}\|Z^{N-2} A\|_{L^2}\|Z^{N-2}\psi\|_{L^3}\ep_1\\
    &\les \ep_1\<t\>^{-1+\de}\ep_1\<t\>^{-1/2+\de}\ep_1\<t\>^{-1/2+\de/2}\ep_1\les \ep_1^4\<t\>^{-2+3\de}\,.
\end{align*}
Hence, the $L_2$ is bounded by $\ep_1^3\<t\>^{-4/3+2\de}$. Finally, along the same line as $J_3$ above, we can obtain $|L_3|\les \ep_1^3 \<t\>^{-3/2+3\de}$.
This completes the proof of estimate \eqref{E2}.
\end{proof}

\medskip 
\section{Estimates of the \texorpdfstring{$L^2$}{} Weighted Norms}\label{sec-L2}

In this section, we focus on the $L^2$ weighted estimates for $X_j$ and $Y_j$ under the bootstrap assumptions \eqref{Main_Prop_Ass1}, \eqref{Main_Prop_Ass2} and decay estimates in Section \ref{sec-dec}.

\subsection{Weighted estimates of elastic equations}
Here we focus on the estimates of $L^2$ weighted norm $X_{j}$ \eqref{XX}. 
The main proposition we will prove is as follows.
\begin{prop}\label{XX-prop}
Let $N\geq 9$. Suppose that $(u,G,\psi)$ is a solution of \eqref{sys-2}-\eqref{constraints-re} satisfying the assumption \eqref{Main_Prop_Ass1} and \eqref{Main_Prop_Ass2}. Then for any $k\leq N$, we have
	\begin{align}\label{L2uH}
		&X_{k}\leq C_3 \ep_1 X_k+C_3E_k\, .
	\end{align}
\end{prop}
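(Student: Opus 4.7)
The plan is to establish the weighted estimate \eqref{L2uH} via a Klainerman--Sideris-type identity adapted to the incompressible elastic subsystem, which extracts the $\langle t-r\rangle$ weight using only the commuting fields $\partial,\tilde\Omega,\tilde S$ (since the system lacks Lorentz boosts).

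First I would start from the linearized pair in \eqref{sys-VF}, namely $\partial_t Z^a G - \nabla Z^a u = g_a$ and $\partial_t Z^a u = \nabla\!\cdot\! Z^a G - \nabla Z^a p + f_a$, and use the scaling identity $2t\partial_t = \tilde S - x\!\cdot\!\nabla - c_{\mathrm{hom}}$ (with $c_{\mathrm{hom}}$ the homogeneity constant fixed in Section \ref{sec-FS}) to write, schematically,
\begin{equation*}
2t\,\nabla Z^a u = \tilde S Z^a G - x\!\cdot\!\nabla Z^a G - Z^a G - 2t\, g_a,
\end{equation*}
and analogously for $2t\,\nabla\!\cdot\! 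Z^a G$ from the $u$-equation (with pressure removed by Leray projection). Next I would split $2t\partial_j = 2(t-r)\partial_j + 2r\partial_j$ and rewrite $r\partial_j = x_j\partial_r - (\omega\times\Omega)_j$; the tangential piece $(\omega\times\Omega)_j/r\cdot Z^a(u,G)$ is immediately controlled by a $Z^{|a|+1}$ quantity, so matters reduce to estimating the radial component $x_j\partial_r Z^a(u,G)$.

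At that point the constraints \eqref{curl-vf} play the role that boosts play in the classical wave setting: from $\nabla\!\cdot\! Z^a u = 0$ and $\nabla\!\cdot\! Z^a G^T = 0$ one obtains $\omega_j\partial_r Z^a u_j$ and $\omega_j\partial_r Z^a G_{ji}$ as purely angular expressions (hence bounded by $r^{-1}|\tilde\Omega Z^a(u,G)|$), while the symmetry-type constraint $\partial_j Z^a G_{ik}-\partial_k Z^a G_{ij}=\mathcal N_{a,jik}$ lets me exchange the remaining radial/tangential roles at the price of the quadratic error $\mathcal N_{a,jik}$ from \eqref{Na}. Combining these steps yields a pointwise inequality of the schematic form
\begin{equation*}
\langle t-r\rangle \big(|\nabla Z^a u| + |\nabla Z^a G|\big) \;\lesssim\; \sum_{|b|\leq |a|+1} |Z^b(u,G)| + \langle t-r\rangle\bigl(|f_a|+|g_a|+|\mathcal N_{a}|\bigr).
\end{equation*}

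Finally I would square, integrate, and sum over $|a|\leq k-1$. The linear part on the right is bounded by $C\,E_k$ directly from the definition. For the nonlinear contribution, each bilinear term in $f_a,g_a,\mathcal N_a$ has a low-frequency factor that I estimate in $L^\infty$ via the decay bounds \eqref{Decay-phi2} (giving $\|Z^{[N/2]+1}(u,G)\|_{L^\infty}\lesssim \epsilon_1\langle t\rangle^{-1+\delta}$) and a high-frequency factor that I leave as $\|\langle t-r\rangle\nabla Z^{c}(u,G)\|_{L^2}\lesssim X_k^{1/2}$, producing $\epsilon_1 X_k^{1/2}$-type contributions after using $\langle t-r\rangle\lesssim \langle t\rangle$; the $\psi$-quadratic term $\partial_j\mathrm{Re}(\psi\overline{Z^c\psi_j})$ in $f_a$ is treated identically using \eqref{Dec_psi}. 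Squaring these estimates produces the $C_3\epsilon_1 X_k$ factor on the right. The main obstacle is Step~2: correctly exploiting both the divergence-free constraints and the curl-type constraint \eqref{curl-vf} (with its quadratic error $\mathcal N_{a,jik}$) to trade the radial derivative $\partial_r$ for angular derivatives plus controllable errors, since unlike the wave-equation case we cannot call on Lorentz boosts. Once that geometric/algebraic reduction is carried out cleanly, the decay estimates from Section~\ref{sec-dec} do the rest.
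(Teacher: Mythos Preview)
Your strategy is the right one and matches the paper's in spirit, but two points in your sketch are off relative to how the argument actually closes.

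First, the weight on the nonlinear terms in the pointwise inequality is \emph{not} $\langle t-r\rangle$ but $t$ (and $t+r$ for $\mathcal N_a$): the identity $2t\partial_t=\tilde S-x\cdot\nabla-c_{\mathrm{hom}}$ that you invoke produces $2t\,g_a$, $2t\,f_a$, etc., and there is no mechanism to trade this down to $\langle t-r\rangle$. The paper packages these as $N_k=\sum_{|a|\le k-1}(t|f_a|+t|g_a|+(t+r)|\mathcal N_a|+t|\nabla Z^a p|)$ and proves $\|N_k\|_{L^2}\lesssim \epsilon_1 E_k^{1/2}$ directly from the $L^\infty$ decay \eqref{Decay-phi2}, \eqref{AwayCone}, \eqref{Dec_psi} on the low factor and the plain $L^2$ norm $\|\nabla Z^c(u,G)\|_{L^2}\le E_k^{1/2}$ on the high factor; the $\langle t-r\rangle$ weight and $X_k$ never enter here. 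Your scheme of absorbing the weight into $\|\langle t-r\rangle\nabla Z^c(u,G)\|_{L^2}$ does not work once the correct weight $t$ is in place.

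Second, the reduction from the full gradient $\nabla Z^a G$ to the divergence $\nabla\cdot Z^a G$ is carried out in the paper not pointwise but via an $L^2$ div--curl identity (Lemma~\ref{Qk}): one has
\[
\|(t-r)\nabla Z^a G\|_{L^2}^2 \le 4\|Z^a G\|_{L^2}^2 + C\|(t-r)\nabla\cdot Z^a G\|_{L^2}^2 + C\mathcal Q_a,
\]
where $\mathcal Q_a$ is the quadratic correction from $\mathcal N_{a,jik}$, and it is precisely the estimate $\mathcal Q_a\lesssim \epsilon_1 X_k+\epsilon_1 X_k^{1/2}E_k^{1/2}$ that supplies the $C_3\epsilon_1 X_k$ term in \eqref{L2uH}. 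The pointwise good-unknowns bound (Lemma~\ref{Lem5.5}) then handles only $|t-r|(|\nabla Z^a u|+|\nabla\cdot Z^a G|)$ through the decomposition $\nabla Z^a u=\tfrac12(\nabla Z^a u+\nabla\cdot Z^a G\otimes\omega)+\tfrac12(\nabla Z^a u-\nabla\cdot Z^a G\otimes\omega)$. Your proposal to obtain the full $\langle t-r\rangle|\nabla Z^a G|$ bound purely pointwise via the curl constraint is not how the paper proceeds, and you have not indicated how the resulting error terms would close; the $L^2$ route of Lemma~\ref{Qk} is the clean way.
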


Here we start with some useful lemmas.
\begin{lemma}  \label{Qk}
	Assume that $Z^a G$ satisfies the constraint \eqref{curl-vf}, then there holds
	\begin{align}   \label{H-(t-r)}
		\|(t-r)\nab Z^a G\|_{L^2}^2\leq 4 \|Z^a G\|_{L^2}^2+C\|(t-r)\nab \cdot Z^a G\|_{L^2}^2+C\QQ_a\,,
	\end{align}
where
\begin{equation*}
\QQ_a=\sum_{b+c=a}\int (t-r)^2 \d_j Z^a G_{ik}\big(Z^b G_{lk}\d_l Z^c G_{ij}-Z^b G_{lj}\d_lZ^c G_{ik}\big)\ dx \,.
\end{equation*}
Moreover,
\begin{align}       \label{Qa-est}
	\QQ_a\les  X_{|a|+1} E_{[|a|/2]+2}^{1/2}+X_{|a|+1}^{1/2}E^{1/2}_{|a|}
X_{[|a|/2]+3}^{1/2}\,.
\end{align}
\end{lemma}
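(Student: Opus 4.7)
The plan is to prove the two claims of Lemma~\ref{Qk} separately: first the Klainerman--Sideris type weighted estimate \eqref{H-(t-r)}, and then the trilinear bound \eqref{Qa-est} on $\QQ_a$.

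For \eqref{H-(t-r)}, the strategy is a row-by-row decomposition of the matrix $Z^aG$. For each fixed $i$, treat $v_{(i)}:=Z^aG_{i,\cdot}$ as a vector field in $\R^3$ (indexed by $j$), so that the row-curl $\d_j v_{(i),k}-\d_k v_{(i),j}=\mathcal N_{a,jik}$ is precisely what the constraint \eqref{curl-vf} controls, while the row-divergence $\sum_j\d_j Z^aG_{ij}$ is kept as an unknown. The key pointwise algebraic identity is
\begin{equation*}
|\nabla v|^2 = (\nabla\cdot v)^2 + \d_j v_k\bigl(\d_j v_k-\d_k v_j\bigr) + \d_j(v_k\d_k v_j)-\d_k(v_k\,\nabla\cdot v).
\end{equation*}
Multiplying by $(t-r)^2$, integrating, and using $\nab(t-r)^2=-2(t-r)\omega$ in the integration by parts of the two pure-divergence terms produces correction terms pointwise of size $O(|t-r||v||\nabla v|)$. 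Cauchy--Schwarz splits these as $\tfrac12\|(t-r)\nab v\|_{L^2}^2+C\|v\|_{L^2}^2$ and the first piece is absorbed into the left-hand side. The middle term, after substituting the row-curl constraint, becomes exactly $\int(t-r)^2\d_j Z^aG_{ik}\,\mathcal N_{a,jik}\,dx$, which matches $\QQ_a$ via the explicit form of $\mathcal N_{a,jik}$ in \eqref{Na}. Summing over $i$ yields \eqref{H-(t-r)}; the numerical constant $4$ comes from tracking the absorption.

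For \eqref{Qa-est}, I split the sum over $b+c=a$ in the expression for $\QQ_a$ according to whether $|b|\leq[|a|/2]$ or $|c|\leq[|a|/2]$ and distribute the three factors via H\"older. In the first regime the factor $Z^bG$ carries few vector fields, so by the three-dimensional Sobolev embedding $H^2\hookrightarrow L^\infty$ one has $\|Z^bG\|_{L^\infty}\les E_{[|a|/2]+2}^{1/2}$; the remaining two factors contribute $\|(t-r)\nab Z^aG\|_{L^2}\|(t-r)\nab Z^cG\|_{L^2}\les X_{|a|+1}$, producing the first term $X_{|a|+1}E_{[|a|/2]+2}^{1/2}$. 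In the second regime I commute pure spatial derivatives past the $(t-r)$ weight (first derivatives of $(t-r)$ are pointwise bounded and higher-order derivatives of $|x|$ produce $O(r^{-1})$ tails absorbed into low-index $E$ bounds) to obtain $\|(t-r)\nab Z^cG\|_{L^\infty}\les X_{[|a|/2]+3}^{1/2}$; pairing this with $\|Z^bG\|_{L^2}\les E_{|a|}^{1/2}$ and $\|(t-r)\nab Z^aG\|_{L^2}\les X_{|a|+1}^{1/2}$ produces the second term.

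The principal technical hurdle is in \eqref{H-(t-r)}: the constraint $\nab\cdot G^T=0$ controls only the column divergence $\sum_j\d_j G_{jk}$, whereas the row-by-row decomposition used here naturally produces the row divergence $\sum_j\d_j G_{ij}$, which must be retained on the right-hand side as $\|(t-r)\nab\cdot Z^aG\|_{L^2}^2$. One must verify carefully that the commutator terms generated by the integration by parts against $(t-r)^2$ yield only factors of order $|t-r|$ (not $(t-r)^2$), so that Cauchy--Schwarz against $\|Z^aG\|_{L^2}$ produces genuinely lower-order contributions rather than spoiling the absorption step and destroying the explicit constant $4$.
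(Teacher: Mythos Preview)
Your proposal is correct and follows essentially the same approach as the paper. For \eqref{H-(t-r)} the paper simply cites \cite[Lemma 6.2]{LeiWang} (noting it follows from \eqref{curl-vf} and integration by parts), and your row-by-row div--curl identity with integration by parts against $(t-r)^2$ is precisely the content of that reference; for \eqref{Qa-est} the paper performs exactly your H\"older splitting according to $|b|\le[|a|/2]$ versus $|c|\le[|a|/2]$, bounding $\|Z^{[|a|/2]}G\|_{L^\infty}\les E_{[|a|/2]+2}^{1/2}$ via Sobolev and $\|\langle t-r\rangle\nab Z^{[|a|/2]}G\|_{L^\infty}\les X_{[|a|/2]+3}^{1/2}$ via Sobolev with commutation through the weight, just as you describe.
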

\begin{proof}
The formula \eqref{H-(t-r)} has been proved in \cite[Lemma 6.2]{LeiWang}, which is obtained from \eqref{curl-vf} and integration by parts.
For the estimate to formula $\QQ_a$, by the Sobolev embeddings, \eqref{AwayCone} and \eqref{Decay-phi2}, we have
	\begin{align*}
		\QQ_a&\les \int \<t-r\>^2 |\nab Z^{a}G|\Big(|Z^{[|a|/2]}G\nab Z^{|a|}G|+|Z^{|a|}G\nab Z^{[|a|/2]}G|\Big)\ dx\\
		&\les \|\<t-r\>\nab Z^{a}G\|_{L^2}^2\||Z^{[|a|/2]}G\|_{\lf}
		 +\|\<t-r\>\nab Z^{a}G\|_{L^2}\| Z^{|a|}G\|_{L^2}\|\<t-r\>\nab Z^{[|a|/2]}G\|_{\lf}\\
		&\les X_{|a|+1} E_{[|a|/2]+2}^{1/2}+X_{|a|+1}^{1/2}E^{1/2}_{|a|}
X_{[|a|/2]+3}^{1/2}\,.
	\end{align*}
This completes the proof of Lemma \ref{Qk}.
\end{proof}

\begin{lemma}\label{Lem5.5}
	Assume that $(u,G,\psi)$ is the solution of \eqref{sys-VF} with nonlinearities $f_a,\ g_a$ and $\mathcal N_a$ given in \eqref{fa}, \eqref{ga} and \eqref{Na}, respectively.  We denote
	\begin{align}   \label{Nk}
		\mathcal L_k&:=\sum_{|a|\leq k}(|Z^a u|+|Z^a G|)\,,\qquad 
		N_k:=\sum_{|a|\leq k-1}\big(t|f_a|+t|g_a|+(t+r)|\mathcal N_a|+t|\nab Z^a p|\big)\,.
	\end{align}
Then for all $|a|\leq k-1$,
\begin{align}\label{goodun}
	(t\pm r)|\nab Z^a u\pm \nab \cdot Z^a G\otimes \om |\les \mathcal L_k+N_k\,.
\end{align}
\end{lemma}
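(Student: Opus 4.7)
The key observation is that the combination $\nabla Z^a u \pm \nabla\cdot Z^a G\otimes\omega$ can be rewritten as a time derivative of the ``null-aligned'' quantity $H^\pm_{ij} := Z^a G_{ij} \pm \omega_j Z^a u_i$, modulo forcing, pressure, and constraint errors. Indeed, using the $G$-equation of \eqref{sys-VF} to substitute $\d_j Z^a u_i = \d_t Z^a G_{ij} - g_{a,ij}$ and the $u$-equation to substitute $\sum_k\d_k Z^a G_{ik} = \d_t Z^a u_i + \d_i Z^a p - f_{a,i}$, and noting $\d_t\omega_j = 0$, one obtains
\[
\d_j Z^a u_i \pm \omega_j \sum_k \d_k Z^a G_{ik} = \d_t H^\pm_{ij} - g_{a,ij} \pm \omega_j(\d_i Z^a p - f_{a,i}).
\]
The error terms contribute $(t\pm r)|g_a| + (t\pm r)|\nabla Z^a p| + (t\pm r)|f_a|$, which in the interior region $r\leq t$ are controlled by $t|g_a| + t|\nabla Z^a p| + t|f_a| \subset N_k$; the exterior region $r\geq t$ is handled via the symmetric argument in Step~3 below.

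Second, I would convert the weighted time derivative $(t\pm r)\d_t H^\pm_{ij}$ into a sum of vector-field derivatives using the null identities
\[
(t+r)(\d_t + \d_r) = S + r\d_t + t\d_r,\qquad (t-r)(\d_t - \d_r) = S - r\d_t - t\d_r,
\]
so that $(t\pm r)\d_t H^\pm_{ij} = (t\pm r)(\d_t\pm\d_r) H^\pm_{ij} - (t\pm r)\d_r H^\pm_{ij}$, with the null piece expressible as $S H^\pm_{ij} \pm (r\d_t + t\d_r)H^\pm_{ij}$. Since $S$ differs from the perturbed scaling $\tilde S$ by a multiplicative constant, the identity $\tilde S H^\pm_{ij} = Z^{a'}G_{ij} + \omega_j Z^{a'} u_i$ (up to commutators, with $|a'|\leq k$) together with the bound $S\omega_j = 0$ shows $|S H^\pm_{ij}|\lesssim \mathcal{L}_k$.

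Third, the residual pieces $r\d_t H^\pm_{ij}$ and $t\d_r H^\pm_{ij}$ are re-expanded through the equations:
\begin{align*}
r\d_t Z^a u_i &= r(\nabla\cdot Z^a G)_i - r\d_i Z^a p + r f_{a,i},\\
t\d_r Z^a u_i &= t\omega_k(\d_t Z^a G_{ik} - g_{a,ik}),
\end{align*}
and analogous formulas for $G$. The scalar contributions $r|\nabla Z^a p|, r|f_a|, t|g_a|$ are absorbed into $N_k$ in the regime $r\leq t$ via $r\leq t$. The genuine spatial contributions $r\sum_k\d_k Z^a G_{ik}$ and $t\omega_k \d_t Z^a G_{ik}$ are reassembled (after using $r\d_r = \tilde S - 2t\d_t - 1$ and the curl constraint \eqref{curl-vf} to commute spatial derivatives of $G$ across the two indices) into a term controlled by $|\tilde S Z^a G|\lesssim \mathcal{L}_k$ plus a constraint error of the form $(t+r)|\mathcal{N}_a|$, which is exactly the weight furnished by the third contribution to $N_k$. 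The angular component $(t\pm r)r^{-1}\omega_k\Omega_{jk}Z^a u_i$ arising from the decomposition $\d_j = \omega_j\d_r - r^{-1}\omega_k\Omega_{jk}$ is treated by recognizing $\Omega_{jk}Z^a u = \tilde\Omega_{jk}Z^a u + O(|Z^a u|)$, with the ratio $(t\pm r)/r$ handled by the interior/exterior split.

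The main obstacle will be the careful exterior-interior case analysis (regions $r\leq t$ versus $r\geq t$) required to convert between the weights $(t\pm r)$ and $t$ in $N_k$, and the systematic exploitation of the curl constraint \eqref{curl-vf} to swap between $\d_j Z^a G_{ik}$ and $\d_k Z^a G_{ij}$ (producing the $\mathcal{N}_a$ error that $N_k$ is tailored to absorb). A secondary bookkeeping issue is tracking the commutators $[\tilde S, Z^a]$ and $[\tilde\Omega, Z^a]$, which only introduce lower-order $Z$-derivatives and therefore stay inside $\mathcal{L}_k$.
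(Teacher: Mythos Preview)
The paper gives no proof of its own here; it simply refers the reader to Lemma~6.3 of Lei--Wang \cite{LeiWang}. Your plan is precisely the strategy of that reference: rewrite the target combination $\nabla Z^au\pm(\nabla\!\cdot Z^aG)\otimes\omega$ via the two evolution equations as $\partial_t H^\pm$ modulo $f_a,g_a,\nabla Z^ap$ errors, then trade the weight $(t\pm r)$ on $\partial_t$ for the scaling and rotation fields through the null identities, with the curl constraint \eqref{curl-vf} invoked whenever spatial indices of $\nabla G$ must be swapped (this is exactly why $N_k$ carries $(t+r)|\mathcal N_a|$). The interior/exterior split you describe is also how the argument handles the factor $(t\pm r)/r$ on the angular remainder.

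Two small points to tighten. First, your decomposition $(t\pm r)\partial_t=(t\pm r)(\partial_t\pm\partial_r)-(t\pm r)\partial_r$ is only correct for the ``$+$'' sign; in the ``$-$'' case the last term acquires the opposite sign, i.e.\ $(t\pm r)\partial_t=(t\pm r)(\partial_t\pm\partial_r)\mp(t\pm r)\partial_r$. Second, the identity $(t\pm r)(\partial_t\pm\partial_r)=S\pm(r\partial_t+t\partial_r)$ that you quote holds for the \emph{wave} scaling $S_w=t\partial_t+r\partial_r$, which is what Lei--Wang use. In the present paper the scaling field is the Schr\"odinger scaling $S=2t\partial_t+r\partial_r$, so $S$ and $S_w$ differ by $t\partial_t$, not by ``a multiplicative constant'' as you write. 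This is harmless in the end---one substitutes $t\partial_t Z^aG=t(\nabla Z^au+g_a)$ and $t\partial_t Z^au=t(\nabla\!\cdot Z^aG-\nabla Z^ap+f_a)$ from the equations and closes by combining the resulting inequalities for $(t\pm r)\partial_r V^\pm$ with the transport relation $(\partial_t\mp\partial_r)V^\pm=O(r^{-1}\mathcal L_k+|f_a|+|g_a|+|\nabla Z^ap|)$---but this recycling step should be stated explicitly rather than swept into ``differs by a constant.''
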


\begin{proof}
For the proof of the lemma, we can refer to Lemma 6.3 in \cite{LeiWang}.
\end{proof}

\begin{lemma}\label{NK0}
	Let $N_k$ be the nonlinear term  \eqref{Nk}. Then we have
	\begin{align}     \label{Nk-est}
			\|N_k\|_{L^2}&\les \ep_1 E_k^{1/2}\,.
	\end{align}
\end{lemma}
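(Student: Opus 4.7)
The norm $\|N_k\|_{L^2}$ splits into four pieces coming from $tf_a$, $tg_a$, $(t+r)\mathcal N_a$, and $t\nabla Z^a p$ for $|a|\leq k-1$; I would estimate each by a bilinear or trilinear $L^\infty$--$L^2$ product bound in which the weight $t$ or $(t+r)$ is absorbed by the pointwise decays of Section \ref{sec-dec} acting on the \emph{low-order} factor, leaving an $\ep_1$-small coefficient, while the high-order factor is controlled in $L^2$ by $E_k^{1/2}$.

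For $\|tf_a\|_{L^2}+\|tg_a\|_{L^2}$, I would expand \eqref{fa}--\eqref{ga} and, for each bilinear $Z^b(u,G,\psi)\cdot\nabla Z^c(u,G,\psi)$ with $b+c=a$, split by whichever of $|b|$, $|c|$ is at most $[(k-1)/2]$. In the low-$b$ case, the temporal decay \eqref{Decay-phi2} and the bootstrap yield $t\|Z^b(u,G)\|_{L^\infty}\lesssim \ep_1$, and the complementary $L^2$ factor is bounded by $E_k^{1/2}$; the low-$c$ case is symmetric, since $\nabla Z^c$ has Sobolev order at most $[(k-1)/2]+1\leq N-3$, still within reach of \eqref{Decay-phi2}. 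The $\psi$-bilinears $\d_j\mathrm{Re}(Z^b\psi\overline{Z^c\psi_j})$ inside $f_a$ are the one subtlety, since \eqref{Decay-phi2} is unavailable for $\psi$; instead I would use Morrey's inequality $\|f\|_{L^\infty}\lesssim \|f\|_{W^{1,6}}$ together with the $L^6$-decay \eqref{Dec_psi} to obtain $t\|Z^{\mathrm{low}}\psi\|_{L^\infty}\lesssim \ep_1$, and then proceed as in the $(u,G)$ case.

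The weight $(t+r)$ in $(t+r)|\mathcal N_a|$ is absorbed analogously: combining the $\langle r\rangle$-pointwise bound \eqref{AwayCone} with the $\langle t\rangle$-pointwise bound \eqref{Decay-phi2} and $\langle t+r\rangle\lesssim\langle t\rangle+\langle r\rangle$ yields $(t+r)|Z^{\mathrm{low}}G|\lesssim \ep_1$, after which $\nabla Z^{\mathrm{high}}G$ is placed in $L^2$. For the pressure term I would take the divergence of the momentum equation in \eqref{sys-2}: because $\mathrm{div}\,u=0$, the time derivative drops, and the linear contribution $\d_i(\nabla\cdot G)_i=\d_i\d_jG_{ij}$ also vanishes because swapping the partials rewrites it as $\d_j\d_iG_{ij}$, which the constraint $\nabla\cdot G^T=0$ (i.e.\ $\d_iG_{ji}=0$) kills after relabeling. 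Hence $\Delta p$ equals the divergence of the purely quadratic right-hand side of the $u$-equation, and Calder\'on--Zygmund theory, together with the fact that commutators of $Z^a$ with Riesz transforms produce at most $|a|$ vector fields acting on the same quadratics, reduces $t\|\nabla Z^a p\|_{L^2}$ to the estimate for $tf_a$ already handled.

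The principal obstacle is twofold. First, the pressure reduction depends on the cancellation of the linear $\nabla\cdot G$ contribution; this is subtle because $G$ is itself not divergence free, and the argument leans on both $\nabla\cdot G^T=0$ and the commutativity of partials. Second, $\psi$ carries no $L^\infty$ decay directly from the vector-field energy, so the $\psi$-bilinears in $f_a$ force the Morrey-plus-\eqref{Dec_psi} detour; this is precisely where the assumption $N\geq 9$ enters, keeping the low-order Sobolev index below $N-3$ so that both the $L^6$ decay of $\psi$ and the $L^\infty$ decay of $(u,G)$ remain uniformly of size $\ep_1$.
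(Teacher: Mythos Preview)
Your proposal is correct and follows essentially the same route as the paper. The paper also reduces the pressure via $\Delta Z^a p=\nabla\cdot f_a$ (using $\div Z^a u=0$ and $\nabla\cdot Z^a G^T=0$ exactly as you describe) to get $\|\nabla Z^a p\|_{L^2}\lesssim\|f_a\|_{L^2}$, then bounds all bilinears by placing the low-order factor in $L^\infty$ via \eqref{Decay-phi2} for $(u,G)$, Sobolev plus \eqref{Dec_psi} for $\psi$, and \eqref{AwayCone} for the $r$-weighted piece of $\mathcal N_a$; your commutator-with-Riesz-transforms step is unnecessary since the paper works with \eqref{sys-VF}, where $\nabla Z^a p$ already appears, but it does no harm.
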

\begin{proof}
Applying the operator $\nab\cdot$ to the $Z^a u$-equation in \eqref{sys-VF} leads to $\De Z^a p=\nab \cdot f_a$, which gives
$\|\nab Z^a p\|_{L^2}\les \|f_a\|_{L^2}$.
Thus we have
\begin{align*}
	\|N_k\|_{L^2}\les \sum_{|a|\leq k-1}\Big(t\|f_a\|_{L^2}+t\|g_a\|_{L^2}+\|(t+r)\mathcal N_a\|_{L^2}\Big)\,.
\end{align*}
According to the definitions of $f_a$ \eqref{fa}, $g_a$ \eqref{ga} and $\mathcal N_a$ \eqref{Na}, the above is bounded by
\begin{align*}
    \|N_k\|_{L^2}&\les \sum_{b+c=a;|a|\leq k-1}\<t\>\big\|(|Z^b u|+|Z^b G|+|Z^b \psi|)(|\nab Z^c u|+|\nab Z^c G|+|\nab Z^c\psi|)\big\|_{L^2}\\
 &\quad +\sum_{b+c=a;|a|\leq k-1} \big\| r |Z^b G \nab Z^c G|\big\|_{L^2}
 := I_1+I_2\,.
\end{align*}

By $N\geq 9$, the decays \eqref{Decay-phi2} and \eqref{Dec_psi}, we bound the first term $I_1$ by
\begin{align*}
    I_1\les E_{k}^{1/2} \<t\>\| |Z^{[(k-1)/2]+1}u|+|Z^{[(k-1)/2]+1}G|+|Z^{[(k-1)/2]+1}\psi|\|_{\lf}
    \les \ep_1 E^{1/2}_k\,.
\end{align*}
We use \eqref{AwayCone} to bound the second term,
\begin{align*}
    I_2\les E_k^{1/2}\|r\ |Z^{[(k-1)/2]+1}G|\|_{\lf} \les E_k^{1/2}E^{1/2}_{[(k-1)/2]+3}\les \ep_1 E_k^{1/2}\,.
\end{align*}
This completes the proof of \eqref{Nk-est}.
\end{proof}

Utilize the above three lemmas, we return to the proof of estimate \eqref{L2uH}.
\begin{proof}[Proof of the estimate \eqref{L2uH}]
	Noticing the definition of $X_k$ \eqref{XX}, applying $\<t-r\>\leq 1+|t-r|$ and \eqref{H-(t-r)}, we have
	\begin{align}       \nonumber
		X_{k}&=\sum_{|a|\leq k-1} \Big(\|\<t-r\>\nab Z^a u\|_{L^2}^2+\|\<t-r\>\nab Z^a G\|_{L^2}^2\Big)\\\nonumber
		&\leq 2 E_{k}+\sum_{|a|\leq k-1} \Big(\||t-r|\nab Z^a u\|_{L^2}^2+\||t-r|\nab Z^a G\|_{L^2}^2\Big)\\  \label{XuH-est}
		&\leq 6 E_{k}+C\sum_{|a|\leq k-1} \Big(\||t-r|\nab Z^a u\|_{L^2}^2+\||t-r|\nab\cdot Z^a G\|_{L^2}^2\Big)+C\sum_{|a|\leq k-1}\QQ_a\,.
	\end{align}
Thanks to
\begin{align*}
	\nab Z^a u&=\frac{1}{2}(\nab Z^a u+\nab \cdot Z^a G\otimes \om)+\frac{1}{2}(\nab Z^a u-\nab \cdot Z^a G\otimes \om),\\
	\nab \cdot Z^a G&=\frac{1}{2}(\nab Z^a u+\nab \cdot Z^a G\otimes \om)\om-\frac{1}{2}(\nab Z^a u-\nab \cdot Z^a G\otimes \om)\om\,,
\end{align*}
we obtain
\begin{align*}
	|t-r|(|\nab Z^a u|+|\nab\cdot Z^a G|)&\les |t+r||\nab Z^a u+\nab \cdot Z^a G\otimes \om|
	+|t-r||\nab Z^a u-\nab \cdot Z^a G\otimes \om|\,.
\end{align*}
The above estimate together with \eqref{goodun} implies that, for any $|a|\leq k-1$,
\begin{align}   \label{second-term}
	\||t-r|\nab Z^a u\|_{L^2}^2+\||t-r|\nab\cdot Z^a G\|_{L^2}^2\les E_k+\|N_k\|_{L^2}^2\,.
\end{align}
Collecting the estimates \eqref{XuH-est} and \eqref{second-term},  applying \eqref{Nk-est} and \eqref{Qa-est}, we arrive at
\begin{equation*}   
\begin{aligned}
X_{k}&\les E_{k}+\|N_k\|_{L^2}^2+\sum_{|a|\leq k-1}\QQ_a\\
&\les  E_{k}+\ep_1^2 E_k+\ep_1X_k+\ep_1 X_k^{1/2}E^{1/2}_{k-1}\leq C_3 \ep_1 X_k+\frac{C_3}{2}(1+\ep_1^2+\ep_1)E_k\,.
\end{aligned}	
\end{equation*}
Therefore the desired bound in \eqref{L2uH} is obtained.
\end{proof}

\subsection{Weighted estimates of profile for Schr\"odinger flow}
Next, we prove the $L^2$ weighted bound \eqref{Main_Prop_result2} for $Y_N$ and $Y_{N-2}$.
\begin{prop} \label{Prop_Psia}
	With the hypothesis in Proposition \ref{Main_Prop}, for any $t\in[0,T]$ and the profile $\Psi^{(a)}=e^{it\De}Z^a\psi$, we have
\begin{align}\label{YN-1}
&Y^{1/2}_{N}\leq 2E_N^{1/2}+ C_4 \ep_1^2\<t\>^\de\,,\\ \label{YN-3}
&Y^{1/2}_{N-2}\leq 2E_{N-2}^{1/2}+C_4\ep_1^2\<t\>^{-1/3+\de}\,.
\end{align}
\end{prop}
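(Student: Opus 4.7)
The plan is to derive a Klainerman–Sideris-type identity on the profile $\Psi^{(a)}=e^{it\Delta}Z^a\psi$ that reduces the bound on $Y_j^{1/2}$ to the generalized energy $E_j^{1/2}$ and a nonlinear $|\nabla|^{-1}L^2$ bound on the Schr\"odinger nonlinearity $h_a$ from \eqref{ha}. The starting algebraic observation is $[S,t\Delta]=0$: since $[S,t]=2t$ and $[S,\Delta]=-2\Delta$, the two contributions cancel, so the scaling operator $S=2t\partial_t+x\cdot\nabla$ commutes with the linear Schr\"odinger propagator, and $S\Psi^{(a)}=e^{it\Delta}SZ^a\psi$. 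Combined with $\partial_t\Psi^{(a)}=-ie^{it\Delta}h_a$ (a direct consequence of the $Z^a\psi$ equation in \eqref{sys-VF}) and the splitting $x\cdot\nabla=S-2t\partial_t$, this yields $x\cdot\nabla\Psi^{(a)}=e^{it\Delta}(SZ^a\psi+2it\,h_a)$.

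Applying $|\nabla|^{-1}$, using the scaling commutator $[x\cdot\nabla,|\nabla|^{-1}]=|\nabla|^{-1}$, and absorbing the leftover $|\nabla|^{-1}\Psi^{(a)}$ into $(S+1)Z^a\psi=\tilde S Z^a\psi$, I arrive at the clean identity
\begin{equation*}
x\cdot\nabla|\nabla|^{-1}\Psi^{(a)}=e^{it\Delta}\bigl[\,|\nabla|^{-1}\tilde S Z^a\psi+2it\,|\nabla|^{-1}h_a\,\bigr].
\end{equation*}
Since $e^{it\Delta}$ is an $L^2$-isometry and $\tilde S Z^a\psi=Z^b\psi$ with $|b|=|a|+1$ is one of the summands appearing in $E_{|a|+1}$, the $\ell^2$-triangle inequality applied to the sum over $|a|\leq j-1$ gives
\begin{equation*}
Y_j^{1/2}\leq 2E_j^{1/2}+2t\Bigl(\sum_{|a|\leq j-1}\||\nabla|^{-1}h_a\|_{L^2}^2\Bigr)^{1/2},
\end{equation*}
so the linear part already accounts for the $2E_j^{1/2}$ contribution in both \eqref{YN-1} and \eqref{YN-3}.

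The remaining task is to bound the nonlinear piece by $C_4\ep_1^2\<t\>^\de$ for $j=N$ and by $C_4\ep_1^2\<t\>^{-1/3+\de}$ for $j=N-2$. Using Hardy–Littlewood–Sobolev $\||\nabla|^{-1}f\|_{L^2}\lesssim\|f\|_{L^{6/5}}$ in $\R^3$, I inspect each group of terms in $h_a$ from \eqref{has}---the quadratic terms $Z^b(u-2A)\cdot\nabla Z^c\psi$ and $\nabla Z^b u\cdot Z^c\psi$, the quadratic $Z^bA_{d+1}Z^c\psi$, and the cubic terms $Z^b(u-A)\cdot Z^cA\cdot Z^e\psi$ and $\Im(Z^b\psi_l\overline{Z^c\psi})Z^e\psi_l$. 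In the high-regularity case I place the factor carrying the maximal number of vector fields in $L^2$ (controlled by $E_N^{1/2}\lesssim\ep_1\<t\>^\de$) and the remaining factors in $L^\infty$ or $L^3$, invoking the decays already collected in Section \ref{sec-dec}: \eqref{Decay-phi2}, \eqref{Dec_psi}, \eqref{Dec_d1/2-A}, \eqref{Dec_dA}, and \eqref{Dec_Ad+1-L2}–\eqref{Dec_Ad+1L3}. The arithmetic parallels the bound of $J_3$ in Section \ref{sec-HighEnergy} and yields $\||\nabla|^{-1}h_a\|_{L^2}\lesssim\ep_1^2\<t\>^{-1+\de}$, so multiplication by $t$ gives $\ep_1^2\<t\>^\de$ as required.

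The main obstacle is the low-regularity bound $Y_{N-2}^{1/2}$, which demands the sharper $\<t\>^{-4/3+\de}$ decay for $\||\nabla|^{-1}h_a\|_{L^2}$ when $|a|\leq N-3$. This is produced by the same mechanism as in the lower-order energy estimate \eqref{E2}: one interpolates between $\|Z^{N-3}(u,A)\|_{L^2}\lesssim\ep_1$ and the $L^\infty$ decay $\|Z^{N-2}(u,A)\|_{L^\infty}\lesssim\ep_1\<t\>^{-1+\de}$ to extract an extra $\<t\>^{-1/3}$ in the quadratic $Z^b(u-2A)\cdot\nabla Z^c\psi$ contribution. For the cubic terms the elliptic bound $\|Z^NA\|_{L^2}\lesssim\ep_1^2\<t\>^{-1/2+\de}$ from \eqref{Dec_d1/2-A} is decisive in keeping the decay budget from being saturated, and whenever a term is naturally of divergence form one replaces $|\nabla|^{-1}\partial_j$ by the Riesz transform $\mathcal R_j$ to avoid losing a derivative; the entire bookkeeping mirrors the estimates of $L_1$, $L_{2,1}$ and $L_3$ in Section \ref{sec-LowEnergy}.
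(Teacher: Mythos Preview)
Your reduction to the nonlinear term is correct and matches the paper's: the identity $x\cdot\nabla|\nabla|^{-1}\Psi^{(a)}=e^{it\Delta}\bigl[|\nabla|^{-1}\tilde S Z^a\psi+2it\,|\nabla|^{-1}h_a\bigr]$ is a clean version of the paper's inequality \eqref{keyIneq}, and both approaches reduce the proposition to bounding $t\||\nabla|^{-1}h_a\|_{L^2}$.

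The gap is in your treatment of the quadratic term $h_{a,2}=\nabla Z^b u\cdot Z^c\psi$. Your stated high-order strategy---put the factor with most vector fields in $L^2$, the rest in $L^\infty$ or $L^3$, and appeal to the arithmetic of $J_3$---does not close here. Concretely, for the $m$-th component $\partial_m Z^b u_j\,Z^c\psi_j$ one is forced after a Leibniz split to estimate $\||\nabla|^{-1}(Z^b u_j\,\partial_m Z^c\psi_j)\|_{L^2}$; via HLS this costs $\|Z^b u\|_{L^3}\|\nabla Z^c\psi\|_{L^2}$, and interpolation on the $u$-factor only yields $\ep_1\langle t\rangle^{-1/3}$, giving at best $\ep_1^2\langle t\rangle^{-1/3+\de}$---a full $\langle t\rangle^{2/3}$ short of what is needed for \eqref{YN-1}. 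The same loss occurs when $|b|$ is large. Saying the term is ``naturally of divergence form'' is not true for $h_{a,2}$ as written.

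What rescues this term, and what the paper uses explicitly in \eqref{ThirdTerm}, is the compatibility condition \eqref{comm}: one rewrites $Z^b u_j\,\partial_m Z^c\psi_j = Z^b u_j\,\partial_j Z^c\psi_m + (\text{cubic in }A,\psi)$, and only then does $\div Z^b u=0$ convert the main piece into the genuine divergence $\partial_j(Z^b u_j\,Z^c\psi_m)$, for which the Riesz-transform trick gives $\|Z^b u\,Z^c\psi\|_{L^2}\lesssim\ep_1^2\langle t\rangle^{-1+\de}$. You invoke this mechanism only obliquely for the low-order case (via the reference to $L_{2,1}$), but it is equally indispensable at top order. Once \eqref{comm} is inserted into your argument for $h_{a,2}$ in both regimes, the rest of your outline goes through as in the paper.
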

\begin{proof}
By scaling vector field $S=2t\d_t+x\cdot\nab$, the fact that $[e^{it\De},S]=0$ and \eqref{Main_Prop_result1}, we have
	\begin{equation}\label{keyIneq}
	\begin{aligned}
	&\lV x\cdot\nab|\nab|^{-1}\Psi^{(a)}\rV_{L^2}\leq  \lV S|\nab|^{-1}\Psi^{(a)}\rV_{L^2}+2\lV t\d_t|\nab|^{-1}\Psi^{(a)}\rV_{L^2}\\
	\leq &\  \lV S |\nab|^{-1} Z^a\psi\rV_{L^2}+2\lV t\d_t|\nab|^{-1}\Psi^{(a)}\rV_{L^2}
    \leq 2 E^{1/2}_{|a|+1}+2t\lV \d_t |\nab|^{-1}\Psi^{(a)}\rV_{L^2}\,.
	\end{aligned}
	\end{equation}
    Then it suffices to estimate the last term $\d_t|\nab|^{-1}\Psi^{(a)}$ in the right-hand side of \eqref{keyIneq}. By $\Psi^{(a)}=e^{it\De} Z^a\psi$ and the $\psi$-equation in \eqref{sys-VF}, we have
    \begin{align*}
        \|\d_t|\nab|^{-1}\Psi^{(a)}\|_{L^2}&= \||\nab|^{-1}\d_t(e^{it\De}Z^a\psi)\|_{L^2}  =  \|-ie^{it\De} |\nab|^{-1}(i\d_t Z^a\psi-\De Z^a\psi)\|_{L^2}\\
        &\leq \||\nab|^{-1}(i\d_t Z^a\psi-\De Z^a\psi)\|_{L^2}=\||\nab|^{-1}h_a\|_{L^2}\,.
    \end{align*}
    From the expression \eqref{ha} of $h_a$, it suffices to bound the following nonlinearities in $L^2$
    \begin{equation}\label{dtPsi_Term2}
    \begin{aligned}	
	|\nab|^{-1}h_a\approx &\sum_{b+c=a} |\nab|^{-1}\big( Z^b(u+A)\cdot\nab Z^c\psi+\nab Z^b u_j\cdot Z^c\psi_j\big)+
	 |\nab|^{-1}\NN^{(a)}\,,
	\end{aligned}
    \end{equation}
where $\NN^{(a)}$ is defined as
\begin{align*}
    \NN^{(a)}:&=\sum_{b+c=a}C_a^b Z^b A_{d+1} Z^c\psi+\sum_{b+c+e=a} C_a^{b,c} \big(Z^b (A+u)\cdot Z^c A Z^e\psi+i\Im(Z^b \psi_l Z^c\bar{\psi})Z^e\psi_l\big).
\end{align*}

\emph{Step 1. We prove the bound \eqref{YN-1}.}

\emph{i) Estimate of $Z^b (u+A)\cdot\nab Z^c\psi$.} By $\div Z^b u=0$, we move the gradient $\nab$ outside the formula. Then using H\"older's inequality, \eqref{Dec_psi} and \eqref{Decay-phi2}, we have
\begin{align*}
&\sum_{b+c=a}\lV |\nab|^{-1}( Z^bu\cdot\nab Z^c\psi)\rV_{L^2}
\les \sum_{b+c=a} \lV |\nab|^{-1}\nab\cdot(Z^b u Z^c\psi)\rV_{L^2}\\
&\les  \lV Z^{|a|} u \|_{L^3} \|Z^{[|a|/2]}\psi\rV_{L^6}+ \lV Z^{[|a|/2]} u \|_{\lf} \|Z^{|a|}\psi\rV_{L^2}\\
&\les E^{1/2}_{|a|+1}\<t\>^{-1}(Y^{1/2}_{[|a|/2]+1}+E^{1/2}_{[|a|/2]+1})+ \<t\>^{-1}(E^{1/2}_{[|a|/2]+2}+X^{1/2}_{[|a|/2]+2})E^{1/2}_{|a|}\,.
\end{align*}
For $|a|\leq N-1$, by \eqref{Main_Prop_Ass1} and \eqref{Main_Prop_Ass2}, we obtain
\begin{align}  \label{FirstTerm}
    \sum_{b+c=a;|a|\leq N-1}\lV |\nab|^{-1}( Z^bu\cdot\nab Z^c\psi)\rV_{L^2}
    \les \ep_1 \<t\>^{-1+\de}\ep_1+\<t\>^{-1+\de}\ep_1^2\les \ep_1^2\<t\>^{-1+\de}\,.
\end{align}
Using $\div Z^b A=0$, \eqref{Dec_d1/2-A} and \eqref{Dec_dA}, we can also bound the term $Z^b A\cdot\nab Z^c\psi$ by $\ep_1^3 \<t\>^{-3/2+\de}$ in a same way.

\emph{ii) Estimate of $\nab Z^b u\cdot Z^c\psi$.} 
By Leibniz formula, we have
\begin{align*}
    |\nab|^{-1}(\nab Z^b u_j\cdot Z^c\psi_j)
    = |\nab|^{-1}\nab( Z^b u_j\cdot Z^c\psi_j)-|\nab|^{-1}( Z^b u_j\cdot \d Z^c\psi_j)\,.
\end{align*}
Using the constraint \eqref{comm} and $\div Z^b u=0$, the second term in the right hand side can be rewritten as
\begin{align*}      \nonumber 
      Z^b u_j\cdot \d Z^c\psi_j
    &=Z^b u_j\cdot \d_j Z^c\psi-\sum_{c_1+c_2=c}i\big( Z^b u_j\cdot (Z^{c_1} A Z^{c_2}\psi_j-Z^{c_1} A_j Z^{c_2}\psi)\big)\\  
    &=\d_j( Z^b u_j\cdot  Z^c\psi)-\sum_{c_1+c_2=c}i\big( Z^b u_j\cdot (Z^{c_1} A Z^{c_2}\psi_j-Z^{c_1} A_j Z^{c_2}\psi)\big)\,.
\end{align*}
Hence, we obtain
\begin{align}    \label{ThirdTerm}
	\||\nab|^{-1}(\nab Z^b u_j\cdot Z^c \psi_j)\|_{L^2}\les \|Z^b uZ^c\psi\|_{L^2}+\sum_{c_1+c_2=c}\| |Z^b u|\ |Z^{c_1} A| |Z^{c_2}\psi|\|_{L^{6/5}}\,.
\end{align}

Now along the same line as the case \eqref{FirstTerm}, we bound the quadratic term $\|Z^b uZ^c\psi\|_{L^2}$ by $\ep_1^2\<t\>^{-1+\de}$.
For the cubic terms in the right hand side of \eqref{ThirdTerm}, by \eqref{Main_Prop_Ass1}, \eqref{Dec_d1/2-A}, \eqref{Decay-phi2} and \eqref{Dec_psi}, we have
\begin{equation}\label{Third-High}
    \begin{aligned}
    &\quad  \sum_{|b+c_1+c_2|\leq N-1}\| |Z^b u| | Z^{c_1} A| \ |Z^{c_2}\psi|\|_{L^{6/5}}\\
    &\les \|Z^N A\|_{L^2}(\|Z^{N-1}u\|_{L^2}\|Z^{[(N-1)/2]}\psi\|_{L^\infty}+\|Z^{[(N-1)/2]}u\|_{L^\infty}\|Z^{N-1}\psi\|_{L^2})\\
    &\les \ep_1^2\<t\>^{-1/2+\de}(\ep_1\<t\>^\de \ep_1\<t\>^{-1})\les \ep_1^4 \<t\>^{-3/2+2\de}\,.
\end{aligned}
\end{equation}
Hence, from the above two bounds, we obtain
\begin{align}   \label{Thi-Est}
    \sum_{|b+c|\leq N-1} \| |\nab|^{-1}(\nab Z^b u_j\cdot Z^c\psi_j)\|_{L^2}\les \ep_1^2\<t\>^{-1+\de}\,.
\end{align}

\emph{iii) Estimates of $\mathcal N^{(a)}$.} To bound $|\nab|^{-1}\mathcal N^{(a)}$ in $L^2$, from Sobolev embedding it suffices to bound $\NN^{(a)}$ in $L^{6/5}$. By \eqref{Dec_Ad+1-L2}, \eqref{Dec_psi} and \eqref{Dec_Ad+1L3} we have
\begin{align*}
    \sum_{|b+c|\leq N-1} \|Z^b A_{d+1} Z^c\psi\|_{L^{6/5}}
    &\les \|Z^{N-1} A_{d+1} \|_{L^2} \| Z^{[(N-1)/2]}\psi\|_{L^{3}}+\|Z^{[(N-1)/2]} A_{d+1} \|_{L^3} \| Z^{N-1}\psi\|_{L^{2}}\\
    &\les \ep_1^2\<t\>^{-1+\de}\ep_1\<t\>^{-1/2}+ \ep_1^2\<t\>^{-2+\de}\ep_1 \<t\>^\de
    \les \ep_1^3\<t\>^{-3/2+\de}\,.
\end{align*}
Since the decay estimates of $\|Z^{N-1} A\|_{L^2}$ and $\|Z^{[(N-1)/2]}A\|_{\lf}$ are better than that of $u$, thus by \eqref{Third-High} we also bound the $L^2$-norm of $Z^b (A+u)\cdot Z^c A Z^e\psi$ by $\ep_1^4 \<t\>^{-2+\de}$.
Similarly, by \eqref{Dec_psi} we can bound
\begin{align*}
    \sum_{|b+c+e|\leq N-1}\| \Im(Z^b \psi_l Z^c\bar{\psi})Z^e\psi_l\|_{L^{6/5}}\les \|Z^{N-1}\psi\|_{L^2}\|Z^{[(N-1)/2]}\psi\|_{L^6}^2\les \ep_1^3\<t\>^{-2+\de}\,.
\end{align*}
Hence, we obtain the estimate
\begin{align} \label{Forth-Est}
    \sum_{|a|\leq N-1}\| |\nab|^{-1}\mathcal N^{(a)}\|_{L^2}\lesssim \sum_{|a|\leq N-1}\| \mathcal N^{(a)}\|_{L^{6/5}}\les \ep_1^3\<t\>^{-3/2+\de}\,.
\end{align}

Collecting the estimates \eqref{FirstTerm}, \eqref{Thi-Est} and \eqref{Forth-Est}, the $\||\nab|^{-1}h_a\|_{L^2}$ for $|a|\leq N-1$ is bounded by $\ep_1^2\<t\>^{-1+\de}$,
which together with \eqref{keyIneq} gives the estimate \eqref{YN-1}.

 \smallskip 
\emph{Step 2. We prove the bound \eqref{YN-3}.}
	
\emph{i) Estimate of $Z^b u\cdot\nab Z^c\psi$ and $Z^b A\cdot \nab Z^c\psi$.} By $\div Z^b u=0$, we move the gradient $\nab$ outside the formula. Then using H\"older's inequality, \eqref{Decay-phi2} and \eqref{Dec_psi}, we have
\begin{equation} \label{First-N-3}
\begin{aligned}
&\sum_{|b+c|\leq N-3}\lV |\nab|^{-1}( Z^bu\cdot\nab Z^c\psi)\rV_{L^2}
\les \sum_{|b+c|\leq N-3} \lV |\nab|^{-1}\nab\cdot(Z^b u Z^c\psi)\rV_{L^2}\\
&\les  \lV Z^{N-3} u \|_{L^3} \|Z^{N-3}\psi\rV_{L^6}
\les \ep_1\<t\>^{-1/3+\de/3} \ep_1\<t\>^{-1}
\les \ep_1^2 \<t\>^{-4/3+\de}\,.
\end{aligned}
\end{equation}
In a same way, by $\div Z^b A=0$, \eqref{Dec_d1/2-A} and \eqref{Dec_psi}, we can bound the other term $Z^b A\cdot \nab Z^c\psi$ by $\ep_1^2 \<t\>^{-3/2+\de}$.

\emph{ii) Estimate of $\nab Z^b u_j\cdot Z^c\psi_j$.} From the formula \eqref{ThirdTerm}, by \eqref{First-N-3} and \eqref{Third-High}, we obtain
\begin{align*}
    &\sum_{|b+c|\leq N-3}\||\nab|^{-1}(\nab Z^b u_j\cdot Z^c\psi_j)\|_{L^2}\\
    &\les  \sum_{|b+c|\leq N-3}\|Z^b u\cdot Z^c\psi\|_{L^2}
    +\sum_{|b+c_1+c_2|\leq N-3}\||\nab|^{-1}(|Z^b u| |Z^{c_1} A| |Z^{c_2}\psi)|\|_{L^2}\\
    &\les \ep_1 \<t\>^{-4/3+\de}+\ep_1^4\<t\>^{-3/2+2\de}\les \ep_1 \<t\>^{-4/3+\de}\,.
\end{align*}

\emph{iii) Estimate of $\mathcal N^{(a)}$.} Using the similar argument to \eqref{Forth-Est}, we can bound the $L^2$-norm of the last term in \eqref{dtPsi_Term2} by $\ep_1^3\<t\>^{-3/2+\de}$.

From the above estimates, we obtain
\begin{equation}  \label{dtPsia}
    \sum_{|a|\leq N-3} \|\d_t |\nab|^{-1}\Psi^{(a)}\|_{L^2}\leq \sum_{|a|\leq N-3} \| |\nab|^{-1}h_a\|_{L^2}\les \ep_1^2\<t\>^{-4/3+\de}\,,
\end{equation}
which, combined with \eqref{keyIneq}, gives the bound \eqref{YN-3}.	This completes the proof of the lemma.
\end{proof}

\medskip 
\section{Proofs of the main theorems}\label{sec-proof}
With the above energy estimates in Proposition \ref{Prop_va} and weighted estimates in Proposition \ref{XX-prop} and \ref{Prop_Psia} at hand, we now turn our attention to the main theorems.  First, we prove the bootstrap Proposition \ref{Main_Prop}, then we obtain the global solution of \eqref{sys-2}-\eqref{constraints-re} with small data, and establish the scatterings \eqref{Scatter-Gre} and \eqref{scattering}. Second, we prove a lemma in order to bound the Schr\"odinger map $\phi$. This lemma together with Theorem \ref{Ori_thm} enable us obtain the main Theorem \ref{Ori_thm0}.

\subsection{Proof of Proposition \ref{Main_Prop} and Theorem \ref{Ori_thm}}

\begin{proof}[Proof of Proposition \ref{Main_Prop}]
\emph{i) Improved bounds \eqref{Main_Prop_result1}.}

From Proposition \ref{Prop_va}, \eqref{Main-ini}, $\ep_1=C_2C_1\ep_0$ and $\sqrt{\ep_0}\leq \de$, we obtain
\begin{align*}
    E_N(t)&\leq E_N(0)+ \int_0^t C_E\ep_1^3 \<s\>^{-1+2\de}\ ds\leq (\ep_1/C_2C_1)^2+C_E \ep_1^3 \de^{-1} \<t\>^{2\de}\leq \ep_1^2\<t\>^{2\de}/4,
\end{align*}
and
\begin{align*}
    E_{N-2}(t)&\leq E_{N-2}(0)+\int_0^t C_E \ep_1^3\<s\>^{-4/3+2\de}\ ds\leq \ep_0^2+5C_E\ep_1^3\leq  \ep_1^2/4\,,
\end{align*}
where $C_1,\ C_E\geq 1$ are fixed constants given in Lemma \ref{psi-by_phi-Lemma} and Proposition \ref{Prop_va}, $C_2\geq 3$ is an universal constant, and the $\ep_0>0$ is chosen to be $\ep_0\leq \min\{(8C_EC_2^3C_1^3)^{-2},\ (40C_EC_2^3C_1^3)^{-1}\}$.
This completes the proof of \eqref{Main_Prop_result1}.

\emph{ii) Improved bounds \eqref{Main_Prop_result2}.}

By Proposition \ref{XX-prop} and \eqref{Main_Prop_Ass2}, for the case $k=N-2\geq 7$ we have
\begin{align*}
	X_{N-2}\leq  C_3 \ep_1 C_w^2\ep_1^2+C_3\ep_1^2\leq \big(C_3 C_w^2 \ep_1+C_3\big)\ep_1^2 \leq (C_w\ep_1/4)^2\,,
\end{align*}
and for the case $k=N$, we deduce that
\begin{align*}
	X_{N} \leq C_3 \ep_1 C_w^2\ep_1^2\<t\>^{2\de}+C_3\ep_1^2\<t\>^{2\de}\leq \big(C_3 C_w^2\ep_1 +C_3\big)\ep_1^2\<t\>^{2\de}\leq (C_w\ep_1/4)^2 \<t\>^{2\de}\,,
\end{align*}
where we choose $C_w$ and $\ep_1$ to be $C_w\geq 6\sqrt{C_3},\  C_3\ep_1<\frac{1}{32}$.
From Proposition \ref{Prop_Psia} and \eqref{Main_Prop_Ass1}, we easily obtain that
$Y^{1/2}_{N}\leq (2 +C_4\ep_1)\ep_1\<t\>^\de\leq C_w \ep_1\<t\>^\de/4$ and $Y^{1/2}_{N-2}\leq (2+C_4\ep_1)\ep_1\leq C_w \ep_1/4$,
where we should choose $C_w$ to be $C_w\geq 8+4C_4$.

To conclude, from the above process, we choose
$C_2\geq 3$, $C_w\geq \max\{6\sqrt{C_3},8+4C_4\}$, and
$\ep_0\leq \min\{(8C_EC_2^3C_1^3)^{-2},(40C_EC_2^3C_1^3)^{-1}, (32 C_3C_2C_1)^{-1}\}$,
the improved bounds \eqref{Main_Prop_result1} and \eqref{Main_Prop_result2} holds. We completes the proof of Proposition \ref{Main_Prop}.
\end{proof}

Next, we use the continuity method to prove the Theorem \ref{Ori_thm}.
\begin{proof}[Proof of Theorem \ref{Ori_thm}]
By Proposition \ref{Main_Prop} and continuity method, the global solution of system \eqref{sys-2}-\eqref{A,Ad+1} is easily obtained.
Here we focus on the scattering \eqref{Scatter-Gre}.

\emph{i) We prove the scattering \eqref{Scatter-Gre}.}

First, we rewrite the elastic equations as a hyperbolic system. Applying Leray projection $\P=I+\nab(-\De)^{-1}\nab$ and $|\nab|^{-1}\d_j $ to the first equation in \eqref{sys-VF}. Using the constraints $\nab\cdot Z^a G^T=0$ and \eqref{curl-vf}, we rewrite the term with respect to $\nab\cdot Z^a G$ as
\begin{align*}
    &-|\nab|^{-1}\d_j \P \d_k Z^a G_{ik}=-|\nab|^{-1}\d_j  \d_k Z^a G_{ik}\\
    =&\  -|\nab|^{-1} \d_k \d_k Z^a G_{ij}-|\nab|^{-1} \d_k\NN_{a,jik}
    =|\nab| Z^aG_{ij}-|\nab|^{-1} \d_k\NN_{a,jik}\,.
\end{align*}
We obtain
\begin{equation*}
	\left\{
	\begin{aligned}
		&\d_t |\nab|^{-1}\d_j Z^a u_i+|\nab| Z^a G_{ij}= |\nab|^{-1}\d_j\P f_{a,i}+|\nab|^{-1}\d_k N_{a,jik}\,,\\
		&\d_t Z^a G-\nab Z^a u =  g_a\,,
	\end{aligned}
	\right.
\end{equation*}
We complexify the unknowns $u$ and $G$ as
$\mathbf{G}^{(a)}_{ij}:= Z^a G_{ij}+i|\nab|^{-1}\d_j Z^a u_i$, then
\begin{align*}
    (\d_t +i|\nab|)\mathbf{G}^{(a)}_{ij}=g_{a,ij}+i|\nab|^{-1}\d_j\P f_{a,i}+i|\nab|^{-1}\d_k N_{a,jik}\,.
\end{align*}

Denote the profile of $\mathbf{G}^{(a)}$ as
$\GG^{(a)}_{ij}:=e^{it|\nab|}\mathbf{G}^{(a)}_{ij}$,
we get
\begin{align*}
    \GG^{(a)}_{ij}(t)=\GG^{(a)}_{ij}(0)+ \int_0^t e^{is|\nab|}\Big( g_{a,ij}+i|\nab|^{-1}\d_j\P f_{a,i}+i|\nab|^{-1}\d_k N_{a,jik}\Big) ds\,.
\end{align*}
We claim that
\begin{align}   \label{claim}
    \| f_a\|_{L^2}+\| g_a\|_{L^2}+\|\NN_a\|_{L^2}\les \ep_1^2\<t\>^{-3/2+2\de}\,.
\end{align}

Assume that the claim \eqref{claim} holds, then the flow $\GG^{(a)}(t)$ is convergent in $L^2$. Indeed, for any $t_1\leq t_2$, we have
\begin{align*}
    \|\GG^{(a)}(t_1)-\GG^{(a)}(t_2)\|_{L^2}
    \les \int_{t_1}^{t_2} \| f_a\|_{L^2}+\| g_a\|_{L^2}+\|\NN_a\|_{L^2}\ ds\les \ep_1^2 \<t_1\>^{-1/2+2\de}\rightarrow 0\,,\quad \text{as }t_1,t_2\rightarrow\infty\,.
\end{align*}
Then the flow $\GG^{(a)}(t)$ is convergent as $t$ goes to infinity, whose limit is denoted as $\GG^{(a)}_{\infty}$.
Note that the imaginary and real parts are also go to $0$ as $t\rightarrow\infty$, then the scattering \eqref{Scatter-Gre} is obtained.

Finally, we turn our attention to the proof of claim \eqref{claim}. From the expressions of $f_a,\ g_a$ and $\mathcal N_a$ in \eqref{fa}, \eqref{ga} and \eqref{Na}, it suffices to bound
\begin{align*}
	\sum_{|b+c|\leq N-2} \big(\|Z^b u\cdot\nab Z^c (u,G)\|_{L^2}+\|\d_j  (Z^b u_i+Z^b G_{ik}) Z^c G_{jk}\|_{L^2}+\|Z^b\psi \nab Z^c\psi\|_{L^2}\big)\,.
\end{align*}
The first two terms have been considered in \eqref{k1-1} and \eqref{k1-2}, which are bounded by $\ep_1^2\<t\>^{-3/2+2\de}$. 
Therefore, the claim \eqref{claim} is obtained. We complete the proof of the scattering \eqref{Scatter-Gre}.

\smallskip 
\emph{ii) We prove the scattering \eqref{scattering}.}

By the definition of profile $\Psi^{(a)}=e^{it\De} Z^a\psi$
we have
    \begin{align*}
        \Psi^{(a)}(t)=\Psi^{(a)}(0)+\int_0^t \d_s\Psi^{(a)}(s)ds
        =\Psi^{(a)}(0)-i\int_0^t e^{is\De} h_a(s) ds\,.
    \end{align*}
    We see the derivative $\nab$ as a vector field, then by \eqref{dtPsia} we obtain for any $|a|\leq N-4$
    \begin{align*}
        \sum_{|a|\leq N-4}\|\Psi^{(a)}(t_1)-\Psi^{(a)}(t_2)\|_{H^{-1}}\lesssim &\  \int_{t_1}^{t_2} \sum_{|a|\leq N-3}\|\d_s|\nab|^{-1}\Psi^{(a)}(s)\|_{L^2}ds\\
        \lesssim &\  \int_{t_1}^{t_2}\ep_1^2\<s\>^{-4/3+\de}ds\rightarrow 0,\quad {\rm as}\ t_1,t_2\rightarrow\infty.
    \end{align*}
This implies that the $\Psi^{(a)}(t)$ is convergent in $L^2$ as $t\rightarrow\infty$, and motivates us to define the function $\Psi^{(a)}_{\infty}$ as
    \begin{equation*}   
    	\Psi^{(a)}_{\infty}:=\Psi^{(a)}(0)-i\int_0^\infty e^{is\De} h_a(s) ds.
    \end{equation*}
    Then we have for any $|a|\leq N-4$,
\begin{equation} \label{sca-psi}
\begin{aligned}
        &\ \|Z^a\psi-e^{-it\De}\Psi^{(a)}_\infty\|_{H^{-1}}=\|e^{it\De}Z^a\psi-\Psi^{(a)}_\infty\|_{H^{-1}}\\
        \leq &\ \int_t^\infty \sum_{|a|\leq N-3}\|\d_s|\nab|^{-1}\Psi^{(a)}(s)\|_{L^2} ds
        \lesssim \ep_1^2 \<t\>^{-1/3+\de}\rightarrow 0\,,\quad {\rm as}\ t\rightarrow \infty\,.
    \end{aligned}
\end{equation}
    This shows that $Z^a\psi$ for $|a|\leq N-4$ scatters to the linear solution $e^{it\De}\Psi^{(a)}_\infty$ in the Sobolev space $H^{-1}$. We complete the proof of \eqref{scattering}.
\end{proof}

\subsection{Proof of Theorem \ref{Ori_thm0}}

In order to construct the Schr\"odinger flow $\phi$ from the solution $\psi$ of \eqref{sys-2}, we need the following lemma.
\begin{lemma}[Bounds for $\phi,\ v$ and $w$]  \label{phi-bypsi}
     Assume that $\psi$ is a solution of \eqref{sys-2}-\eqref{A,Ad+1}. If the differentiated fields $\psi$ has the additional property for any fixed time
\begin{align} \label{Ass-psi1}
&\|Z^N\psi(t)\|_{L^2}+\||\nab|^{-1}Z^N\psi(t)\|_{L^{2}}\les \ep_0\<t\>^\de\,,\\\label{Ass-psi2}
&\|Z^{N-2}\psi(t)\|_{L^2}+\||\nab|^{-1}Z^{N-2}\psi(t)\|_{L^{2}}\les \ep_0\,.
\end{align}
Then we have the bounds for the map $\phi$
\begin{align}    \label{Result-psi1}
&\|\nab Z^N\phi(t)\|_{L^2}\leq C_5\ep_0\<t\>^\de\,,\qquad \|\nab Z^{N-2}\phi(t)\|_{L^2}\leq C_5\ep_0\,,\\\label{Result-psi2}
&\|Z^N(\phi(t)-Q)\|_{L^{2}}\leq C_5\ep_0\<t\>^\de\,,\qquad \|Z^{N-2}(\phi(t)-Q)\|_{L^{2}}\leq C_5 \ep_0\,,  
\end{align}
Moreover, we have the decay estimates for Schr\"odinger map $\phi$ and the frame $(v,\ w)$
\begin{align}  \label{Dec_dphi}
    \|\nab Z^{N-1} \phi\|_{L^3}+\|\nab Z^{N-1} v\|_{L^3}+\|\nab Z^{N-1} w\|_{L^3}\les \ep_0 \<t\>^{-1/2+\de}\,.
\end{align}
\end{lemma}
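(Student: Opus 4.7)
This lemma is the inverse of Lemma~\ref{psi-by_phi-Lemma}: given bounds on $\psi$ and the Coulomb connection $A$ determined by \eqref{A,Ad+1}, I would reconstruct $(\phi,v,w)$ by integrating the overdetermined system \eqref{phi-v-w} with boundary data $\phi\to Q$, $v\to v_\infty$, $w\to w_\infty$ at spatial infinity (compatibility being ensured by the gauge relations \eqref{com-Dpsi}). The bounds are then obtained by running the induction of \eqref{bd-vw1}--\eqref{bd-vw} in reverse, with $\phi$ playing the role of the derived quantity rather than the primitive, supplemented by a duality argument to deduce the mass bound \eqref{Result-psi2}.

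\textbf{Plan for \eqref{Result-psi1}.} First I would apply $Z^a$ with $|a|\le N$ to $\d_m\phi = v\Re\psi_m + w\Im\psi_m$ via Leibniz and take $L^2$. Setting $y_j:=\|(\nab Z^jv,\nab Z^jw)\|_{L^2}$, the $b=0$ contribution is bounded by $\|Z^a\psi\|_{L^2}$ using $|v|=|w|=1$, while the remaining products $Z^bv\cdot Z^c\psi$ are controlled by Sobolev together with the inductive hypothesis on $y_{|a|-1}$. The $y_j$ are in turn estimated from the equations for $\d_mv,\d_mw$ in \eqref{phi-v-w}, together with the elliptic estimate $\|Z^jA\|_{L^2}\les \|Z^j\psi\|_{L^2}\|Z^{[j/2]}\psi\|_{L^3}$ derived from \eqref{A,Ad+1}, closing the loop exactly as in \eqref{bd-vw1}--\eqref{bd-vw}. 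The hypotheses \eqref{Ass-psi1}--\eqref{Ass-psi2} then yield $\|\nab Z^N\phi\|_{L^2}+y_N\les \ep_0\<t\>^\de$ and the analogous $\les \ep_0$ bound at level $N-2$.

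\textbf{Plan for \eqref{Result-psi2}, the main obstacle.} Recovering $Z^a(\phi-Q)$ in $L^2$ requires inverting a gradient, which is delicate because the frame is nonlinear in $\psi$, so one cannot simply pull $v,w$ outside $|\nab|^{-1}$. My plan is to use the Plancherel identity $\|f\|_{L^2}\sim \||\nab|^{-1}\nab f\|_{L^2}$ (valid componentwise for functions vanishing at infinity) to reduce to estimating $\||\nab|^{-1}\nab Z^a(\phi-Q)\|_{L^2}$. Expanding by Leibniz, the leading piece is $|\nab|^{-1}(v\cdot Z^a\psi)$. I would split $v=v_\infty+(v-v_\infty)$: the constant part gives $v_\infty\cdot |\nab|^{-1}Z^a\psi$, whose $L^2$ norm is precisely controlled by hypotheses \eqref{Ass-psi1}--\eqref{Ass-psi2}, while the error is estimated by duality,
\begin{equation*}
\|(v-v_\infty)Z^a\psi\|_{\dot H^{-1}}\les \||\nab|^{-1}Z^a\psi\|_{L^2}\bigl(\|\nab v\|_{L^3}+\|v-v_\infty\|_{\lf}\bigr),
\end{equation*}
with both factors in parentheses $O(\ep_0)$ via Step~1 and Sobolev interpolation. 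For the remaining Leibniz terms ($|b|\ge 1$) I would use the Hardy--Littlewood--Sobolev embedding $|\nab|^{-1}\colon L^{6/5}\to L^2$ combined with H\"older, distributing derivatives so that no $L^\infty$ loss occurs. An identical argument handles the $w$ pieces.

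\textbf{Plan for \eqref{Dec_dphi}.} Using Leibniz on $\d_m\phi$ and on $\d_mv,\d_mw$ together with $|\phi|=|v|=|w|=1$, the dominant term in $L^3$ is $\|Z^{N-1}\psi\|_{L^3}$, the lower-order products being controlled by the bounds from the previous steps. Combining the commutator identity $S|\nab|^{-1}=|\nab|^{-1}\tilde S$ with the Hayashi--Naumkin estimate \eqref{df-decay} applied to $|\nab|^{-1}Z^{N-1}\psi$ yields $\|Z^{N-1}\psi\|_{L^6}\les \ep_0\<t\>^{-1+\de}$ under \eqref{Ass-psi1}. Interpolating with the $L^2$ bound of \eqref{Ass-psi1} produces $\|Z^{N-1}\psi\|_{L^3}\les \ep_0\<t\>^{-1/2+\de}$, from which the three $L^3$ decay bounds follow.
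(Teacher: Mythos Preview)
Your plan for \eqref{Result-psi1} and \eqref{Dec_dphi} matches the paper's proof essentially line for line: both run the closed estimate for $(\nabla Z^j\phi,\nabla Z^jv,\nabla Z^jw)$ via the frame relations \eqref{phi-v-w} together with the elliptic bound $\|Z^jA\|_{L^2}\les\ep_0\|Z^j\psi\|_{L^2}$, and both interpolate the $L^6$ decay of $\psi$ against its $L^2$ bound to get the $L^3$ rate in \eqref{Dec_dphi}.

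For \eqref{Result-psi2} your $b=0$ treatment is a genuine (and correct) alternative. The paper never splits $v=v_\infty+(v-v_\infty)$; instead it writes $Z^c\psi_m=\nabla\cdot(\nabla\De^{-1}Z^c\psi_m)$ and integrates by parts inside $\De^{-1}\d_m(Z^bv\,Z^c\psi_m)$, producing a main term $Z^bv\cdot\nabla\De^{-1}Z^c\psi_m$ and a commutator $\nabla Z^bv\cdot\nabla\De^{-1}Z^c\psi_m$. Your duality estimate
\[
\|(v-v_\infty)Z^a\psi\|_{\dot H^{-1}}\les\||\nab|^{-1}Z^a\psi\|_{L^2}\bigl(\|v-v_\infty\|_{L^\infty}+\|\nab v\|_{L^3}\bigr)
\]
is the same derivative shift in disguise, so for $b=0$ the two routes are equivalent.

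There is, however, a gap in your $|b|\ge1$ step. Your stated plan---apply HLS $|\nab|^{-1}\colon L^{6/5}\to L^2$ directly to $Z^bv\cdot Z^c\psi$---fails at the top order. When $|b|$ is close to $N$ (so $|c|$ is small) the only control on $Z^bv$ is $\|\nab Z^bv\|_{L^2}$, hence $Z^bv\in L^6$ and nothing better; on the other side $Z^c\psi\in L^2\cap L^6$ but \emph{not} $L^{3/2}$. No H\"older pairing lands in $L^{6/5}$. Extending your duality argument does not help either: it requires the multiplier bound $\|Z^bv\|_{L^\infty}+\|\nab Z^bv\|_{L^3}$, which costs two extra $Z$-derivatives and is out of range for $|b|\ge N-1$. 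The paper's Leibniz shift is exactly what rescues this case: after moving a derivative onto $v$ one pairs $\|Z^bv\|_{L^6}$ with $\||\nab|^{-1}Z^c\psi\|_{L^3}\les\||\nab|^{-1}Z^c\psi\|_{L^2}^{1/2}\|Z^c\psi\|_{L^2}^{1/2}$, and pairs $\|\nab Z^bv\|_{L^2}$ with the same $L^3$ norm in the commutator. Your phrase ``distributing derivatives'' may have been gesturing at this, but as written the mechanism is missing.
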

\begin{proof}
     \emph{i) We prove the bounds \eqref{Result-psi1}.}

From the formula \eqref{Ell-A}, Sobolev embeddings and \eqref{Ass-psi2}, for $j=N-2$ or $N$ we have
\begin{align}   \label{ZA}
	\|Z^j A\|_{L^2}\les \||\nab|^{-1}(Z^j\psi Z^{[j/2]}\psi)\|_{L^2}\les \|Z^j\psi\|_{L^2} \|Z^{[j/2]}\psi\|_{L^3}\les \ep_0\|Z^j\psi\|_{L^2}\,.
\end{align}
Applying the vector fields $Z$ to the formulas \eqref{phi-v-w} yields
     \begin{equation}   \label{phi-v-wVF}
     	\left\{ \begin{aligned}
     		&\d_m Z^a\phi=\sum_{b+c=a}(Z^bv\Re Z^c\psi_m+Z^bw\Im Z^c\psi_m)\,,\\
     		&\d_m Z^av=\sum_{b+c=a}(-Z^b\phi\Re Z^c\psi_m+Z^bw Z^cA_m)\,,\\
     		&\d_m Z^aw=\sum_{b+c=a}(-Z^b\phi\Im Z^c\psi_m- Z^b v Z^cA_m)\,.
     	\end{aligned}
     	\right.
     \end{equation}
Then by \eqref{phi-v-wVF}, \eqref{ZA}, \eqref{Ass-psi2} and Sobolev embedding we arrive at
 \begin{equation}\label{dZphi}
 	\begin{aligned}   
 		\|\nab Z^j(\phi,v,w) \|_{L^2}&\lesssim \| Z^j(\psi,A) \|_{L^2}\|(Z^{[j/2]}\phi, Z^{[j/2]} v, Z^{[j/2]} w)\|_{L^\infty}\\
 		&\quad +\|(Z^{[j/2]}\psi,Z^{[j/2]}A)\|_{L^3}\sum_{[j/2]<|c|\leq j}\|(Z^c\phi,Z^cv,Z^cw)\|_{L^6}\\
 		&\lesssim  (1+\ep_0)\|Z^j\psi\|_{L^2}(1+\|\nab Z^{[j/2]+1}(\phi,v,w) \|_{L^2})\\
 		&\quad +\ep_0 \|\nab Z^{j}(\phi,v,w) \|_{L^2}\,.
 	\end{aligned}
 \end{equation}
When $j=N-2$, from \eqref{Ass-psi2}, the estimate \eqref{dZphi} implies
\begin{align*}
    \|\nab Z^{N-2}(\phi,v,w) \|_{L^2}\les (1+\ep_0)\ep_0 (1+\|\nab Z^{N-2}(\phi,v,w) \|_{L^2})+\ep_0 \|\nab Z^{N-2}(\phi,v,w) \|_{L^2}\,,
\end{align*}
which also gives 
\begin{align} \label{phivw-Bd}
    \|\nab Z^{N-2}(\phi,v,w) \|_{L^2}\les \ep_0\,.
\end{align}
When $j=N$, from \eqref{Ass-psi1} and \eqref{phivw-Bd}, the estimate \eqref{dZphi} gives
\begin{align*}
    \|\nab Z^{N}(\phi,v,w) \|_{L^2}\les (1+\ep_0)\ep_0\<t\>^\de (1+\ep_0)+\ep_0\|\nab Z^{N}(\phi,v,w) \|_{L^2}\,,
\end{align*}
which also yields
\begin{align} \label{phivw-BdHi}
    \|\nab Z^{N}(\phi,v,w) \|_{L^2}\les \ep_0\<t\>^\de\,.
\end{align}

\emph{ii) We prove the bound \eqref{Result-psi2}.}

The $Z^a(\phi-Q)$ can be expressed using the first formula in \eqref{phi-v-wVF}. Since $Z^b v$ and $Z^b w$ are estimated similarly, here we only consider the first term $Z^b v\Re Z^c\psi_m$. By Leibniz rule, it suffices to bound
\begin{align*}
    &\ \sum_{|b+c|= |a|}\|\De^{-1}\d(Z^b v  Z^c\psi)\|_{L^2}
    \les \sum_{|b+c|= |a|}\|\De^{-1}\nab\d(Z^b v |\nab|^{-1} Z^c\psi+\De^{-1}\d(\nab Z^b v |\nab|^{-1} Z^c\psi)\|_{L^2}\\
    \les&\  \sum_{|b+c|= |a|}\||Z^b v |\nab|^{-1}Z^c\psi||\|_{L^2}
     +\sum_{|b+c|= |a|}\||\nab Z^b v |\nab|^{-1}Z^c\psi|\|_{L^{6/5}}\,.
\end{align*}
By $|v|=|w|=1$, \eqref{phivw-Bd} and \eqref{Ass-psi2}, we get
\begin{align*}
    \sum_{|b+c|\leq j}\||Z^b v |\nab|^{-1}Z^c\psi|\|_{L^2}
    &\les \||Z^{[j/2]}v||\|_{\lf} \||\nab|^{-1}Z^j\psi\|_{L^2}+\sum_{[j/2]<|b|\leq j}\||Z^b v|\|_{L^6} \||\nab|^{-1}Z^{[j/2]}\psi\|_{L^3}\\
    &\les (1+\ep_0)\||\nab|^{-1}Z^j\psi\|_{L^2}+\|\nab Z^{j}v \|_{L^2}\ep_0\,.
\end{align*}
By Sobolev embedding, \eqref{phivw-Bd} and \eqref{Ass-psi2}, we bound the second term by
\begin{align*}
    \sum_{|b+c|\leq j}\||\nab Z^b v |\nab|^{-1}Z^c\psi|\|_{L^{6/5}}
    \les&\ \||\nab Z^{[j/2]}v|\|_{L^3} \||\nab|^{-1}Z^j\psi\|_{L^2}
    +\||\nab Z^j v|\|_{L^2} \||\nab|^{-1}Z^{[j/2]}\psi\|_{L^3}\\
    \les&\  \ep_0\||\nab|^{-1}Z^j\psi\|_{L^2}+\||\nab Z^j v|\|_{L^2}\ep_0.
\end{align*}
From the above three estimates, for $|a|\leq N-2$, from \eqref{Ass-psi2} and \eqref{phivw-Bd} we have
\begin{align*}
    &\|Z^{N-2}(\phi-Q)\|_{L^2}\les (1+\ep_0)\||\nab|^{-1}Z^{N-2}\psi\|_{L^2}+\|\nab Z^{N-2}(v,w) \|_{L^2}\ep_0\les (1+\ep_0)\ep_0+\ep_0^2\les \ep_0\,,
\end{align*}
and for $|a|\leq N$, from \eqref{Ass-psi1} and \eqref{phivw-BdHi} we obtain
\begin{align*}
    &\|Z^{N}(\phi-Q)\|_{L^2}\les (1+\ep_0)\||\nab|^{-1}Z^{N}\psi\|_{L^2}+\|\nab Z^{N}(v,w) \|_{L^2}\ep_0\les (1+\ep_0)\ep_0\<t\>^\de+\ep_0\<t\>^\de \ep_0\les \ep_0\<t\>^\de\,.
\end{align*}
Therefore, the estimates \eqref{Result-psi2} are obtained.

\emph{iii) We prove the bound \eqref{Dec_dphi}.}

By the first formula in \eqref{phi-v-wVF}, we have
\begin{align*}
    \|\nab Z^{N-1}\phi\|_{L^3}&\les \sum_{|b+c|\leq N-1} \|(|Z^b v|+|Z^b w|)Z^c\psi\|_{L^3}\\
    &\les \||Z^{[(N-1)/2]} v|+|Z^{[(N-1)/2]} w|\|_{\lf}\|Z^{N-1}\psi\|_{L^3}\\
    &\quad + \sum_{[(N-1)/2]<|b|\leq N-1}\||Z^{b} v|+|Z^{b} w|\|_{L^6}\|Z^{[(N-1)/2]}\psi\|_{L^6}\\
    &\les \big(1+\||\nab Z^{[(N-1)/2]+1} v|+|\nab Z^{[(N-1)/2]+1} w|\|_{L^2}\big)\|Z^{N-1}\psi\|_{L^2}^{1/2}\|Z^{N-1}\psi\|_{L^6}^{1/2}\\
    &\quad + \||\nab Z^{N-1} v|+|\nab Z^{N-1} w|\|_{L^2}\|Z^{[(N-1)/2]}\psi\|_{L^6}\,.
\end{align*}
Using \eqref{phivw-Bd}, \eqref{Dec_psi}, \eqref{phivw-BdHi} and \eqref{Dec_psi}, we can further bound the above terms by
\begin{align*}
    \|\nab Z^{N-1}\phi\|_{L^3}&\les (1+\ep_0 )\ep_0 \<t\>^{\de/2} \<t\>^{-1/2+\de/2}+\ep_0\<t\>^\de \ep_0 \<t\>^{-1} 
    \les \ep_0 \<t\>^{-1/2+\de}\,.
\end{align*}

From the second formula in \eqref{phi-v-wVF}, for $ |a|\leq N-1$, by Sobolev embedding and interpolation we have
\begin{align*}
	\|\nab Z^a v\|_{L^3}\les& \sum_{b+c=a}\|Z^b\phi Z^c\psi+Z^b w Z^c A\|_{L^3}\\
	\les &\ \|Z^{[|a|/2]}\phi\|_{\lf}\|Z^{|a|}\psi\|_{L^3}+\sum_{[|a|/2]<|b|\leq |a|}\|Z^b\phi\|_{L^6}\|Z^{[|a|/2]}\psi\|_{L^6}\\
	&\ +\|Z^{[|a|/2]}w\|_{\lf}\|Z^{|a|}A\|_{L^3}+\sum_{[|a|/2]<|b|\leq |a|}\|Z^b w\|_{L^6}\|Z^{[|a|/2]}A\|_{L^6}\\
	\les &\ (1+\|\nab Z^{[N/2]+1}(\phi,w)\|_{L^2})(\|Z^{N-1}\psi\|_{L^3}+\|Z^N A\|_{L^2})\\
	&\ +\|\nab Z^{N-1}(\phi,w)\|_{L^2} (\|Z^{N-3}\psi\|_{L^6}+\|Z^{N-3}A\|_{L^2}^{1/3}\|Z^{N-3}A\|_{\lf}^{2/3})\,.
\end{align*}
By \eqref{phivw-Bd}, \eqref{Dec_psi}, \eqref{Dec_d1/2-A}, \eqref{phivw-BdHi}, \eqref{Dec_psi} and \eqref{Dec_dA}, we bound the above by
\begin{align*}
	\|\nab Z^a v\|_{L^3}\les &\ (1+\ep_0)(\ep_1\<t\>^{-1/2+\de}+\ep_1^2\<t\>^{-1/2+\de})+\ep_0\<t\>^\de(\ep_1\<t\>^{-1}+\ep_1^2\<t\>^{-3/2+2\de})
	\les \ep_0\<t\>^{-1/2+\de}\,.
\end{align*} 
Similarly, we also bound the $\|\nab Z^a w\|_{L^3}$ with $|a|\leq N-1$ by $\ep_0\<t\>^{-1/2+\de}$. Thus the bound \eqref{Dec_dphi} follows. We complete the proof of the lemma. 
\end{proof}

Finally, we can use the Theorem \ref{Ori_thm} to prove our main Theorem \ref{Ori_thm0}.
\begin{proof}[Proof of Theorem \ref{Ori_thm0}]

\emph{i) Global existence of \eqref{ori_sys}.}

From the assumption \eqref{Main-ini0} and local well-posedness of \eqref{ori_sys} in Theorem \ref{main-Thm}, we obtain the unique local solution $(u,F,\phi)\in H^3\times H^3\times H^4_Q$ on some time interval $[0,T]$. 
We then extend the local solution by bootstrap argument and continuity method. Assume that there exists maximal time $T_0$ such that for any $0\leq t\leq T_0$
\begin{align}  \label{BTAss0}
&\sum_{|a|\leq N}\big(\|Z^a (u,F-I)\|_{L^2}+\|  Z^a (\phi-Q)\|_{L^2} +\|\nab Z^a \phi\|_{L^2}\big)\leq C_0\ep_0\<t\>^\de\,,\\ \label{BTAss1}
&\sum_{|a|\leq N-2}\big(\|Z^a (u,F-I)\|_{L^2}+\|  Z^a (\phi-Q)\|_{L^2} +\|\nab Z^a \phi\|_{L^2}\big)\leq C_0\ep_0\,.
\end{align}
Then under the Coulomb gauge, the system \eqref{ori_sys} on $[0,T_0]$ is rewritten as \eqref{sys-2}. We can also obtain the associated solution $(u,G,\psi)$ on $[0,T_0]$ from $(u,F,\phi)$ with initial data $(u_0,G_0,\psi_0)$ satisfying \eqref{Main-ini} by Lemma \ref{psi-by_phi-Lemma}. 
By Theorem \ref{Ori_thm}, the system \eqref{sys-2}-\eqref{constraints-re} admits a unique global solution satisfying
\begin{align*}
    E^{1/2}_N(u,G,\psi;t)\leq C_2C_1\ep_0\<t\>^\de\,,\qquad  E^{1/2}_{N-2}(u,G,\psi;t)\leq C_2C_1\ep_0\,.
\end{align*}
Using the Lemma \ref{phi-bypsi}, we can bound the energy of \eqref{ori_sys} again
\begin{align*}
    \sum_{|a|\leq N}\big(\|Z^a (u,F-I)\|_{L^2}+\|  Z^a (\phi-Q)\|_{L^2} +\|\nab Z^a \phi\|_{L^2}\big)\leq C_5\ep_0\<t\>^\de\leq \frac{C_0}{2}\ep_0\<t\>^\de,\\
    \sum_{|a|\leq N-2}\big(\|Z^a (u,F-I)\|_{L^2}+\|  Z^a (\phi-Q)\|_{L^2} +\|\nab Z^a \phi\|_{L^2}\big)\leq C_5\ep_0\leq \frac{C_0}{2}\ep_0,
\end{align*}
Where we choose $C_0$ to be $C_0\geq 2C_5$.
Then by local existence, the solution of \eqref{ori_sys} can be extended forward, and the energy bounds \eqref{BTAss0} and \eqref{BTAss1} also hold on the larger time interval $[0,T_0+\ep]$ for some $\ep>0$. This contracts to the assumption on $T_0$. Hence, the system \eqref{ori_sys} admits global solution with energy bounds \eqref{energybd0}.

\emph{ii) We prove the scattering \eqref{scattering0}.}

From the first formula in \eqref{phi-v-wVF}, we express the $Z^a(\phi-Q)$ for any $0\leq |a|\leq N$ as 
\begin{align}  \label{Form-phi}
	Z^a(\phi-Q)=\De^{-1}\d_m(\sum_{b+c=a}Z^b v \Re Z^c\psi_m+Z^b w \Im Z^c\psi_m)\,.
\end{align}
We claim that for any $|a|\leq N-4$, there holds
\begin{equation}  \label{Claim-scatt}
	\lim_{t\rightarrow\infty}\|Z^a(\phi-Q)-\Re e^{-it\De}(v_\infty \De^{-1}\d_m  \Psi^{(a)}_{\infty,m})-\Im e^{-it\De}(w_\infty \De^{-1}\d_m \Psi^{(a)}_{\infty,m})\|_{H^1}=0\,.
\end{equation}
Then the scattering \eqref{scattering0} follows, where the $\Phi_{1,a}$ and $\Phi_{2,a}$ are chosen to be
$\Phi_{1,a}:=v_\infty \De^{-1}\d_m  \Psi^{(a)}_{\infty,m}$, $\Phi_{2,a}:=w_\infty \De^{-1}\d_m \Psi^{(a)}_{\infty,m}$. In the remaining part, we focus on the proof of \eqref{Claim-scatt}.

\emph{a) We prove the claim \eqref{Claim-scatt} in energy space $\dot{H}^1$.}

By the formula \eqref{Form-phi}, we rewrite the left hand side of \eqref{Claim-scatt} as 
\begin{align*}
	&Z^a(\phi-Q)-\Re e^{-it\De}(v_\infty \De^{-1}\d_m  \Psi^{(a)}_{\infty,m})-\Im e^{-it\De}(w_\infty \De^{-1}\d_m \Psi^{(a)}_{\infty,m})\\
	\leq  &\ |\De^{-1}\d_m(\sum_{b+c=a}Z^b v \Re Z^c\psi_m)-\Re \De^{-1}\d_m(v_\infty  e^{-it\De} \Psi^{(a)}_{\infty,m})|\\
	&\ +|\De^{-1}\d_m(\sum_{b+c=a}Z^b w \Im Z^c\psi_m)-\Im \De^{-1}\d_m(w_\infty  e^{-it\De} \Psi^{(a)}_{\infty,m})|
	:= I_1+I_2\,.
\end{align*}
For the first term, by $v_{\infty}=\lim_{|x|\rightarrow \infty}v(t,x)$ we divide the $I_1$ into
\begin{align}   \label{S1}
	I_1\leq &\ \sum_{b+c=a;|b|\geq 1}\big|\De^{-1}\d_m(Z^b v Z^c\psi_m)\big|+\big|\De^{-1}\d_m( (v-v_\infty) Z^a\psi_m)\big|\\\nonumber
	&\ +\big|\De^{-1}\d_m(v_\infty (Z^a\psi_m- e^{-it\De} \Psi^{(a)}_{\infty,m}))\big|\,,
\end{align}
then by H\"older's inequality, in $\dot{H}^1$ we have
\begin{align*}
	\|I_1\|_{\dot{H}^1}\les &\ \sum_{b+c=a;|b|\geq 1}\|Z^b v  Z^c\psi\|_{L^2}+ \|(v-v_\infty) Z^a\psi\|_{L^2}+\|Z^a\psi_m-e^{-it\De} \Psi^{(a)}_{\infty,m}\|_{L^2}\\
	\les  &\ \sum_{b+c=a;|b|\geq 1} \|Z^b v\|_{L^6}\|Z^c\psi\|_{L^3}+ \|v-v_\infty\|_{\lf}\|Z^a\psi\|_{L^2}+\|Z^a\psi_m-e^{-it\De} \Psi^{(a)}_{\infty,m}\|_{L^2}\,.
\end{align*}
Using \eqref{Dec_dphi}, \eqref{Dec_psi} \eqref{v-cong} and \eqref{scattering}, for $|a|\leq N-2$, the $I_1$ can be controlled by
\begin{align*}
	I_1
	\les  \ep_1 \ep_1\<t\>^{-1/2}+ \ep_1\<t\>^{-1/2+\de}\ep_1+\ep_1^2\<t\>^{-1/3+\de}
	\les \ep_1^2\<t\>^{-1/3+\de}\longrightarrow 0\,,\qquad  \text{as }t\rightarrow \infty\,.
\end{align*}
In a same way, the second term $II_2$ also converges to $0$ as $t\rightarrow\infty$. Hence, the claim \eqref{Claim-scatt} in $\dot{H}^1$ is obtained.

\emph{b) We prove the claim \eqref{Claim-scatt} in mass space $L^2$.}

In a similar way as above $a)$, it suffices to bound $I_1$ and $I_2$ in $L^2$. For the first term in \eqref{S1}, by Leibniz rule, Sobolev embedding, \eqref{Dec_dphi} and \eqref{Dec_psi}, we have 
\begin{align*}
	&\ \sum_{b+c=a;|b|\geq 1}\|\De^{-1}\d_m(Z^b v  Z^c\psi_m)\|_{L^2}\\
	\les&\ \sum_{b+c=a;|b|\geq 1}\|\De^{-1}\d_m\nab(Z^b v  |\nab|^{-1}Z^c\psi)+\De^{-1}\d_m(\nab Z^b v  |\nab|^{-1}Z^c\psi)\|_{L^2}\\
	\les&\ \sum_{b+c=a;|b|\geq 1}\|Z^b v\|_{\lf}  \||\nab|^{-1}Z^c\psi\|_{L^2}+\|\nab Z^b v\|_{L^3}\||\nab|^{-1}Z^c\psi\|_{L^2}
	\les \ep_1^2\<t\>^{-1/2+2\de}\,.
\end{align*}
Using Leibniz rule to rewrite the second term in \eqref{S1}, then by \eqref{v-cong} we have
\begin{align*}
	&\ \|\De^{-1}\d_m(( v-v_\infty)  Z^a\psi_m)\|_{L^2}\\
	\les&\ \big\||\De^{-1}\d_m \nab\big(( v-v_\infty) |\nab|^{-1}Z^a\psi_m\big)|+|\De^{-1}\d_m\big(\nab( v-v_\infty) |\nab|^{-1}Z^a\psi_m\big)|\big\|_{L^2}\\
	\les &\ \big\|( v-v_\infty)  |\nab|^{-1}Z^a\psi\big\|_{L^2}+\big\|\nab ( v-v_\infty)  |\nab|^{-1}Z^a\psi\big\|_{L^{6/5}}\\
	\les &\ \| v-v_\infty\|_{\lf}\|  |\nab|^{-1}Z^a\psi\big\|_{L^2}+\|\nab ( v-v_\infty)\|_{L^3}\|  |\nab|^{-1}Z^a\psi\|_{L^2}\\
	\les &\ \ep_1^2\<t\>^{-1/2+\de}\,.
\end{align*}
Since $v_\infty$ is a fixed vector, by the estimate \eqref{sca-psi} we bound the last term in \eqref{S1} by
\begin{align*}
\|\De^{-1}\d_m(v_\infty (Z^a\psi_m- e^{-it\De} \Psi^{(a)}_{\infty,m}))\|_{L^2}
\les \||\nab|^{-1}Z^a\psi-e^{-it\De}|\nab|^{-1}\Psi^{(a)}_{\infty,m}\|_{L^2}\les \ep_1^2\<t\>^{-1/3+\de}.
\end{align*}
Then, the $L^2$-norm of $I_1$ is bounded by $\ep_1^2\<t\>^{-1/3+\de}$, and goes to $0$ as $t\rightarrow\infty$. We can also obtain a similar estimate for the term $I_2$ in $L^2$. Hence, the claim \eqref{Claim-scatt} in $L^2$ follows.

This completes the proof of scattering \eqref{scattering0}.
\end{proof}

\medskip
\section*{Acknowledgment}
X. Hao is supported by the CAEP Foundation (Grant No:CX20210020). J. Huang is supported by National Natural Science Foundation of China (Grant No. 12301293) and Beijing Institute of Technology Research Fund Program for Young Scholars. The author N. Jiang is supported by the grants from the National Natural Foundation of China under contract Nos. 11971360 and 11731008, and also supported by the Strategic Priority Research Program of Chinese Academy of Sciences, Grant No. XDA25010404. L. Zhao is supported by NSFC Grant of China No. 12271497,  No. 12341102 and the National Key Research and Development Program of China No. 2020YFA0713100.

\end{document}